\newtheorem{thm}{Theorem}[section]
\newtheorem{cor}[thm]{Corollary}
\newtheorem{lem}[thm]{Lemma}
\newtheorem{prp}[thm]{Proposition}
\newtheorem{conj}[thm]{Conjecture}
\theoremstyle{definition}
\newtheorem{dfn}[thm]{Definition}
\newtheorem{ntn}[thm]{Notation}
\theoremstyle{remark}
\newtheorem{rmk}[thm]{Remark}
\newtheorem{example}[thm]{Example}
\newtheorem{examples}[thm]{Examples}
\numberwithin{equation}{section}
\newcommand{\CC}{\mathbb{C}}
\newcommand{\NN}{\mathbb{N}}
\newcommand{\TT}{\mathbb{T}}
\newcommand{\ZZ}{\mathbb{Z}}
\newcommand{\RR}{\mathbb{R}}
\newcommand{\Ff}{\mathcal{F}}
\newcommand{\Gg}{\mathcal{G}}
\newcommand{\Hh}{\mathcal{H}}
\newcommand{\Kk}{\mathcal{K}}
\newcommand{\Ll}{\mathcal{L}}
\newcommand{\Mm}{\mathcal{M}}
\newcommand{\Oo}{\mathcal{O}}
\newcommand{\Pp}{\mathcal{P}}
\newcommand{\Qq}{\mathcal{Q}}
\newcommand{\Tt}{\mathcal{T}}
\newcommand{\Uu}{\mathcal{U}}
\newcommand{\Ad}{\operatorname{Ad}}
\newcommand{\clsp}{\overline{\lsp}}
\newcommand{\coker}{\operatorname{coker}}
\newcommand{\id}{\operatorname{id}}
\newcommand{\lsp}{\operatorname{span}}
\newcommand{\Cliff}[1]{\operatorname{\mbox{$\CC\!\operatorname{liff}$}}_{#1}}
\newcommand{\IdKK}[1]{[\id_{#1}]}
\newcommand{\hatimes}{\mathbin{\widehat{\otimes}}}
\newcommand{\Kgr}{K^{\operatorname{gr}}}
\title{Graded $C^*$-algebras, graded $K$-theory, and twisted $P$-graph $C^*$-algebras}
\author{Alex Kumjian}
\address[A. Kumjian]{Department of Mathematics (084)\\ University
of Nevada\\ Reno NV 89557-0084\\ USA} \email{alex@unr.edu}
\author{David Pask}
\address[D.Pask and A. Sims]{School of Mathematics and Applied Statistics\\
University of Wollongong\\
NSW  2522\\
AUSTRALIA}
\email{dpask, asims@uow.edu.au}
\author{Aidan Sims}
\keywords{$KK$-theory; graded $K$-theory; $C^*$-algebra; $P$-graph; twisted $C^*$-algebra; graded $C^*$-algebra}
\subjclass[2010]{Primary 46L05, 19K35}
\thanks{This research was supported by the Australian Research Council, grant DP150101598.
Part of this work was carried out while the third-named author was at the Centre de Recerca Mathematica, Universitat
Aut\'onoma de Barcelona as part of the Intensive Research Program \emph{Operator algebras: dynamics and
interactions} in 2017.
The first author  would like to thank his coauthors for their hospitality and support while visiting the
University of Wollongong where much of this work was done.
He would also like to acknowledge support from Simons Foundation Collaboration grant \#353626.}
\date{\today}
\begin{document}

\begin{abstract}
We develop methods for computing graded $K$-theory of $C^*$-algebras as defined in terms
of Kasparov theory. We establish graded versions of Pimsner's six-term sequences for
graded Hilbert bimodules whose left action is injective and by compacts, and a graded
Pimsner--Voiculescu sequence. We introduce the notion of a twisted $P$-graph
$C^*$-algebra and establish connections with graded $C^*$-algebras. Specifically, we show
how a functor from a $P$-graph into the group of order two determines a grading of the
associated $C^*$-algebra. We apply our graded version of Pimsner's exact sequence to
compute the graded $K$-theory of a graph $C^*$-algebra carrying such a grading.
\end{abstract}

\maketitle

\section{Introduction}
This paper has two objectives. The first is to develop techniques for computing graded
$K$-theory of $C^*$-algebras as defined in terms of Kasparov theory, with a view to
expanding on Haag's computation of graded $K$-theory of Cuntz algebras \cite{H1, H2}. The
second is to introduce twisted $P$-graph algebras, generalising \cite{BSV, CKSS, RenWil},
and use them to study connections between $\ZZ_2$-gradings of $C^*$-algebras, and the
twisted $k$-graph $C^*$-algebras studied in \cite{KPS3, KPS4}. The idea is that
$\ZZ_2$-valued functors on $P$-graphs determine gradings of the associated
$C^*$-algebras. The twisted $C^*$-algebras associated to $\{-1,1\}$-valued $2$-cocycles
on cartesian products of $P$-graphs can then be used to model graded tensor products of
graded $C^*$-algebras.

We begin by discussing graded $K$-theory for $C^*$-algebras. Although $K$-theory for
graded Banach algebras has been extensively studied by Karoubi (see, for example,
\cite{Karoubi}), the modern literature on complex graded $C^*$-algebras essentially
begins with the work of Kasparov \cite{K1,K2} on $KK$-theory. Various definitions of
graded $K$-theory for graded $C^*$-algebras have been used in the literature (see
\cite{vD1, vD2, GH, H1, H2, Roe, S} to name but a few). We take as our definition of
$\Kgr_0(A)$ the Kasparov group $KK(\CC, A)$ for the graded $C^*$-algebra $A$, and
likewise define $\Kgr_1(A) := KK(\CC, A \hatimes \Cliff1)$ where $\Cliff1$ is the first
complex Clifford algebra. We establish that perturbing the grading of a $C^*$-algebra $A$
by conjugation by an odd self-adjoint unitary in $\Mm(A)$ does not alter the graded
$K$-theory of $A$. In particular, we show that the graded $K$-theory of the crossed
product of a $C^*$-algebra $A$ by its grading automorphism is identical to the graded
$K$-theory of $A \hatimes \Cliff1$.

To help compute graded $K$-theory in examples, we revisit the work of Pimsner in \cite{P}
to show that his six-term sequences in $KK$-theory for Cuntz--Pimsner algebras are also
valid, with suitable modifications, for graded $C^*$-algebras. Unlike Pimsner, we
restrict to Hilbert bimodules in which the left action is both compact and injective. We
obtain a 6-term exact sequence in graded $K$-theory, which in turn gives a
Pimsner--Voiculescu sequence for graded crossed products by $\ZZ$.

We next develop substantial classes of graded $C^*$-algebras to which we can apply our
theorems. We introduce twisted $P$-graph $C^*$-algebras by straightforward generalisation
of the notion of a twisted $k$-graph algebra. We establish a number of fundamental
structure results for these $C^*$-algebras, including a version of the gauge-invariant
uniqueness theorem, to help us make identifications between these $C^*$-algebras and key
examples later in the paper. We prove that if $P$ has the form $\NN^k \times F$ where $F$
is a countable abelian group, then every $P$-graph is a crossed product of a $k$-graph by
an action of the group $F$, in a sense analogous to that studied in \cite{FPS}.

We next discuss how a functor from a $P$-graph to $\ZZ_2$ induces gradings of the
associated twisted $C^*$-algebras. We show that if $Z_2$ denotes a copy of the order-two
group $\ZZ_2$, regarded as a $\ZZ_2$-graph with one vertex, then $C^*(Z_2)$, under the
grading induced by the degree functor, is isomorphic to $\Cliff1$. More generally, we
consider the situation where $P = \NN^k \times \ZZ_2^l$. Any $P$-graph $\Lambda$ carries
both a natural functor $\delta_\Lambda$ taking values in $\ZZ_2$, and a natural
$\{-1,1\}$-valued $2$-cocycle $c_\Lambda$. We establish a universal description of the
graded twisted $C^*$-algebra determined by this functor and cocycle.

These threads come together when we study graded tensor products in terms of cartesian
products of $P$-graphs. We prove that if $\Lambda$ is a $P$-graph, $\Gamma$ is a
$Q$-graph, and we consider the associated graded, twisted $C^*$-algebras for the functors
and cocycles described in the preceding paragraph, then the graded tensor product is
isomorphic to the graded twisted $C^*$-algebra of the $(P \times Q)$-graph $\Lambda
\times \Gamma$ under its own natural cocycle and functor. Combining this with the results
of previous sections, we show that the higher complex Clifford algebras can be realised
as graded twisted $P$-graph algebras for appropriate $P$, and also that graded tensor
products of graded twisted $P$-graph algebras with $\Cliff1$ can be realised as graded
twisted $(P \times \ZZ_2)$-graph algebras.

We apply our graded Pimsner sequence to the $C^*$-algebras of row-finite 1-graphs $E$
with no sources under gradings of the sort discussed above. The result is an elegant
generalisation of the well-known formula for the $K$-theory of a directed graph: if
$A_\delta$ denotes the $E^0 \times E^0$ matrix with $A_\delta(v,w) = \sum_{e \in v E^1 w}
(-1)^{\delta(e)}$, then the graded $K$-groups of $C^*(E)$ are the cokernel and kernel of
$1 - A_\delta^t$. This recovers Haag's formulas for the graded $K$-theory of Cuntz
algebras \cite{H2}. If $\delta(e) = 1$ for all $e$ (this corresponds to the grading of
$C^*(E)$ coming from the order-two element of the gauge action), then $A_\delta$ is the
negative of the usual adjacency matrix $A_E$, and so $\Kgr_*(C^*(E))$ is given by the
cokernel and kernel of $1 + A_E^t$. We also apply our results to compute the graded
$K$-theory of certain crossed-products of graph algebras by $\ZZ_2$. Our examples and
results lead us to conjecture that the graded $K_0$-group of a $C^*$-algebra can be
described along the lines of the standard picture of ungraded $K_0$, as a group generated
by equivalence classes of graded projective modules.

\smallskip

We begin by collecting relevant background in Section~\ref{sec:background}. In
Section~\ref{sec:graded Kth} we introduce graded $K$-theory in terms of Kasparov theory,
and establish some fundamental results about it. In Section~\ref{sec:graded Pimsner} we
establish graded versions of Pimsner's six-term sequences for Hilbert bimodules with
injective left actions by compacts (see Theorem~\ref{thm:6-term}) and apply them to
obtain a graded Pimsner--Voiculescu sequence for grossed products by $\ZZ$
(Corollary~\ref{cor:gradedPV}). In Section~\ref{sec:P-graphs} we introduce twisted
$P$-graph $C^*$-algebras and establish the basic structure theory for them that we will
need later in the paper. In Section~\ref{sec:elvis}, we discuss gradings of $P$-graph
$C^*$-algebras induced by functors on the underlying $P$-graphs. In
Section~\ref{sec:graded tensor}, we establish our main results about graded tensor
products of graded $P$-graph algebras: Theorem~\ref{thm:swapping} shows that for
appropriate gradings and twisting cocycles, we have $C^*(\Lambda, c_\Lambda) \hatimes
C^*(\Gamma, c_\Gamma) \cong C^*(\Lambda \times \Gamma, c_{\Lambda \times \Gamma})$. In
section~\ref{sec:graph Kth}, we apply our results from Section~\ref{sec:graded Pimsner}
to calculate graded $K$-theory for graph $C^*$-algebras (Lemma~\ref{lem:graph K-th}). We
apply this lemma and our graded Pimsner--Voiculescu sequence to some illustrative
examples. We conclude in Section~\ref{sec:conj} by formulating our conjecture about the
structure of the graded $K_0$ group.

\section{Background}\label{sec:background}

\subsection*{Notation}
We will denote the cyclic group of order $n$ by $\ZZ_n$. We frequently regard $\ZZ_2 =
\{0,1\}$ as a ring, so we always use additive notation for the group operation, and make
any identification of $\ZZ_2$ with $\{-1,1\} \subseteq \TT$ explicit. We typically denote
the multiplication operation in the ring $\ZZ_2$ by $\cdot$.

\subsection*{Graded \texorpdfstring{$C^*$}{C*}-algebras}

Let $A$ be a $C^*$-algebra. A \textit{grading} of $A$ is an automorphism $\alpha$ of $A$
such that $\alpha^2=1$ ($\alpha$ is sometimes referred to as the grading automorphism).
We define
\[
    A_0 := \{a \in A : \alpha(a)= a\}\qquad\text{ and }\qquad
    A_1 := \{a \in A : \alpha(a) = -a\}.
\]
So $A_0, A_1$ are closed, linear self-adjoint subspaces of $A$ and satisfy $A_i A_j
\subset A_{i+j}$ for $i,j \in \ZZ_2$. We have $A = A_0 \oplus A_1$ as a Banach space.
Note that $A_0$ is a $C^*$-subalgebra of $A$. Elements of $A_0$ are called \textit{even}
and elements of $A_1$ are called \textit{odd}. To calculate $A_0$ and $A_1$, it is
helpful to note that
\begin{equation}\label{eq:Ai calc}\textstyle
A_0 = \big\{\frac{a + \alpha(a)}{2} : a \in A\big\}\quad\text{ and }\quad
A_1 = \big\{\frac{a - \alpha(a)}{2} : a \in A\big\}.
\end{equation}

If $a \in A_i$ then we say that $a$ is \emph{homogeneous} of \emph{degree $i$} and write
$\partial a = i$. If $\alpha$ is the identity map on $A$ then $A_0=A$ and $A_1 = \{ 0
\}$. The resulting grading is called the \textit{trivial grading}. Since a $C^*$-algebra
may admit several different gradings (we discuss explicit examples of this in
Examples~\ref{eg:exampleorama} below), we shall frequently write a $C^*$-algebra $A$ with
grading $\alpha$ as the pair $(A, \alpha)$.

A graded $C^*$-algebra is \textit{inner-graded} if there exists a self-adjoint unitary
(called a grading operator) $U \in \mathcal{M} (A) $ such that $\alpha(a) = U a U$ for
all $a \in A$. In \cite[\S14.1]{B} Blackadar calls an inner-grading \emph{even}. A
\emph{graded homomorphism} $\pi : (A,\alpha) \to (B,\beta)$ between graded $C^*$-algebras
is a homomorphism from $A$ to $B$ which intertwines the gradings (i.e.\ $\pi \circ \alpha
= \beta \circ \pi$).

Given a graded $C^*$-algebra $(A, \alpha_A)$ and homogeneous elements $a, b \in A$, the
\emph{graded commutator} $[a,b]^{\operatorname{gr}}$ is defined as
$[a,b]^{\operatorname{gr}} = ab - (-1)^{\partial a \cdot \partial b}ba$. This formula
extends to arbitrary $a$ and $b$ by bilinearity. In particular, if $a \in A_1$, then
\begin{equation}\label{eq:graded commutator}
[a,b]^{\operatorname{gr}} = ab - \alpha_A(b)a.
\end{equation}
If $A$ is trivially graded, then $[a,b]^{\operatorname{gr}}$ reduces to the usual
commutator $[a,b] = ab - ba$.

There is a graded tensor product operation for graded $C^*$-algebras, defined as follows.
Let $(A, \alpha)$, $(B, \beta)$ be graded $C^*$-algebras, and $A \odot B$ be their
algebraic tensor product. This becomes a $^*$-algebra when endowed with multiplication
and involution given by
\[
(a \hatimes b)(a' \hatimes b')
    = (-1)^{\partial b \cdot \partial a'} aa' \hatimes bb'
    \qquad \text{and}\qquad
(a \hatimes b)^* = (-1)^{\partial a \cdot \partial b} a^* \hatimes b^*.
\]
for homogeneous elements $a,a' \in A$ and $b, b' \in B$. We decorate the $\odot$ symbol
with a hat, $\widehat{\odot}$ to indicate that we are using this $^*$-algebra structure
on the algebraic tensor product. We write $A \hatimes B$ for the completion of the
$^*$-algebra $A \widehat{\odot} B$ in the minimal tensor-product norm. (If either $A$ or
$B$ is nuclear then this agrees with the maximal norm.)

The grading automorphism of $A \hatimes B$ is $\alpha \hatimes \beta$. So $a \hatimes b$
is homogeneous of degree $\partial a + \partial b$ if $a$ and $b$ are both homogeneous.
We have
\[
(A \hatimes B)_0 = A_0 \hatimes B_0 + A_1 \hatimes B_1\qquad\text{and}\qquad
(A \hatimes B)_1 = A_0 \hatimes B_1 + A_1 \hatimes B_0.
\]
It is straightforward to show that the graded tensor product operation is associative
(modulo the natural isomorphism $(a \hatimes b) \hatimes c \mapsto a \hatimes (b \hatimes
c)$). We have $A \hatimes B \cong B \hatimes A$ as graded $C^*$-algebras. For this and
other basic facts about graded $C^*$-algebras we refer the reader to \cite[\S14]{B}.

It may aid intuition to observe that for unital graded $C^*$-algebras $A,B$, under this
grading we have $[a \hatimes 1_B, 1_A \hatimes b]^{\operatorname{gr}} = 0 = [1_A \hatimes
b, a \hatimes 1_B]^{\operatorname{gr}}$ for all $a \in A$, and $b \in B$

An example of a graded $C^*$-algebra that we shall use very frequently is $M_{2n}(\CC)$
with grading automorphism $\alpha(\theta_{i,j}) = (-1)^{i-j} \theta_{i,j}$. This is an
inner grading, as it is implemented by the grading operator $U \in M_{2n}(\CC)$ given by
$U_{i,j} = (-1)^i\delta_{ij}$. We often write $\widehat{M}_{2n}(\CC)$ to emphasise that
we are using this grading.

\subsection*{Clifford algebras over \texorpdfstring{$\CC$}{C}}

Following \cite[Examples 14.1.2(b)]{B} the $C^*$-algebra $A = \CC \oplus \CC$ has a
grading automorphism $\alpha$ given by $\alpha(z,w) = (w,z)$. So $( \CC \oplus \CC)_0 =
\{(z,z) : z \in \CC \}$ and $( \CC \oplus \CC )_1 = \{ (z,-z) : z \in \CC \}$. This
graded $C^*$-algebra is called the first (complex)  \emph{Clifford algebra}
\cite[Section~2]{K1}, and we denote it by $\Cliff1$ with this grading $\alpha$ implicit.
As a graded $C^*$-algebra $\Cliff1$ is generated by the odd self-adjoint unitary $u =
(1,-1)$, because, for $(z,w) \in \Cliff1$, we can write
\[
(z,w)
    = \frac{1}{2}(z+w,z+w) + \frac{1}{2}(z-w,w-z)
    = \frac{1}{2} (z+w) 1 + \frac{1}{2}(z-w) u.
\]
Note that $\Cliff1$ is not inner-graded (because it is abelian), and is isomorphic to the
group $C^*$-algebra $C^* ( \ZZ_2 )$ with grading given by the dual action of
$\widehat{\ZZ_2} \cong \ZZ_2$.

The higher complex Clifford algebras are defined inductively: $\Cliff{n+1} = \Cliff{n}
\hatimes \Cliff1$ for $n \ge 1$. It is straightforward to show that $\Cliff2 \cong
\widehat{M}_2(\CC)$ as graded $C^*$-algebras (see also Example~\ref{ex:above}(i)).
Observe that $\Cliff{n}$ is generated by $n$ mutually anticommuting odd self-adjoint
unitaries.

For other basic facts about complex Clifford $C^*$-algebras we refer the reader to
\cite[\S14]{B}.

\subsection*{Graded Hilbert modules}

Let $B$ be a graded $C^*$-algebra.  A graded (right) Hilbert $B$-module is a Hilbert
$B$-module $X$ together with a decomposition of $X$ as a direct sum of two closed
subspaces $X_0$ and $X_1$ compatible with the grading of $B$ in the sense that $X_iB_j
\subset X_{i+j}$ and $\langle X_i, X_j \rangle \subset B_{i+j}$ (the graded components
$X_i$ need not be Hilbert submodules). We define the grading operator $\alpha_X$ on $X$
on homogeneous elements by $\alpha_X(x) = (-1)^j x$ if $x \in X_j$. This $\alpha_X$ is
not necessarily an adjointable operator. Given a graded Hilbert $B$-module $X$ we write
$X^\text{op}$ for the same Hilbert $B$-module with the grading components switched (so
$\alpha_{X^\text{op}} = -\alpha_X$). The grading operator on $X$ induces a grading
$\tilde{\alpha}_X$ on $\Ll(X)$ given by
\begin{equation}\label{eq:tildealpha}
    \tilde{\alpha}_X(T) = \alpha_X \circ T \circ \alpha_X \quad\text{ for all $T \in \Ll(X)$.}
\end{equation}
Under this induced grading, $T$ is homogeneous of degree $j$ if and only if $T X_k
\subseteq X_{j+k}$ for $j,k \in \ZZ_2$. For $\xi,\eta \in X$ we write $\theta_{\xi,\eta}$
for the generalised compact operator $\theta_{\xi,\eta}(\zeta) = \xi \cdot \langle \eta,
\zeta\rangle_B$. The grading $\tilde{\alpha}$ of $\Ll(X)$ restricts to a grading of
$\Kk(X)$ satisfying $\tilde{\alpha}_X(\theta_{\xi,\eta}) = \theta_{\alpha_X(\xi),
\alpha_X(\eta)}$.

Naturally $B$ may be regarded as a graded Hilbert $B$-module $B_B$ with inner product
$\langle a,b\rangle_B = a^*b$, right action given by multiplication, and grading operator
$\alpha_B$. We write $\Hh_B$ for the graded Hilbert $B$-module obtained as the direct sum
of countably many copies of $B_B$. We define $\widehat{\Hh}_B := \Hh_B \oplus
\Hh_B^\text{op}$. If $X$ is a graded Hilbert $B$-module, then $X \oplus \widehat{\Hh}_B
\cong \widehat{\Hh}_B$ by Kasparov's Stabilization Theorem (see \cite[Theorem
14.6.1]{B}).

A graded Hilbert $B$-module which is a finitely generated projective module will be
called a graded projective $B$-module throughout the paper. If $X$ is a graded projective
$B$-module, then $\Kk(X) = \Ll(X)$ and so in particular $1_X \in \Kk(X)$. Kasparov's
Stabilization Theorem implies that every  graded projective $B$-module $X$ is isomorphic
to $p\widehat{\Hh}_B$ for some even projection $p \in \Kk(\widehat{\Hh}_B)$. Moreover
$p\widehat{\Hh}_B$ is a graded projective $B$-module for any such projection. Given even
projections $p, q \in \Kk(\widehat{\Hh}_B)$, we have $p\widehat{\Hh}_B \cong
q\widehat{\Hh}_B$ if and only if there is an even partial isometry $v \in
\Kk(\widehat{\Hh}_B)$ such that $p = v^*v$ and $q = vv^*$.

\subsection*{\texorpdfstring{$C^*$}{C*}-correspondences and Cuntz--Pimsner algebras}

We briefly recap the notion of a $C^*$-correspondence and of the associated
Cuntz--Pimsner algebra and its Toeplitz extension.  For a detailed introduction to
$C^*$-correspondences, see \cite{Lance, tfb}. For more background on Cuntz--Pimsner
algebras, see \cite{P} and \cite[\S8]{Raeburn}.

Given separable $C^*$-algebras $A$ and $B$, an $A$--$B$-correspondence $X$ is a pair
$(\phi, X)$ consisting of a right-Hilbert $B$-module $X$ and a homomorphism $\phi : A \to
\Ll(X)$. We regard $\phi$ as implementing a left action of $A$ on $X$ by adjointable
operators, so we often write $\phi(a)x = a\cdot x$ for $a \in A$ and $x \in X$. We say
that $X$ is \emph{full} if $\clsp\{\langle \xi,\eta\rangle_A : \xi,\eta \in X\} = A$. Any
right-Hilbert $B$-module $X$ may be regarded as  $\CC$--$A$ correspondence and we write
$\ell$ for the canonical left action of $\CC$ by scalar multiplication. We say that $X$
is countably generated if there is a sequence $(x_i)^\infty_{i=1}$ in $X$ such that $X =
\clsp\{x_i \cdot b : i \ge 1, b \in B\}$. Note that $B_B$ is a countably generated
correspondence if $B$ is $\sigma$-unital, so in particular if $B$ is separable.

If $B$ is a $C^*$-algebra then there is an isomorphism of $\Mm(B)$ onto $\Ll(B_B)$ that
carries a multiplier $m$ to the operator of left-multiplication by $m$ on $A$. So any
homomorphism $\phi : A \to \Mm(B)$ determines an $A$--$B$-correspondence structure on
$B_B$. We denote this correspondence by ${_\phi B}$. The isomorphism of $\Mm(B)$ onto
$\Ll(B_B)$ carries $B$ onto $\Kk(B_B)$, so the left action of $A$ on ${_\phi B}$ is by
compacts if and only if $\phi$ takes values in $B$.

If $(\phi, X)$ is an $A$--$B$-correspondence and $(\psi, Y)$ is a
$B$--$C$-correspondence, then the internal tensor product $X \otimes_\psi Y$ is formed as
follows: define $[ \cdot, \cdot ]_C$ on the algebraic tensor product $X \odot Y$ by
sesquilinear extension of the formula $[ x \odot y, x' \odot y']_C := \langle y,
\psi(\langle x, x'\rangle_B)y'\rangle_C$. Let $N = \{\xi \in X \odot Y : [\xi,\xi]_C =
0\}$. Then $X \otimes_\psi Y$ is defined to be the completion of $(X \odot Y)/N$ in the
norm determined by the inner-product $\langle \xi + N, \eta + N\rangle_C = [\xi,
\eta]_C$. For $x \in X$ and $y\in Y$, we write $x \otimes y$ for $(x \odot y) + N \in X
\otimes_\psi Y$. There is then a homomorphism $\tilde\psi : \Ll(X) \to \Ll(X \otimes_\psi
Y)$ given by
\begin{equation}\label{eq:T otimes 1}
    \tilde\psi(T)(x \otimes y) = (Tx) \otimes y \quad\text{ for all $x \in X$, $y \in Y$ and $T \in \Ll(X)$.}
\end{equation}
In particular, $\tilde\psi \circ \phi$ is a homomorphism of $A$ into $\Ll(X \otimes_\psi
Y)$, making $X \otimes_\psi Y$ into an $A$--$C$-correspondence with
\[
a \cdot(x \otimes y) = \tilde\psi(\phi(a))(x \otimes y) = (a \cdot x) \otimes y.
\]
When the actions $\phi,\psi$ are clear from context, we frequently write $X \otimes_B Y$
instead of $X \otimes_\psi Y$. If $\phi : A \to B$ and $\psi : B \to C$ are homomorphisms
and $\psi$ is nondegenerate, then ${_\phi B} \otimes_B {_\psi C} \cong {_{\psi\circ\phi}
C}$ under an isomorphism taking $b \otimes c$ to $\psi(b)c$.

If $X$ is an $A$--$A$ correspondence, then we can form its tensor powers $X^{\otimes n}$
given by $X^{\otimes 0} := A$, $X^{\otimes 1} := X$ and $X^{\otimes (n+1)}:= X \otimes_A
X^{\otimes n}$. The \emph{Fock space} of $X$ is the completion $\Ff_X$ of the algebraic
direct sum $\bigoplus_{n=0}^\infty X^{\otimes n}$ in the norm coming from the inner
product $\langle \oplus_n x_n, \oplus_n y_n\rangle_A = \sum_n \langle x_n, y_n\rangle_A$.
This $\Ff_X$ is a $C^*$-correspondence over $A$ with respect to the pointwise actions.
Observe that $\Ff_X$ is full even if $X$ is not, but that the left action of $A$ on
$\Ff_X$ is not by compacts even if the action of $A$ on $X$ is.

A \emph{representation} of $X$ in a $C^*$-algebra $B$ is a pair $(\psi,\pi)$ where $\psi
: X \to B$ is a linear map, $\pi : A \to B$ is a homomorphism, and we have $\psi(a \cdot
\xi) = \pi(a)\psi(\xi)$, $\psi(\xi \cdot a) = \psi(x)\pi(a)$ and $\pi(\langle
\xi,\eta\rangle_A) = \psi(\xi)\psi(\eta)^*$ for all $\xi,\eta \in X$ and $a \in A$. There
is a universal $C^*$-algebra $\Tt_X$, called the \emph{Toeplitz algebra} of $X$,
generated by a representation $(i_X, i_A)$ of $X$. There is also a representation $(L_0,
L_1)$ of $X$ in $\Ll(\Ff_X)$ such that $L_0(a)\rho = a \cdot \rho$ for $a \in A$ and $\xi
\in \Ff_X$ and such that $L_1(\xi)\rho = \xi \otimes \rho$ for $\rho \in \bigcup_{n \ge
1} X^{\otimes n}$ and $L_1(\xi) a = \xi \cdot a$ for $a \in X^{\otimes 0} = A$. The
universal property of $\Tt_X$ gives a homomorphism $L_1 \times L_0 : \Tt_X \to
\Ll(\Ff_X)$ satisfying $(L_1 \times L_0) \circ i_X = L_1$ and $(L_1 \times L_0) \circ i_A
= L_0$. Pimsner proves \cite[Proposition~3.3]{P} that $L_1 \times L_0$ is injective.

To describe the Cuntz--Pimsner algebra of $X$, we will restrict attention to the
situation where the left action of $A$ on $X$ is injective and by compacts. As discussed
on \cite[Page~202]{P}, given a representation $(\psi,\pi)$ of $X$ in $B$ there is a
homomorphism $\psi^{(1)} : \Kk(X) \to B$ such that $\psi^{(1)}(\theta_{\xi,\eta}) =
\psi(\xi)\psi(\eta)^*$ for all $\xi,\eta \in X$. We say that the representation
$(\psi,\pi)$ is \emph{Cuntz--Pimsner covariant}, or just covariant, if
$\psi^{(1)}(\phi(a)) = \pi(a)$ for all $a \in A$. The \emph{Cuntz--Pimsner algebra}
$\Oo_X$ of $X$ is the universal $C^*$-algebra generated by a covariant representation
$(j_X, j_A)$ of $X$; so it coincides with the quotient of $\Tt_X$ by the ideal generated
by elements of the form $i_A(a) - i^{(1)}_X(\phi(a))$, $a \in A$. Under our hypotheses,
$\Kk(\Ff_X) \subseteq \Tt_X$ and is generated as an ideal by $\{L_0(a) -
L_1^{(1)}(\phi(a)) : a \in A\}$, and so $\Oo_X \cong \Tt_X/\Kk(\Ff_X)$.

If $A$ is a $C^*$-algebra and $\alpha$ is an automorphism of $A$, then there is an
isomorphism of the Cuntz--Pimsner algebra of the $A$--$A$ correspondence $X :={_\alpha
A}$ onto the crossed product $A \times_\alpha \ZZ$ that intertwines $i_A : A \to \Oo_{X}$
with the canonical inclusion $\iota : A \hookrightarrow A \times_\alpha \ZZ$, and carries
$i_{X}(a) \in \Oo_{X}$ to $U \iota(a)$, where $U \in A \times_\alpha \ZZ$ is the unitary
generator of the copy of $\ZZ$. There is a corresponding isomorphism of $\Tt_{X}$ onto
the natural Toeplitz extension of the crossed product (that is, Stacey's endomorphism
crossed-product of $A$ by $\alpha$ \cite{Stacey}).

\subsection*{Elements of Kasparov theory}\label{sec:KKbackground}

We introduce the elements of Kasparov theory needed for our work on graded $K$-theory
later. For more background, see \cite{B}.

Let $A,B$ be $C^*$-algebras, and let $X$ be an $A$--$B$-correspondence. Given gradings
$\alpha_A$ of $A$ and $\alpha_B$ of $B$, a grading operator on $X$ is a map $\alpha_X : X
\to X$ such that $\alpha_X^2 = 1$, $\alpha_X(a \cdot x \cdot b) =
\alpha_A(a)\cdot\alpha_X(a)\cdot\alpha_B(b)$ for all $a, x, b$, and $\alpha_B(\langle x,
y\rangle_B) = \langle\alpha_X(x), \alpha_X(y)\rangle_B$ for all $x,y$. We call the pair
$(X, \alpha_X)$ (or just $X$ if the grading operator is understood from context) a graded
$A$--$B$-correspondence (see, for example, \cite{HaoNg}).

Given graded $C^*$-algebras $A, B, C$, a graded $A$--$B$-correspondence $(X, \alpha_X)$
and a graded $B$--$C$-correspondence $(Y, \alpha_Y)$, there is a well-defined grading
operator $\alpha_X \hatimes\alpha_Y$ on $X \hatimes_B Y$ characterised by $(\alpha_X
\hatimes\alpha_Y)(x \hatimes y) = \alpha_X(x) \hatimes\alpha_Y(y)$; note that if $\phi :
A \to B$ is a graded homomorphism of $C^*$-algebras, then ${_\phi B}$ is a graded Hilbert
module.

If $(A, \alpha_A)$ and $(B, \alpha_B)$ are separable graded $C^*$-algebras, then a
\emph{Kasparov $A$--$B$-module} is a quadruple $(X, \phi, F, \alpha_X)$ where $(\phi, X)$
is a countably generated $A$--$B$-correspondence, $\alpha_X$ is a grading operator on
$X$, and $F \in \Ll(X)$ is odd with respect to the grading $\tilde\alpha_X$ described
at~\eqref{eq:tildealpha} in the sense that $F \circ \alpha_X = - \alpha_X \circ F$ and
satisfies
\begin{gather*}
(F  - F^*)\phi(a) \in \Kk(X), \quad (F^2 - 1)\phi(a) \in \Kk(X),\quad\text{ and }\\
    [F, \phi(a)]^{\operatorname{gr}} \in \Kk(X)\text{ for all $a \in A$.}
\end{gather*}
Observe that since $F$ is odd graded, we have $[\phi(a), F]^{\operatorname{gr}} =
\phi(a)F - F\phi(\alpha_A(A))$ by~\eqref{eq:graded commutator}. We say that $(X, \phi, F,
\alpha_X)$ is a \emph{degenerate} Kasparov module if
\begin{gather*}
(F  - F^*)\phi(a) = 0, \quad (F^2 - 1)\phi(a) = 0,\quad\text{ and }\\
    [F, \phi(a)]^{\operatorname{gr}} = 0 \text{ for all $a \in A$.}
\end{gather*}

We say that graded Kasparov modules $(X, \phi, F, \alpha_X)$ and $(Y, \psi, G, \alpha_Y)$
are unitarily equivalent if there is a unitary $U \in \Ll(X, Y)$ of degree zero (in the
sense that $\alpha_Y U = U \alpha_X$) such that $U F = G U$ and $U \phi(a) = \psi(a)U$
for all $a \in A$.

In what follows, $C([0,1])$ always has the trivial grading. Fix a $C^*$-algebra $B$, and
for each $t \in [0,1]$, define $\epsilon_t : C([0,1]) \hatimes B \to B$ by $\epsilon_t(f
\hatimes b) = f(t)b$. A \emph{homotopy} of Kasparov $(A, \alpha_A)$--$(B,
\alpha_B)$-modules from $(X_0, \phi_0, F_0, \alpha_{X_0})$ to $(X_1, \phi_1, F_1,
\alpha_{X_1})$ is a Kasparov $A$--$(B \hatimes C([0,1]))$-module $(X, \phi, F, \alpha_X)$
such that, for $t \in \{0,1\}$, and with $\tilde{\epsilon_t} : \Ll(X) \to \Ll(X \hatimes
{_BB_B})$ as described in~\eqref{eq:T otimes 1}, the module
\[
(X \hatimes_{\epsilon_t} {_BB_B}, \tilde{\epsilon_t}\circ\phi, \tilde{\epsilon_t}(F), \alpha_X \hatimes\alpha_B)
\]
is unitarily equivalent to $(X_t, \phi_t, F_t, \alpha_{X_t})$. We write $(X_0, \phi_0,
F_0, \alpha_{X_0}) \sim_h (X_1, \phi_1, F_1, \alpha_{X_1})$ and say that these two
Kasparov modules are \emph{homotopy equivalent} if there exists a homotopy from $(X_0,
\phi_0, F_0, \alpha_{X_0})$ to $(X_1, \phi_1, F_1, \alpha_{X_1})$.

We write $KK(A,B)$ for the collection of all homotopy-equivalence classes of Kasparov
$A$--$B$-modules. This $KK(A,B)$ forms an abelian group with addition given by direct
sum:
\[
[X, \phi, F, \alpha_X] + [Y, \psi, G, \alpha_Y] = [X \oplus Y, \phi \oplus \psi, F \oplus G, \alpha_X \oplus \alpha_Y],
\]
and identity element equal to the class of the trivial module $[B_B, 0, 0, \id_B]$; this
class coincides with the class of any degenerate Kasparov $A$--$B$-module. As detailed in
the proof of \cite[Proposition~17.3.3]{B}, the (additive) inverse of a class in $KK(A,B)$
is given by
\begin{equation}\label{eq:KasInverse}
    -[X, \phi, F, \alpha_X] = [X, \phi\circ\alpha_A, -F, -\alpha_X].
\end{equation}

Let $(A, \alpha_A)$ and $(B, \alpha_B)$ be graded $C^*$-algebras, let $(\phi, X)$ be an
$A$--$B$-correspondence, and suppose that $\alpha_X$ is a grading operator on $X$. If
$G,H \in \Ll(X)$ are operators for which $(X, \phi, G, \alpha_X)$ and $(X, \phi, H,
\alpha_X)$ are both Kasparov $A$--$B$-modules, then an \emph{operator homotopy} between
these Kasparov modules is a norm-continuous map $t \mapsto F_t$ from $[0,1]$ to $\Ll(X)$
such that $(X, \phi, F_t, \alpha_X)$ is a Kasparov module for each $t$, $F_0 = G$ and
$F_1 = H$. An operator homotopy is a special case of a homotopy in the following sense:
the space $\overline{X} := C([0,1], X)$ is a graded $A$--$C([0,1], B)$-correspondence
with left action given by $\big(\overline{\phi}(a)(x)\big)(t) = \phi(a)x(t)$ and grading
operator $\big(\alpha_{\overline{X}}(x)\big)(t) = \alpha_X(x(t))$. Moreover, there is an
operator $\overline{F} \in \Ll(C([0,1], X))$ given by $\overline{F}(x)(t) = F_t(x(t))$,
and then $(\overline{X}, \overline{\phi}, \overline{F}, \alpha_{\overline{X}})$ is a
Kasparov $A$--$C([0,1], B)$-module. Identifying $C([0,1], B)$ with $B \hatimes C([0,1])$
in the canonical way, we see that $(\overline{X}, \overline{\phi}, \overline{F},
\alpha_{\overline{X}})$ is a homotopy from $(X, \phi, G, \alpha_X)$ to $(X, \phi, H,
\alpha_X)$.

There is a category whose objects are separable graded $C^*$-algebras and whose morphisms
from $A$ to $B$ are homotopy classes of Kasparov $A$--$B$-modules. The composition in
this category is called the \emph{Kasparov product}, denoted $\hatimes_B : KK(A,B) \times
KK(B,C) \to KK(A,C)$. The identity morphism for the object $(A, \alpha_A)$ is the class
of the Kasparov module $(A_A, \id, 0, \alpha_A)$. Given a Kasparov $A$--$B$-module $(X,
\phi, F, \alpha_X)$ and a Kasparov $B$--$C$-module $(Y, \psi, G, \alpha_Y)$, the Kasparov
product
\[
    [X, \phi, F, \alpha_X] \hatimes_B [Y, \psi, G, \alpha_Y]
\]
has the form
\[
    [X \hatimes_\psi Y, \tilde{\phi}, H, \alpha_X \hatimes\alpha_Y]
\]
for an appropriate choice of operator $H$; the details are formidable in general, but we
will not need them here. For us it will suffice to consider Kasparov products in which
one of the factors has the form $[B_B, \phi, 0, \alpha_B]$ for some graded homomorphism
$\phi : (A, \alpha_A) \to (B, \alpha_B)$ of $C^*$-algebras.

In detail, suppose that $\phi : (A, \alpha_A) \to (B, \alpha_B)$ is a graded homomorphism
of graded $C^*$-algebras. Since $B \cong \Kk(B_B)$ via the map $b \mapsto (a \mapsto
ba)$, the quadruple $(B_B, \phi, 0, \alpha_B)$ is a Kasparov $A$--$B$-module. If $(X,
\psi, F, \alpha_X)$ is a Kasparov $B$--$C$-module, then $(X, \psi \circ \phi, F,
\alpha_X)$ is also a Kasparov module, whose class in $KK(A, C)$ we denote by $\phi^*[X,
\psi, F, \alpha_X]$. Proposition~18.7.2(b) of \cite{B} shows that
\[
[B_B, \phi, 0, \alpha_B] \hatimes_B [X, \psi, F, \alpha_X]
    = \phi^*[X, \psi, F, \alpha_X].
\]
Likewise, if $(Y, \psi, G, \alpha_Y)$ is a Kasparov $C$--$A$-module, then $(Y
\hatimes_\phi B_B, \psi \hatimes1, G \hatimes1, \alpha_Y \hatimes\alpha_B)$ is a Kasparov
$C$--$B$-module whose class we denote by $\phi_*[Y, \psi, G, \alpha_Y]$, and
\cite[Proposition~18.7.2(b)]{B} shows that
\[
[Y, \psi, G, \alpha_Y] \hatimes_A [B_B, \phi, 0, \alpha_B]
    = \phi_*[Y, \psi, G, \alpha_Y].
\]

Observe that if $(A, \alpha)$ is a graded $C^*$-algebra, then the discussion above shows
that $KK(A, A)$ is a ring under the Kasparov product, with multiplicative identity
\[
\IdKK{A} := [A, \id_A, 0, \alpha_A].
\]
By definition of the additive inverse (see~\eqref{eq:KasInverse}), the tuple $(A,
\alpha_A, 0, -\alpha_A)$ is a Kasparov module, and
\begin{equation}\label{eq:special Kas neg}
[A, \alpha_A, 0, -\alpha_A] = -\IdKK{A}
\end{equation}

\section{Graded \texorpdfstring{$K$}{K}-theory of \texorpdfstring{$C^*$}{C*}-algebras}\label{sec:graded Kth}

In this section, we consider graded $K$-theory for $C^*$-algebras. There does not appear
to be a universally-accepted definition of graded $K$-theory for $C^*$-algebras in the
literature to date. We have chosen to take Kasparov's $KK$-theory as the basis for our
definition (see \cite[Definition~17.3.1]{B}). We establish some basic properties of
graded $K$-theory; in particular, that both taking graded tensor products with $\Cliff1$,
and taking crossed products by $\ZZ_2$ with respect to a suitable grading, interchange
graded $K$-groups.

The following definition is used implicitly in \cite{H1, H2}.

\begin{dfn} \label{dfn:kgr}
Let $A$ be a separable $C^*$-algebra and let $\alpha$ be a grading automorphism of $A$.
We define the graded $K$-theory of $A$ as follows: $\Kgr_0(A, \alpha) := KK(\CC, A)$ and
$\Kgr_1(A, \alpha) := KK(\CC, A \hatimes \Cliff1)$. When $\alpha$ is understood from
context we often write $\Kgr_i(A)$ for $\Kgr_i(A, \alpha)$.
\end{dfn}

\begin{rmk}
From the above definition and results from Kasparov theory, we see that $\Kgr_j$ is
covariantly functorial, natural, continuous with respect to direct limits, and invariant
under Morita equivalence. For $j \ge 2$ we define $\Kgr_j (A) = KK(\CC, A \hatimes
\Cliff{j})$. The functors $\Kgr_j$ satisfy Bott periodicity (see Remark~\ref{rmk:later}
below). So we write $\Kgr_*(A) = (\Kgr_0(A), \Kgr_1(A))$ as we do for ungraded
$K$-theory.
\end{rmk}

Up to isomorphism Definition~\ref{dfn:kgr} agrees with the usual definition of $K$-theory
if $A$ is $\sigma$-unital and trivially graded (see \cite[18.5.4]{B}).  Furthermore (see
\cite[14.5.1~and~14.5.2]{B}), if $A$ is inner graded, then $\Kgr_*(A) \cong K_*(A)$.

\begin{rmk} \label{rmk:later}
By definition $\Kgr_0(A \hatimes \Cliff1) = KK(\CC, A \hatimes \Cliff1) = \Kgr_{1}(A)$.
We also have $\Kgr_1(A \hatimes \Cliff1) \cong \Kgr_0(A)$. To see this, recall that by
\cite[Corollary~17.8.8]{B}, we have $KK(\CC, A \hatimes \widehat{M_2}(\CC)) \cong KK(\CC,
A)$. Using this at the last step, we calculate
\[
\Kgr_1 ( A \hatimes \Cliff1 )
    = KK(\CC, A \hatimes \Cliff1 \hatimes \Cliff1)
    \cong KK(\CC, A \hatimes \widehat{M_2}(\CC))
     = \Kgr_0(A)
\]
as claimed.
\end{rmk}

\begin{example}
Since $\CC$ is trivially graded we have $\Kgr_*(\CC) = K_*(\CC) = (\ZZ, 0)$. From
Remark~\ref{rmk:later} we have $\Kgr_i ( A \hatimes \Cliff1 ) = \Kgr_{i+1} (A)$, and it
is easy to show that $ \CC \hatimes \Cliff1 \cong \Cliff1$. Hence putting $A = \CC$ we
have
\[
\Kgr_i ( \Cliff1 ) = \Kgr_{i+1} ( \CC ) = K_{i+1} ( \CC ) = \begin{cases}
0 & \text{ if } i =0 \\
\ZZ & \text{ if } i=1 .
\end{cases}
\]

Since $\Cliff1 \hatimes \Cliff1 = \Cliff2$, the preceding paragraph applied with $A =
\Cliff1$ gives $\Kgr_i ( \Cliff2 ) \cong K_i ( \CC )$. Repeating this procedure we find
that $\Kgr_i(\Cliff{n}) \cong K_{i+n}(\CC)$. So $\Kgr_i ( \Cliff{n} ) \cong \ZZ$ if $i+n$
is even and it is trivial otherwise.
\end{example}

Before moving on to some tools for computing graded $K$-theory, it is helpful to relate
it to our intuition for ordinary $K$-theory. We think of $K_0(A)$ as a group generated by
equivalence classes of projections  in $A \otimes \Kk$ so that, in particular, $[v^*v] =
[vv^*]$ whenever $v$ is a partial isometry. The following example indicates that in
graded $K$-theory similar relations hold for \emph{homogeneous} partial isometries in
graded $C^*$-algebras, but with an additional dependence on the parity of the partial
isometry in question. We discuss this further in Section~\ref{sec:conj}

\begin{example}\label{eg:odd pi}
Let $A$ be a graded $C^*$-algebra with grading automorphism $\alpha$ and suppose that $v$
is an odd partial isometry in $\Kk(\widehat{\Hh}_A)$.
We obtain graded Kasparov modules as follows: let $p = v^*v$ and $q = vv^*$. By the usual
abuse of notation we let $\alpha$ denote the grading operator on $\widehat{\Hh}_A$ and
observe that if $p \in \Kk(\widehat{\Hh}_A)$ is a homogeneous projection then $\alpha$
restricts to a grading operator on the graded  submodule $p\widehat{\Hh}_A \subseteq
\Hh_A$ denoted $\alpha|_{p\widehat{\Hh}_A}$. Recall that $\ell$ denotes the action of
$\CC$ by scalar multiplication on any Hilbert module. We can form the Kasparov modules
$(\ell, p\widehat{\Hh}_A, 0, \alpha|_{p\widehat{\Hh}_A})$ and $(\ell, q\widehat{\Hh}_A,
0, \alpha|_{q\widehat{\Hh}_A})$. We claim that the classes $[p\widehat{\Hh}_A]_K$ and
$[q\widehat{\Hh}_A]_K$ of these Kasparov modules satisfy
$[p\widehat{\Hh}_A]_K=-[q\widehat{\Hh}_A]_K$ in $KK (\CC, A)$. These are Kasparov modules
because $\Kk(p\widehat{\Hh}_A)$ is isomorphic to $p\Kk(\widehat{\Hh}_A)p$ which is unital
with unit $p$, and so all adjointables are compact. In particular the zero operator $F =
0$ trivially has the property that $F^2 - 1$, $F^* - F$ and $[F, a]^{\operatorname{gr}}$
are compact for all $a \in \CC$.

To see that $[p\widehat{\Hh}_A]_K = -[q\widehat{\Hh}_A]_K$, observe that $\Ad v$
implements an isomorphism $p\widehat{\Hh}_A \to q\widehat{\Hh}_A,$. This isomorphism is
odd in the sense that $\alpha|_{q\widehat{\Hh}_A} = \Ad v \circ
({-\alpha|_{p\widehat{\Hh}_A}})$. We claim that
\begin{equation}\label{eq:Kas sum}
    \Big(p\widehat{\Hh}_A \oplus q\widehat{\Hh}_A, \ell, 0, \alpha|_{p\widehat{\Hh}_A} \oplus \alpha|_{q\widehat{\Hh}_A}\Big)
\end{equation}
is operator homotopic to a degenerate Kasparov module. To see this, note that the module
\begin{equation}\label{eq:degenerate}
    \Big(p\widehat{\Hh}_A \oplus q\widehat{\Hh}_A, \ell, \Big(\begin{smallmatrix} 0 & v^* \\ v & 0\end{smallmatrix}\Big),
        \alpha|_{p\widehat{\Hh}_A} \oplus \alpha|_{q\widehat{\Hh}_A}\Big)
\end{equation}
is a degenerate Kasparov module because $F_v := \big(\begin{smallmatrix} 0 & v^* \\
v & 0\end{smallmatrix}\big)^2 = \id$. Since the straight-line path from $0$ to $F_v$
implements an operator homotopy from~\eqref{eq:Kas sum} to~\eqref{eq:degenerate}, we
conclude that~\eqref{eq:Kas sum} represents $0_{KK(\CC, A)}$ as required.
\end{example}

We now discuss how crossed products by $\ZZ_2$ relate to graded $K$-theory. Let $A$ be a
graded $C^*$-algebra with grading automorphism $\alpha$. Let $v$ be an odd self-adjoint
unitary in $\Mm(A)$ and define $\tilde{\alpha} =\alpha \circ \operatorname{Ad} v$. Since
$\alpha$ commutes with $\operatorname{Ad} v$, this makes $A$ bi-graded in the sense that
it admits two commuting gradings by $\ZZ_2$, or equivalently a grading by $\ZZ_2 \times
\ZZ_2$. Let $A_{jk}$ denote the bihomogeneous elements of degree $j$ with respect to
$\alpha$ and of degree $k$ with respect to $\operatorname{Ad} v$; that is, $a \in A_{jk}$
if and only if $\alpha(a) = (-1)^j a$ and $\operatorname{Ad} v (a) = (-1)^k a$. So if $a
\in A_{jk}$, then $\tilde{\alpha} (a) = (-1)^{j+k} a$. Let $\tilde{A}$ denote the
$C^*$-algebra $A$ graded by $\tilde{\alpha}$.

In the following statement and proof, indices in $\ZZ_2$ are denoted $j,k,l$, and $i$ is
reserved for the imaginary number $i = \sqrt{-1}$.

\begin{thm} \label{thm:newgrading}
With notation as above there is an isomorphism of graded $C^*$-algebras $\phi : A
\hatimes \Cliff1 \to \tilde{A} \hatimes \Cliff1$ such that
\[
\phi ( a_{jk} \hatimes u^l ) = a_{jk} (iv)^k \hatimes u^l ,
\]

\noindent where $a_{jk} \in A_{jk}$ for $j, k, l\in \ZZ_2$, and $u$ is the odd generator
of $\Cliff1$.
\end{thm}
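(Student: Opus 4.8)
The plan is to realize $\phi$ as $\beta \hatimes \id_{\Cliff1}$ for a graded $*$-isomorphism $\beta \colon (A,\alpha) \to (\tilde A, \tilde\alpha)$, exploiting the fact that $A$ and $\tilde A$ have the same underlying $*$-algebra and differ only in their grading. Writing $w := iv \in \Mm(A)$, I would first record the three facts I need about $w$: it is a unitary with $w^* = -w$ and $w^2 = -1$, and it satisfies the commutation relation $w a_{jk} = (-1)^k a_{jk} w$ for $a_{jk} \in A_{jk}$. The last of these is immediate: from $v a_{jk} v = (-1)^k a_{jk}$ we get $v a_{jk} = (-1)^k a_{jk} v$, hence $w a_{jk} = i v a_{jk} = (-1)^k a_{jk}(iv)$. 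I would then define $\beta$ on bihomogeneous elements by $\beta(a_{jk}) = a_{jk} w^k$ and extend linearly; this is well defined since $A = \bigoplus_{j,k} A_{jk}$. Granting that $\beta$ is a graded $*$-isomorphism, functoriality of the graded tensor product (minimal, hence unambiguous since $\Cliff1$ is nuclear) produces the graded $*$-isomorphism $\phi := \beta \hatimes \id_{\Cliff1}$, and $\phi(a_{jk} \hatimes u^l) = \beta(a_{jk}) \hatimes u^l = a_{jk}(iv)^k \hatimes u^l$ as claimed.

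The heart of the argument is multiplicativity of $\beta$. For $a \in A_{jk}$ and $a' \in A_{j'k'}$, the commutation relation applied $k$ times gives
\[
\beta(a)\beta(a') = a\, w^k a'\, w^{k'} = (-1)^{kk'} a a'\, w^{k+k'},
\]
where the exponent $k+k'$ is an ordinary integer. Since $w^2 = -1$, when $k = k' = 1$ the power $w^{k+k'}$ contributes a further sign $-1 = (-1)^{kk'}$ and otherwise contributes none, so in every case $w^{k+k'} = (-1)^{kk'} w^m$ where $m \in \{0,1\}$ is the class of $k+k'$ in $\ZZ_2$. The two factors of $(-1)^{kk'}$ cancel, leaving $\beta(a)\beta(a') = a a'\, w^m = \beta(aa')$, since $aa' \in A_{j+j',\,k+k'}$ has $\Ad v$-index $m$. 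This cancellation is exactly why one twists by $iv$ rather than by $v$: with $v$ (where $v^2 = 1$) only the commutation sign would survive and $\beta$ would fail to be multiplicative. A parallel one-line computation, using $a^* \in A_{jk}$ and $(w^k)^* = (-1)^k w^k$,
\[
\beta(a)^* = (w^k)^* a^* = (-1)^k w^k a^* = (-1)^k(-1)^k a^* w^k = \beta(a^*),
\]
shows $\beta$ is $*$-preserving.

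It remains to check that $\beta$ intertwines the gradings and is bijective. Since $\tilde\alpha(iv) = \alpha(\Ad v(iv)) = \alpha(iv) = -iv$, the element $a_{jk} w^k$ has $\tilde\alpha$-degree $(j+k) + k = j$, so $\beta$ carries the $\alpha$-homogeneous part of degree $j$ into the $\tilde\alpha$-homogeneous part of degree $j$; this is precisely $\tilde\alpha \circ \beta = \beta \circ \alpha$, and it is also what guarantees that $\beta \hatimes \id_{\Cliff1}$ is multiplicative (the tensor twist uses the $\alpha$-degree in the domain and the $\tilde\alpha$-degree in the codomain, and these agree under $\beta$). For bijectivity I would observe that $\beta$ preserves the $\Ad v$-grading, acting as the identity on the $\Ad v$-even part and as right multiplication by the unitary $iv$ on the $\Ad v$-odd part; both are linear bijections of the respective summands, so $\beta$ is a bijection, and a bijective $*$-homomorphism is a $*$-isomorphism.

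The main obstacle is the sign bookkeeping in the multiplicativity step: one must keep straight three separate sources of signs — the graded-tensor twist (handled for free once $\beta$ is known to be graded, by routing through $\beta \hatimes \id$), the commutation of $w^k$ past $a'$, and the collapse of the integer power $w^{k+k'}$ to its $\ZZ_2$-indexed value via $w^2 = -1$ — and check that the last two cancel. Once this cancellation and its $*$-analogue are secured, the remaining verifications are routine. I note in passing that this construction already yields a graded isomorphism $(A,\alpha) \cong (\tilde A, \tilde\alpha)$ directly, the stated map being its image under $-\hatimes \Cliff1$.
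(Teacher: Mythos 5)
Your proof is correct, but it is organised differently from the paper's. The paper verifies the stated formula directly inside the tensor products: it checks that $\phi$ preserves degree and is multiplicative on bihomogeneous elementary tensors, tracking the Koszul signs of $A \hatimes \Cliff1$ and $\tilde{A} \hatimes \Cliff1$, and the engine is exactly the cancellation you isolate --- the commutation sign $(-1)^{k\cdot k'}$ from moving $(iv)^k$ past $a'_{j'k'}$ against the sign in $i^k i^{k'} = (-1)^{k\cdot k'} i^{k+k'}$. You instead remove the Clifford factor from the problem: you show that $\beta(a_{jk}) := a_{jk}(iv)^k$ is already a graded $*$-isomorphism $(A,\alpha) \to (\tilde{A}, \tilde{\alpha})$ and obtain $\phi$ as $\beta \hatimes \id_{\Cliff1}$. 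Both proofs turn on the same sign cancellation, but yours is cleaner (no tensor signs to track in the hard step) and it buys a strictly stronger conclusion, which you note at the end: $(A,\alpha) \cong (\tilde{A},\tilde{\alpha})$ outright, with no $\Cliff1$ needed; from this the theorem, and indeed Corollaries~\ref{cor:2x2iso} and~\ref{cor:kthy-iso}, are immediate. The paper never claims this stronger fact, so it is worth recording that your claim really is true: your $\beta$ is conjugation by the unitary $u_0 := \tfrac{1}{\sqrt 2}(1-iv) \in \Mm(A)$, since $\tfrac12(1-iv)a(1+iv)$ equals $a$ when $vav=a$ and equals $a(iv)$ when $vav=-a$; with this observation multiplicativity, $*$-preservation and bijectivity of $\beta$ are automatic, and the intertwining $\tilde{\alpha} \circ \Ad u_0 = \Ad u_0 \circ \alpha$ reduces to the three identities $\Ad v(u_0)=u_0$, $\alpha(u_0)=u_0^*$ and $u_0^2=-iv$. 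Two small points in your write-up deserve to be made explicit, though neither is a gap: for your bijectivity argument you need that right multiplication by $iv$ maps the $\Ad v$-odd part onto itself (one line: if $vbv=-b$ then $v\,(b\,iv)\,v = i\,vb = -i\,bv = -(b\,iv)$), and to define $\beta$ on all of $A$ you should note that the projections $a \mapsto a_{jk}$ are bounded (being averages of the four automorphisms $\id$, $\alpha$, $\Ad v$, $\alpha\circ\Ad v$), so that $\beta$ is bounded and the identities proved on bihomogeneous elements extend by linearity and continuity.
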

\begin{proof}
First, we check that $\phi$ preserves the grading: The element $a_{jk} \hatimes u^l$ is
$(j+l)$-graded in $A \hatimes \Cliff1$ (with $A$ graded by $\alpha$).  The element
$a_{jk} (iv)^k \hatimes u^l$ is homogeneous of degree $(j+k+k)+l = j + l$ in $\tilde{A}
\hatimes \Cliff1$  with $\tilde{A}$ graded by $\tilde{\alpha}$. In $A \hatimes \Cliff1$
we compute $(a_{jk} \hatimes u^l)(a'_{j'k'} \hatimes u^{l'}) = (-1)^{lj'} a_{jk}
a'_{j'k'} \hatimes u^{l+l'}$. Applying $\phi$ to both sides, and computing in $\tilde{A}
\hatimes \Cliff1$ (with $\tilde{A}$ graded by $\tilde{\alpha}$) we obtain
\begin{align*}
\phi ( a_{jk} \hatimes u^l ) \phi ( a'_{j'k'} \hatimes u^{l'} )
    &= ( a_{jk} (iv)^k \hatimes u^l ) ( a'_{j'k'} (iv)^{k'} \hatimes u^{l'} ) \\
    &= (-1)^{l(j'+k'+k')}a_{jk} (iv)^k a'_{j'k'} (iv)^{k'} \hatimes u^{l+l'}.
\end{align*}
Since $v a'_{j'k'} v = (-1)^{k'}a'_{j'k'}$, we have $(iv)^k a'_{j'k'} (iv)^{k'} =
(-1)^{k\cdot k'} a'_{j'k'} (iv)^k(iv)^{k'}$. Since $k,k'$ belong to the ring $\ZZ_2$, we
have $i^k i^{k'} = (-1)^{k\cdot k'} i^{k+k'}$. So the factors of $(-1)^{k\cdot k'}$
cancel, and we obtain
\begin{align*}
\phi ( a_{jk} \hatimes u^l ) \phi ( a'_{j'k'} \hatimes u^{l'} )
    &= (-1)^{lj'} a_{jk} a'_{j'k'} (iv)^{k+k'} \hatimes u^{l+l'}\\
    &= \phi \big( (-1)^{lj'} a_{jk} a'_{j'k'} \hatimes u^{l+l'} \big) \\
    &= \phi \big( ( a_{jk} \hatimes u^l ) ( a'_{j'k'} \hatimes u^{l'} ) \big).
\end{align*}
The result follows because $A$ is spanned by its bihomogeneous elements.
\end{proof}

Recall that $\widehat{M}_2(\CC)$ denotes the algebra of $2 \times 2$ complex matrices
with the standard inner grading (so diagonal elements are even and off diagonal elements
are odd). Similarly, let $\widehat{\Kk}$ denote the $C^*$-algebra of compact operators
with the standard inner grading. We define $\widehat{M}_2(A) := A \hatimes
\widehat{M}_2(\CC) \cong A \hatimes\Cliff2 \cong A \hatimes\Cliff1 \hatimes\Cliff1$.

\begin{cor} \label{cor:2x2iso}
Continuing with the notation of Theorem~\ref{thm:newgrading}, the isomorphism $\phi : A
\hatimes \Cliff1 \to \tilde{A} \hatimes \Cliff1$ induces an isomorphism $\widehat{M}_2(A)
\cong \widehat{M}_2( \tilde{A} )$, which in turn induces an isomorphism $A \hatimes
\widehat{\Kk} \cong \tilde{A} \hatimes \widehat{\Kk}$. In particular, $\Kgr_*(A) \cong
\Kgr_*(\tilde{A})$.
\end{cor}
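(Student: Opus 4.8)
The plan is to build the corollary by promoting the single isomorphism $\phi$ of Theorem~\ref{thm:newgrading} through two stabilizations, and then invoking the Morita-invariance of graded $K$-theory recorded after Definition~\ref{dfn:kgr}. First I would tensor $\phi$ on the right by the identity automorphism of $\Cliff1$ to obtain a graded isomorphism
\[
\phi \hatimes \id_{\Cliff1} : A \hatimes \Cliff1 \hatimes \Cliff1 \longrightarrow \tilde{A} \hatimes \Cliff1 \hatimes \Cliff1.
\]
Since $\widehat{M}_2(A) = A \hatimes \Cliff1 \hatimes \Cliff1$ by the definition given just before the corollary (and likewise for $\tilde{A}$), this is exactly a graded isomorphism $\widehat{M}_2(A) \cong \widehat{M}_2(\tilde{A})$. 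Here I would use that graded tensoring by a fixed graded $C^*$-algebra is functorial for graded homomorphisms, which is immediate from the universal/explicit description of the minimal graded tensor product recorded in the background section.

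Next I would amplify from $\widehat{M}_2$ to $\widehat{\Kk}$. The key observation is that $\widehat{\Kk}$ is the direct limit (inductive limit) of the graded algebras $\widehat{M}_{2^n}(\CC)$ under the standard top-left-corner inclusions, each of which is a graded embedding for the standard inner gradings; equivalently $\widehat{\Kk} \cong \varinjlim \Cliff{2n}$ compatibly with the gradings. Iterating the previous step gives compatible graded isomorphisms $A \hatimes \Cliff{2n} \cong \tilde{A} \hatimes \Cliff{2n}$ intertwining the connecting maps, so passing to the limit yields a graded isomorphism $A \hatimes \widehat{\Kk} \cong \tilde{A} \hatimes \widehat{\Kk}$. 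Finally, since $A \hatimes \widehat{\Kk}$ and $A$ are Morita equivalent as graded $C^*$-algebras (and similarly for $\tilde{A}$), the Morita-invariance of $\Kgr_*$ noted in the remark following Definition~\ref{dfn:kgr} gives
\[
\Kgr_*(A) \cong \Kgr_*(A \hatimes \widehat{\Kk}) \cong \Kgr_*(\tilde{A} \hatimes \widehat{\Kk}) \cong \Kgr_*(\tilde{A}),
\]
which is the assertion.

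The main obstacle is purely bookkeeping with the gradings: one must verify that each inclusion $\widehat{M}_{2^n} \hookrightarrow \widehat{M}_{2^{n+1}}$ respects the standard inner grading (so that the $\Cliff{2n} \hookrightarrow \Cliff{2n+2}$ description is the \emph{graded} inductive system), and that the graded isomorphisms produced at stage $n$ genuinely commute with these connecting maps so that the direct-limit isomorphism exists. Both reduce to the fact that the grading operator on $\widehat{M}_{2^n}(\CC)$ is the inner grading by the diagonal self-adjoint unitary $U_{i,j} = (-1)^i \delta_{ij}$ compatibly across the tower, which is a routine check. Apart from this compatibility, no hard analysis is needed: continuity of $\Kgr_*$ with respect to direct limits and its Morita-invariance are both quoted directly from the remark after Definition~\ref{dfn:kgr}.
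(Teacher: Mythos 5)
Your proposal is correct, and two of its three steps coincide with the paper's: you obtain $\widehat{M}_2(A) \cong \widehat{M}_2(\tilde{A})$ exactly as the paper does, by tensoring $\phi$ with $\id_{\Cliff1}$ and using associativity of the graded tensor product, and your final appeal to Morita invariance of $\Kgr_*$ is the paper's appeal to stability of Kasparov theory in different words. The genuinely different step is the passage from $\widehat{M}_2$ to $\widehat{\Kk}$. The paper does this in one line via the canonical isomorphism $\widehat{\Kk} \cong \Kk \hatimes \widehat{M}_2(\CC)$ with $\Kk$ trivially graded, so that $A \hatimes \widehat{\Kk} \cong \widehat{M}_2(A) \hatimes \Kk \cong \widehat{M}_2(\tilde{A}) \hatimes \Kk \cong \tilde{A} \hatimes \widehat{\Kk}$ follows simply by tensoring the already-established isomorphism with $\id_{\Kk}$, with no compatibility checks at all. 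You instead realize $\widehat{\Kk}$ as the graded inductive limit of the $\widehat{M}_{2^n}(\CC) \cong \Cliff{2n}$ under corner embeddings and pass a compatible tower of isomorphisms to the limit; this is valid, since the corner embeddings respect the standard inner gradings, the stage-$n$ isomorphisms $\phi \hatimes \id$ commute with the connecting maps (which act only on the last tensor factor, by an even projection), and the graded minimal tensor product is continuous with respect to inductive limits. One point of precision: it is this last fact -- continuity of $\hatimes$ under inductive limits -- that your argument actually uses, not continuity of $\Kgr_*$ with respect to direct limits as you state, because you construct the isomorphism $A \hatimes \widehat{\Kk} \cong \tilde{A} \hatimes \widehat{\Kk}$ at the level of $C^*$-algebras before any $K$-theory enters. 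The trade-off is that your route makes explicit how graded isomorphisms propagate through graded inductive systems, at the cost of the bookkeeping you flag, whereas the paper's route reduces everything to the single observation that the standard grading on $\widehat{\Kk}$ factors as the trivial grading on $\Kk$ tensored with the standard grading on $M_2(\CC)$.
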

\begin{proof}
By Theorem ~\ref{thm:newgrading}, we have $A \hatimes \Cliff1 \cong \tilde{A} \hatimes
\Cliff1$. Hence,
\[
\widehat{M}_2(A) \cong  A \hatimes \Cliff1 \hatimes \Cliff1 \cong \tilde{A} \hatimes \Cliff1 \hatimes \Cliff1 \cong \widehat{M}_2( \tilde{A} )
\]
by the associativity of the graded tensor product. The second assertion then follows from
the canonical isomorphism $\widehat{\Kk} \cong \Kk \hatimes\widehat{M_2}(\CC)$. The final
statement follows from the stability of Kasparov theory.
\end{proof}

Let $B$ be a graded $C^*$-algebra with grading automorphism $\beta$. By
\cite[Proposition~14.5.4]{B} we have $B \hatimes \Cliff1 \cong B \rtimes_\beta \ZZ_2$ as
$C^*$-algebras, and the grading on $B \hatimes \Cliff1$ is determined by the automorphism
$\alpha := (\beta \times 1)\circ\hat{\beta}$ where $\hat{\beta}$ is the grading
determined by the dual action on $B \rtimes_\beta \ZZ_2$ under this identification. Now
let $u$ be the canonical self-adjoint unitary generator of $\Cliff1$ and let $v:=1
\hatimes u \in \Mm(B \hatimes \Cliff1)$. Then $v$ is also an odd self-adjoint unitary
(with respect to the grading $\alpha$); moreover, we have $\operatorname{Ad} v = \beta
\times 1$.

\begin{cor}\label{cor:kthy-iso}
With notation as above, if we endow $B \rtimes_\beta \ZZ_2$ with the grading associated
to the dual action, then $\Kgr_i(B \rtimes_\beta \ZZ_2) \cong \Kgr_i(B \hatimes \Cliff1)
= \Kgr_{i+1}(B)$ for $i = 0,1$.
\end{cor}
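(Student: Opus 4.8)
The plan is to exhibit the dual-action grading $\hat\beta$ on $B \rtimes_\beta \ZZ_2 \cong B \hatimes \Cliff1$ as the perturbation of the tensor-product grading $\alpha$ by conjugation by the odd unitary $v = 1 \hatimes u$, and then to invoke Corollary~\ref{cor:2x2iso} together with Remark~\ref{rmk:later}. Concretely, I would set $A := B \hatimes \Cliff1$ with its grading $\alpha = (\beta\times 1)\circ\hat\beta$ and view it as an instance of the setup of Theorem~\ref{thm:newgrading}, taking as the odd self-adjoint unitary the element $v \in \Mm(A)$ identified in the paragraph preceding the corollary. The perturbed grading furnished by that theorem is $\tilde\alpha = \alpha\circ\Ad v$.

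The heart of the argument is the identification $\tilde\alpha = \hat\beta$. Since $\Ad v = \beta\times 1$ is an involution, the relation $\alpha = (\beta\times 1)\circ\hat\beta$ rearranges to $\hat\beta = \Ad v\circ\alpha$, and I would first verify that $v$ is odd for $\hat\beta$: from $(\beta\times 1)(v) = \Ad v(v) = v$ and the oddness $\alpha(v) = -v$ one computes $\hat\beta(v) = (\beta\times 1)(\alpha(v)) = -(\beta\times 1)(v) = -v$. Thus $v$ is $\hat\beta$-homogeneous, so $\Ad v$ commutes with $\hat\beta$, and hence
\[
\tilde\alpha = \alpha\circ\Ad v = \Ad v\circ\hat\beta\circ\Ad v = \hat\beta,
\]
the final equality using $(\Ad v)^2 = \id$.

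Granting this, Corollary~\ref{cor:2x2iso} applied to $(A,\alpha)$ and $v$ will give $\Kgr_*(B \hatimes \Cliff1) = \Kgr_*(A,\alpha) \cong \Kgr_*(A,\tilde\alpha) = \Kgr_*(B \rtimes_\beta \ZZ_2)$, where the right-hand side carries the dual-action grading $\hat\beta = \tilde\alpha$. Remark~\ref{rmk:later} then supplies $\Kgr_i(B \hatimes \Cliff1) = \Kgr_{i+1}(B)$, and splicing the two statements together yields the corollary. The only step requiring real care is the identification $\tilde\alpha = \hat\beta$, which reduces to checking that $v$ is homogeneous for $\hat\beta$ so that $\Ad v$ and $\hat\beta$ commute; everything else is a direct citation of the corollary and the remark.
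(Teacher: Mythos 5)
Your proposal is correct and follows exactly the paper's route: the paper's proof is precisely the one-line invocation of Corollary~\ref{cor:2x2iso} with $A := B \hatimes \Cliff1$, $\alpha := (\beta \times 1)\circ\hat{\beta}$ and $v = 1 \hatimes u$, combined with Remark~\ref{rmk:later}. The only difference is that you explicitly verify the identification $\tilde{\alpha} = \alpha \circ \Ad v = \hat{\beta}$ (via the $\hat{\beta}$-oddness of $v$ and the resulting commutation of $\Ad v$ with $\hat{\beta}$), a step the paper leaves implicit; your verification is correct and is exactly what makes the paper's citation legitimate.
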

\begin{proof}
This follows immediately from the final assertion of Corollary~\ref{cor:2x2iso} with $A
:= B \hatimes \Cliff1$ and $\alpha := (\beta \times 1)\circ\hat{\beta}$.
\end{proof}
\begin{rmk}
With notation as in the above corollary, observe that since the canonical embedding $B \to B \rtimes_\beta \ZZ_2$
may be regarded as a graded homomorphism when $B$ is given the trivial grading and $B \rtimes_\beta \ZZ_2$
is given the dual grading, there is a natural homomorphism
\[
K_i(B) \to \Kgr_i(B \rtimes_\beta \ZZ_2) \cong \Kgr_{i+1}(B) \quad\text{for }i = 0,1.
\]
\end{rmk}

\begin{example}
Let $\beta$ be the grading automorphism of $C(\TT)$ such that $\beta(f)(z) =
f(\overline{z})$. Then there is a short exact sequence of graded $C^*$-algebras:
\[
0 \to C_0(\RR) \hatimes \Cliff1 \xrightarrow{\imath} C(\TT) \xrightarrow{\epsilon} \CC \oplus \CC
\]
where $C_0(\RR)$ and $\CC \oplus \CC$ are trivially graded and $\epsilon(f) = f(1) \oplus
f(-1)$ for all $f \in C(\TT)$. Hence by \cite[Theorem~1.1]{S} we have a six-term exact
sequence:
\[
\begin{tikzpicture}[yscale=0.7]
    \node (00) at (0,0) {$\Kgr_1(\CC \oplus \CC)$};
    \node (40) at (4,0) {$\Kgr_1(C(\TT))$};
    \node (80) at (8,0) {$\Kgr_1(C_0(\RR) \hatimes \Cliff1)$};
    \node (82) at (8,2) {$\Kgr_0(\CC \oplus \CC)$};
    \node (42) at (4,2) {$\Kgr_0(C(\TT))$};
    \node (02) at (0,2) {$\Kgr_0(C_0(\RR) \hatimes \Cliff1)$};
    \draw[-stealth] (02)-- node[above] {${\scriptstyle \imath_*}$} (42);
    \draw[-stealth] (42)-- node[above] {${\scriptstyle \epsilon_*}$} (82);
    \draw[-stealth] (82) -- (80);
    \draw[-stealth] (80)-- node[above] {${\scriptstyle \imath_*}$} (40);
    \draw[-stealth] (40)-- node[above] {${\scriptstyle \epsilon_*}$} (00);
    \draw[-stealth] (00) -- (02);
\end{tikzpicture}
\]
Since $\Kgr_0(C_0(\RR) \hatimes \Cliff1) \cong \ZZ$, $\Kgr_0(\CC \oplus \CC) \cong \ZZ^2$
and $\Kgr_1(C_0(\RR) \hatimes \Cliff1)  = \Kgr_1(\CC \oplus \CC) = 0$, we obtain
$\Kgr_*(C(\TT)) = (\ZZ^3, 0)$ (cf.\ \cite[p.~105]{H2}). It follows by Corollary~\ref{cor:kthy-iso} and
Remark~\ref{rmk:later} that $\Kgr_*(C(\TT) \rtimes_\beta \ZZ_2) = (0, \ZZ^3)$. Note that
$C(\TT) \rtimes_\beta \ZZ_2$ is isomorphic to the $C^*$-algebra of the infinite dihedral
group $\ZZ \rtimes \ZZ_2 \cong \ZZ_2 * \ZZ_2$. Under the isomorphism $C(\TT)
\rtimes_\beta \ZZ_2 \cong C^*(\ZZ_2 * \ZZ_2)$, the dual grading $\hat{\beta}$ becomes the
canonical grading on $C^*(\ZZ_2 * \ZZ_2)$ determined by requiring that both self-adjoint
unitary generators be odd. Note that $C^*(\ZZ_2 * \ZZ_2)$ is the universal unital
$C^*$-algebra generated by two projections (see \cite{RaeSin}).
\end{example}
\begin{rmk}
Let $\alpha$ be the grading automorphism of $C_0(\RR)$ given by $\alpha(f)(x) = f(-x)$ for $f \in C_0(\RR)$.
A  computation similar to the above shows that $\Kgr_*(C_0(\RR)) \cong (\ZZ^2, 0)$.

\end{rmk}

\section{Pimsner's exact sequences for graded \texorpdfstring{$C^*$}{C*}-algebras}\label{sec:graded Pimsner}

The main result of this section, Theorem~\ref{thm:6-term}, shows how to compute the
graded $K$-theory of the Cuntz--Pimsner algebra of a graded $C^*$-correspondence over a
nuclear, separable $C^*$-algebra. We also obtain a graded version of the
Pimsner--Voiculescu 6-term exact sequence for crossed products in
Corollary~\ref{cor:gradedPV}.

To prove our main theorem we follow Pimsner's computation of the $KK$-theory of the
Cuntz--Pimsner algebra $\Oo_X$ in \cite[\S4]{P}, keeping track of the gradings.

\subsection*{Set up} Throughout this section we fix a graded, separable, nuclear
$C^*$-algebra $(A, \alpha_A)$, and a graded $A$--$A$-correspondence $(X, \alpha_X)$ in
the sense of Section~\ref{sec:KKbackground}.

We assume that the left action $\varphi: A \to \Ll(X)$ is injective, by compacts (i.e.\
$\varphi (A) \subseteq \Kk (X)$) and essential in the sense that
$\overline{\varphi(A)X}=X$. We also assume that $X$ is full, that is $\clsp\{\langle
\xi,\eta\rangle_A : \xi,\eta \in X\} = A$. It is not clear that all of these hypotheses
are required for our arguments (for example, Pimsner does not require that the left
action should be by compacts or injective in \cite{P}), but they simplify the discussion
and cover the examples that interest us most.

Recall that there is an induced grading $\tilde{\alpha}_X$ of $\Ll(X)$ given by
$\tilde{\alpha}_X(T) = \alpha_X \circ T \circ \alpha_X$.

Let $[X] \in KK(A, A)$ denote the class of the Kasparov module $\big(X, \varphi, 0,
\alpha_X\big)$.

\begin{lem}\label{lem:grading adjointable}
With notation as above, if $\alpha_A$ is trivial, then $\alpha_X \in \Ll(X)$, and it is
an even self-adjoint unitary with respect to $\tilde{\alpha}_X$. Let
\begin{align*}
X_0 &:= \clsp\{x + \alpha_X(x) : x \in A\} \quad\text{and}\\
X_1 &:= \clsp\{x - \alpha_X(x) : x \in X\}.
\end{align*}
Then $X \cong X_0 \oplus X_1$ as $A$--$A$-correspondences, and in $KK(A,A)$, we have $[X]
= [X_0] - [X_1]$.
\end{lem}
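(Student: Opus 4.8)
The plan is to first promote $\alpha_X$ to an adjointable operator, then split the Kasparov module $(X,\varphi,0,\alpha_X)$ along the eigenspaces of $\alpha_X$, and finally invoke the inverse formula~\eqref{eq:KasInverse} to account for the sign carried by the odd summand. The only point at which the triviality of $\alpha_A$ is essential is the first step.

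First I would use $\alpha_A = \id$ to show $\alpha_X \in \Ll(X)$. The defining compatibility conditions $\alpha_X(a\cdot x\cdot b) = \alpha_A(a)\cdot\alpha_X(x)\cdot\alpha_A(b)$ and $\alpha_A(\langle x,y\rangle_A) = \langle \alpha_X(x),\alpha_X(y)\rangle_A$ become, under $\alpha_A = \id$, the statements that $\alpha_X$ is right $A$-linear, commutes with the left action $\varphi$, and preserves the inner product. Together with $\alpha_X^2 = 1$, inner-product preservation forces $\alpha_X$ to be adjointable with $\alpha_X^* = \alpha_X$, so $\alpha_X$ is a self-adjoint unitary in $\Ll(X)$; and $\tilde{\alpha}_X(\alpha_X) = \alpha_X^3 = \alpha_X$ shows that it is even.

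Next I would form the complementary even projections $P_0 := \tfrac{1}{2}(1+\alpha_X)$ and $P_1 := \tfrac{1}{2}(1-\alpha_X)$ in $\Ll(X)$, whose ranges are precisely $X_0$ and $X_1$. Since $\alpha_X$ commutes with both actions, the $P_i$ do too, so $X_0$ and $X_1$ are sub-correspondences; orthogonality $X_0\perp X_1$ follows from the inner-product relation $\langle\alpha_X x,\alpha_X y\rangle_A = \langle x,y\rangle_A$ applied to $x\in X_0$, $y\in X_1$ (alternatively from $\langle X_i,X_j\rangle_A\subseteq A_{i+j}$ with $A_1 = \{0\}$). This yields $X\cong X_0\oplus X_1$ as $A$--$A$-correspondences. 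I would then verify that each $(X_i,\varphi_i,0,\id_{X_i})$ is a genuine Kasparov module: $X_i$ is countably generated as a complemented submodule of $X$, and the compression $\varphi_i(a) = P_i\varphi(a)P_i$ lies in $P_i\Kk(X)P_i = \Kk(X_i)$ because $\varphi(a)$ commutes with $P_i$ and $\varphi(a)\in\Kk(X)$; this is all that is required when $F = 0$.

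Finally I would assemble the identity in $KK(A,A)$. Under $X\cong X_0\oplus X_1$ the module $(X,\varphi,0,\alpha_X)$ becomes $(X_0,\varphi_0,0,\id_{X_0})\oplus(X_1,\varphi_1,0,-\id_{X_1})$, because $\alpha_X$ restricts to $\id$ on $X_0$ and to $-\id$ on $X_1$. By the definition of addition in $KK(A,A)$, the class $[X]$ is the sum of the classes of these two summands. The second summand agrees with $(X_1,\varphi_1,0,\id_{X_1})$ except for the sign of its grading operator, and since $\alpha_A = \id$ gives $\varphi_1\circ\alpha_A = \varphi_1$, formula~\eqref{eq:KasInverse} identifies its class as $-[X_1]$. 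Hence $[X] = [X_0] - [X_1]$. The genuinely delicate step is the adjointability of $\alpha_X$: in general a grading operator need not lie in $\Ll(X)$, and it is exactly the triviality of $\alpha_A$ that rescues this and makes the projections $P_0,P_1$ available; everything after that is bookkeeping.
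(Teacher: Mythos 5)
Your proposal is correct and follows essentially the same route as the paper's proof: establish that triviality of $\alpha_A$ makes $\alpha_X$ an even self-adjoint unitary in $\Ll(X)$, decompose $X$ into the $\pm 1$ eigenspaces of $\alpha_X$ (your projections $P_i = \tfrac12(1\pm\alpha_X)$ are just a mild repackaging of the paper's direct verification that $X_0$ and $X_1$ are orthogonal sub-correspondences), and then identify the class of $(X_1,\varphi_1,0,-\id)$ with $-[X_1]$ via~\eqref{eq:KasInverse}, using $\varphi_1\circ\alpha_A=\varphi_1$. No gaps; the argument matches the paper's step for step.
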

\begin{proof}
Since $\alpha_X$ is idempotent and $\alpha_A$ is trivial, for all $\xi , \eta \in X$ we
have
\[
\langle \alpha_X(\xi), \eta\rangle_A
    = \langle \alpha_X(\xi), \alpha^2_X(\eta)\rangle_A
    = \alpha_A(\langle \xi, \alpha_X(\eta)\rangle_A)
    = \langle \xi, \alpha_X(\eta)\rangle_A.
\]
Hence $\alpha_X$ is a self-adjoint unitary in $\Ll(X)$ and since
$\tilde{\alpha}_X(\alpha_X) = \alpha_X \circ \alpha_X \circ \alpha_X = \alpha_X$ it
follows that $\alpha_X$ is even. Since $\alpha_A$ is trivial, for $a,b \in A$, we have
\[
a \cdot (x \pm \alpha_X(x)) \cdot b
    = a\cdot x \cdot b \pm a \cdot \alpha_X(x) \cdot b
    = a\cdot x \cdot b \pm \alpha_X(a\cdot x\cdot b),
\]
so $A \cdot X_i, X_i \cdot A \subseteq X_i$ for $i = 0,1$.

For $\xi \in X_0$ and $\eta \in X_1$, we have
\[
\langle \xi, \eta\rangle_A
    = \langle \alpha_X(\xi), \eta\rangle_A
    = \langle \xi, \alpha_X(\eta)\rangle_A
    = \langle \xi, -\eta\rangle_A
    = -\langle \xi, \eta\rangle_A.
\]
So $X_0 \perp X_1$, giving $X \cong X_0 \oplus X_1$ as right-Hilbert $A$-modules. Since
$\alpha_A$ is trivial, if $\varphi : A \to \Kk(X)$ is the homomorphism defining the left
action, then $\varphi(A)X_j \subseteq X_j$ for $j = 0,1$. So $X \cong X_0 \oplus X_1$ as
$C^*$-correspondences. We write $\varphi_j : A \to \Kk(X_j)$ for the homomorphism $a
\mapsto \varphi(a)|_{X_j}$.

We now have
\[
\big(X, \varphi, 0, \alpha_X\big)
    \cong \Big(X_0 \oplus X_1, \varphi_0 \oplus \varphi_1, 0, \big(\begin{smallmatrix}1 & 0 \\0 & -1\end{smallmatrix}\big)\Big)
\]
as graded Kasparov modules. The class of the right-hand side is the Kasparov sum of
$[X_0, \varphi_0, 0, \id]$ and $[X_1, \varphi_1, 0, -\id]$. Since $\alpha_A = \id_A$ we
have $\varphi_1 = \varphi_1 \circ \alpha_A$, and so $[X_1, \varphi_1, 0, -\id]$ is
precisely the inverse of $[X_1] = [X_1, \varphi_1, 0, \id]$ in $KK(A, A)$ as described
at~\eqref{eq:KasInverse}, and the result follows.
\end{proof}

Whether or not $A$ is trivially graded, for $\xi, \eta \in X$, and $a \in A$, we have
\[
\alpha_X(\xi \cdot a) \hatimes\alpha_X(\eta)
    = \alpha_X(\xi) \hatimes\alpha_X(a \cdot \eta),
\]
and using this we see that there is an isometric idempotent operator $\alpha_X \hatimes
\alpha_X : X \hatimes_A X \to X \hatimes_A X$ characterised by $\xi \hatimes\eta \mapsto
\alpha_X(\xi) \hatimes\alpha_X(\eta)$. So $\alpha_X$ induces isometric operators
$\alpha_X^{\hatimes n}$ on $X^{\hatimes n}$. We regard the $X^{\hatimes n}$ as graded
$A$--$A$-correspondences with respect to these operators. When we want to emphasise this
grading, we write $X^{\hatimes n}$ for the tensor-product module. Under this grading, if
$\xi_1, \dots, \xi_n$ are homogeneous, say $\xi_k \in X_{j_k}$, then $\xi_1 \hatimes_A
\cdots \hatimes_A \xi_n$ is homogeneous with degree $\sum_k j_k$. When convenient we
write $X^{\hatimes 0}$ for $A$.

If $A$ is trivially graded, then Lemma~\ref{lem:grading adjointable} shows that each
$\alpha^{\hatimes n}_X$ is a self-adjoint unitary.

Let $\Ff_X := \bigoplus_{n=0}^\infty X^{\hatimes n}$ be the Fock space of $X$ \cite{P}.
Then $\Ff_X$ is a $C^*$-correspondence over $A$. We write $\varphi^\infty$ for the
homomorphism $A \to \Ll(\Ff_X)$ implementing the diagonal left action.

The operator $\alpha_X^\infty := \bigoplus^\infty_{n=0} \alpha_X^{\hatimes n}$ is a
grading of $\Ff_X$ and the induced grading on $\Ll(\Ff_X)$ restricts to gradings
$\alpha_\Kk$ and $\alpha_\Tt$ of $\Kk(\Ff_X)$ and $\Tt_X$ respectively. These satisfy
\[
\alpha_\Kk(\theta_{\xi, \eta})
    = \theta_{\alpha^{\hatimes n}_X(\xi), \alpha^{\hatimes m}_X(\eta)}
    \quad\text{ and}\quad
\alpha_\Tt(T_\xi)
    = T_{\alpha^{\hatimes n}_X(\xi)}
\]
for $\xi \in X^{\hatimes n}$ and $\eta \in X^{\hatimes m}$. Since these gradings are
compatible with the inclusion $\Kk(\Ff_X) \hookrightarrow \Tt_X$, they induce a grading
$\alpha_\Oo$ on $\Oo_X \cong \Tt_X/\Kk(\Ff_X)$.

If $A$ is trivially graded, then Lemma~\ref{lem:grading adjointable} shows that
$\alpha_\Tt$ and $\alpha_\Kk$ are inner gradings; but $\alpha_\Oo$ need not be, as we
shall see later.

\subsection*{Pimsner's six-term exact sequence in \texorpdfstring{$KK$}{KK}-theory.}
Let $i_A : A \to \Tt_X$ denote the canonical inclusion homomorphism. Then $i_A$
determines a Kasparov class
\begin{equation}\label{eq:iA class}
[i_A] = [\Tt_X, i_A, 0, \alpha_\Tt] \in KK(A, \Tt_X).
\end{equation}

Pimsner constructs a class in $KK(\Tt_X , A)$ as follows: Let $\pi_0 : \Tt_X \to
\Ll(\Ff_X)$ denote the canonical representation determined by $\pi_0(T_\xi)\rho := \xi
\hatimes_A \rho$ for $\xi \in X$ and $\rho \in X^{\hatimes n}$. One checks, using the
universal property of $\Tt_X$, that there is a second representation $\pi_1 : \Tt_X \to
\Ll(\Ff_X)$ such that for $\rho \in X^{\hatimes n} \subseteq \Ff_X$,
\[
\pi_1(a) \rho
    = \begin{cases}
        \pi_0(a)\rho &\text{ if $n \ge 1$} \\
        0 &\text{ if $n = 0$.}
    \end{cases}
\]
Arguing as in \cite[Lemma~4.2]{P}, we see that $\pi_0(T) - \pi_1(T) \in \Kk(\Ff_X)$ for
all $T \in \Tt_X$. The operator $\big(\begin{smallmatrix} 0&1\\1&0\end{smallmatrix}\big)$
is odd-graded with respect to the grading operator $\overline{\alpha}^\infty_X :=
\big(\begin{smallmatrix} \alpha^\infty_X&0\\0&-\alpha^\infty_X\end{smallmatrix}\big)$,
and so for $T \in \Tt_X$, using the formula~\eqref{eq:graded commutator}, we compute the
graded commutator:
\begin{align*}
\Big[\big(\begin{smallmatrix} 0&1\\1&0\end{smallmatrix}\big), \big(\begin{smallmatrix} \pi_0&0\\0&\pi_1 \circ \alpha_\Tt\end{smallmatrix}\big)(T)\Big]^{\operatorname{gr}}
    &= \Big(\begin{smallmatrix} 0&\pi_1 \circ \alpha_\Tt(T)\\ \pi_0(T)&0\end{smallmatrix}\Big)
            - \Big(\begin{smallmatrix} 0&\pi_0(\alpha_\Tt(T))\\\pi_1 \circ \alpha_\Tt(\alpha_\Tt(T))&0\end{smallmatrix}\Big)\\
    &= \Big(\begin{smallmatrix} 0&(\pi_1 - \pi_0)\circ \alpha_\Tt(T)\\ (\pi_0-\pi_1)(T)&0\end{smallmatrix}\Big),
\end{align*}
which is compact since $(\pi_0 - \pi_1)(T)$ acts only on the finitely-generated submodule
$A \subseteq \Ff_X$. Hence we obtain a Kasparov module
\[
M:= \Big(\Ff_X \oplus \Ff_X, \pi_0 \oplus \pi_1 \circ \alpha_\Tt,
    \big(\begin{smallmatrix} 0&1\\1&0\end{smallmatrix}\big),
    \big(\begin{smallmatrix} \alpha^\infty_X&0\\0&-\alpha^\infty_X\end{smallmatrix}\big)\Big).
\]
Since the essential subspace of $\Ff_X \oplus \Ff_X$ for $\pi_0 \oplus \pi_1 \circ
\alpha_T$ is complemented, replacing $\Ff_X \oplus \Ff_X$ with the essential subspace for
$\pi_0 \oplus \pi_1 \circ \alpha_\Tt$, and adjusting the Fredholm operator accordingly
yields a module representing the same class (see \cite[Proposition~18.3.6]{B}). Hence,
writing $P : \Ff_X \to \Ff_X \ominus A = \bigoplus^\infty_{n=1} X^{\hatimes n}$ for the
projection onto the orthogonal complement of the 0\textsuperscript{th} summand, we have
\begin{equation}\label{eq:alt [M]}
[M] = \Big[\Ff_X \oplus (\Ff_X \ominus A), \pi_0 \oplus \pi_1 \circ \alpha_\Tt,
    \big(\begin{smallmatrix} 0&1\\P&0\end{smallmatrix}\big),
    \big(\begin{smallmatrix} \alpha^\infty_X&0\\0&-\alpha^\infty_X\end{smallmatrix}\big)  \Big]
    \in KK(\Tt_X, A).
\end{equation}

In the ungraded case, the classes $[M] \in KK(\Tt_X, A)$ and $[i_A] \in KK ( A , \Tt_X )$
described at \eqref{eq:alt [M]}~and~\eqref{eq:iA class} are denoted $\alpha$ and $\beta$
in \cite[\S 4]{P}.

\begin{thm}[{cf. \cite[Theorem~4.4]{P}}]\label{thm:kk-equiv}
With notation as above, the pair $[M]$ and $[i_A]$ are mutually inverse. That is, $[i_A]
\hatimes_{\Tt_X} [M] = \IdKK{A}$ and $[M] \hatimes_{A} [i_A] = \IdKK{\Tt_X}$. In
particular $A$ and $\Tt_X$ are $KK$-equivalent as graded $C^*$-algebras.
\end{thm}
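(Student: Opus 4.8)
The plan is to follow Pimsner's proof of \cite[Theorem~4.4]{P} essentially line by line, the only genuinely new task being to check that every module, operator, and homotopy occurring there respects the gradings $\alpha_\Tt$, $\alpha^\infty_X$ and $\overline{\alpha}^\infty_X$. The starting point is that $[i_A]$ is the class of the \emph{graded} homomorphism $i_A : (A,\alpha_A) \to (\Tt_X, \alpha_\Tt)$ (indeed $\alpha_\Tt(i_A(a)) = T_{\alpha_X^{\hatimes 0}(a)} = i_A(\alpha_A(a))$). Hence by \cite[Proposition~18.7.2(b)]{B} the two Kasparov products are just functorial operations applied to the single class $[M]$:
\[
[i_A] \hatimes_{\Tt_X} [M] = (i_A)^*[M] \qquad\text{and}\qquad [M] \hatimes_A [i_A] = (i_A)_*[M].
\]
So it suffices to prove $(i_A)^*[M] = \IdKK{A}$ and $(i_A)_*[M] = \IdKK{\Tt_X}$.

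For the first identity I would precompose the left actions of $M$ with $i_A$, turning the left action of $a \in A$ on $\Ff_X \oplus \Ff_X$ into $\varphi^\infty(a) \oplus \pi_1(\alpha_\Tt(i_A(a)))$. The orthogonal decomposition $\Ff_X = A \oplus (\Ff_X \ominus A)$ of each summand is preserved by both actions and by the flip $\big(\begin{smallmatrix}0 & 1\\ 1 & 0\end{smallmatrix}\big)$, so it splits $(i_A)^*[M]$ as a sum of a piece on $(\Ff_X \ominus A)^{\oplus 2}$ and a piece on $A^{\oplus 2}$. On the first piece the two actions are $\varphi^\infty(a)$ and $\varphi^\infty(\alpha_A(a))$, and the grading compatibility $\alpha^\infty_X\,\varphi^\infty(a)\,\alpha^\infty_X = \varphi^\infty(\alpha_A(a))$ makes the graded commutator of the (self-adjoint, square-one) flip with the action vanish; this piece is therefore degenerate. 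On the second piece the action is $L_a \oplus 0$, where $L_a$ denotes left multiplication, and the norm-continuous path $t \mapsto t\big(\begin{smallmatrix}0 & 1\\ 1 & 0\end{smallmatrix}\big)$ is an operator homotopy (it stays Kasparov because $L_a \in \Kk(A_A)$) to $\big(A \oplus A,\, L \oplus 0,\, 0,\, \big(\begin{smallmatrix}\alpha_A & 0\\ 0 & -\alpha_A\end{smallmatrix}\big)\big)$. This last module is $\IdKK{A} = [A, \id, 0, \alpha_A]$ plus the degenerate module $[A, 0, 0, -\alpha_A]$, giving $(i_A)^*[M] = \IdKK{A}$.

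The second identity $(i_A)_*[M] = \IdKK{\Tt_X}$ is the substantial one, and is where I would follow Pimsner most closely. Here one forms the pushforward module $(\Ff_X \oplus \Ff_X) \hatimes_{i_A} (\Tt_X)_{\Tt_X}$, carrying operator $\big(\begin{smallmatrix}0 & 1\\ 1 & 0\end{smallmatrix}\big) \hatimes 1$, left $\Tt_X$-action $(\pi_0 \oplus \pi_1\circ\alpha_\Tt)\hatimes 1$, and grading $\overline{\alpha}^\infty_X \hatimes \alpha_\Tt$, and constructs Pimsner's homotopy from this module to $[\Tt_X, \id, 0, \alpha_\Tt]$. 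The engine is the self-similar identity $\Ff_X \hatimes_{i_A} \Tt_X \cong \Tt_X \oplus \big(X \hatimes_A (\Ff_X \hatimes_{i_A} \Tt_X)\big)$ coming from $\Ff_X = A \oplus (X \hatimes_A \Ff_X)$ (using that $i_A$ is nondegenerate, which follows from $\overline{\varphi(A)X} = X$), together with the relation $\pi_0(T) - \pi_1(T) \in \Kk(\Ff_X)$ from \cite[Lemma~4.2]{P}: these let one rotate the flip to a diagonal operator, peeling off a degenerate summand and leaving exactly the identity correspondence over $\Tt_X$.

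I expect this final homotopy to be the main obstacle. Pimsner's manipulations involve creation operators, the vacuum projection onto $A \subseteq \Ff_X$, and shift operators on the Fock module, and the new point is to arrange that at every stage the intertwining isomorphisms are even and the homotopy operators are odd with respect to $\overline{\alpha}^\infty_X \hatimes \alpha_\Tt$. The identities $\alpha_\Tt(T_\xi) = T_{\alpha^{\hatimes n}_X(\xi)}$ and $\alpha^{\hatimes n}_X(\xi_1 \hatimes \cdots \hatimes \xi_n) = \alpha_X(\xi_1) \hatimes \cdots \hatimes \alpha_X(\xi_n)$ are precisely what guarantee that Pimsner's maps and homotopies are graded, so rather than rederive the homotopy I would verify these compatibilities step by step and invoke Pimsner's computation for the analytic content.
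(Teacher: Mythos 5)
Your reduction via \cite[Proposition~18.7.2]{B} and your proof of the first identity $(i_A)^*[M] = \IdKK{A}$ are correct: your block-splitting of $\Ff_X \oplus \Ff_X$ into $(A \oplus A) \oplus \big((\Ff_X \ominus A) \oplus (\Ff_X \ominus A)\big)$, followed by the operator homotopy $t \mapsto tF$ on the $A \oplus A$ block, is a clean variant of the paper's argument (the paper instead adds $-\IdKK{A} = [A, \alpha_A, 0, -\alpha_A]$ to $(i_A)^*[M]$ and checks that the resulting sum is degenerate). Note that your degenerate block works precisely because its two legs are $\varphi^\infty$ and $\varphi^\infty \circ \alpha_A$: in graded $KK$, degeneracy of a ``flip'' module requires the second leg to be the first leg precomposed with the grading automorphism.

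The gap is in the second identity, exactly at the step you call the main obstacle. Your claim that the compatibility identities $\alpha_\Tt(T_\xi) = T_{\alpha_X^{\hatimes n}(\xi)}$, etc.\ ``are precisely what guarantee that Pimsner's maps and homotopies are graded'' is false: Pimsner's homotopy cannot simply be checked to be graded, it must be \emph{modified}. Pimsner's rotation ends at the module whose two legs are both $\pi'_0$, and in the graded setting that endpoint is not degenerate --- it is not even a Kasparov module, since the graded commutator of the flip with the action is
\[
\Big[\big(\begin{smallmatrix}0&1\\1&0\end{smallmatrix}\big), (\pi'_0 \oplus \pi'_0)(T)\Big]^{\operatorname{gr}}
 = \Big(\begin{smallmatrix} 0 & \pi'_0(T - \alpha_\Tt(T))\\ \pi'_0(T - \alpha_\Tt(T)) & 0\end{smallmatrix}\Big),
\]
which for $T = T_\xi$ with $\xi$ odd equals $2\big(\begin{smallmatrix}0&\pi'_0(T_\xi)\\ \pi'_0(T_\xi)&0\end{smallmatrix}\big)$, a non-compact creation operator. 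The actual graded proof (as in the paper) requires inserting $\alpha_\Tt$ in two places: (i) the vacuum action $\tau$ on $A \hatimes_A \Tt_X \cong \Tt_X$ must be the $\alpha_\Tt$-twisted left multiplication, reflecting the fact that in graded $KK$ the inverse of the identity is $[\Tt_X, \alpha_\Tt, 0, -\alpha_\Tt]$ --- the representation itself gets twisted, see~\eqref{eq:KasInverse}; and (ii) the rotation formula must read
\[
\psi_t(\xi) = \cos(\pi t/2)\big(\pi'_0(\alpha_\Tt(T_\xi)) - \pi'_1(T_\xi)\big) + \sin(\pi t/2)\,\tau(\xi) + \pi'_1(T_\xi),
\]
so that the homotopy runs from $\pi'_0 \circ \alpha_\Tt$ (not $\pi'_0$) to $\pi'_1 + \tau$, and the endpoint pairs $\pi'_0$ with $\pi'_0 \circ \alpha_\Tt$, which \emph{is} graded-degenerate. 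The twist in~(ii) is also what saves Pimsner's orthogonality argument: since $\pi'_1$ already carries the twist $(\pi_1 \circ \alpha_\Tt) \hatimes 1$, the difference $\pi'_0(\alpha_\Tt(T_\xi)) - \pi'_1(T_\xi)$ is supported on the vacuum summand, whereas the naive difference $\pi'_0(T_\xi) - \pi'_1(T_\xi)$ is not, so both the Toeplitz-representation check and the compact-perturbation claim would fail without it. Ironically, this is the very same twist mechanism you used correctly in your first-identity argument; the missing idea is recognising that it forces a rewriting of Pimsner's formulas rather than a mere verification of them.
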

\begin{proof}
Since the class $[i_A]$ is induced by a homomorphism of $C^*$-algebras, we can compute
the products  $[i_A] \hatimes_{\Tt_X} [M]$ and $[M] \hatimes_A [i_A]$ using
\cite[Proposition~18.7.2]{B}.

As discussed in Pimsner's proof, \cite[Proposition~18.7.2(b)]{B} implies that the product
$[i_A] \hatimes_{\Tt_X} [M]$ is equal to $(i_A)^*[M]$, and hence, using the
representative~\eqref{eq:alt [M]} of $[M]$, we obtain
\[
[i_A] \hatimes_{\Tt_X} [M] = \Big[\Ff_X \oplus (\Ff_X \ominus A),
     (\pi_0 \oplus \pi_1 \circ \alpha_\Tt) \circ i_A,
    \big(\begin{smallmatrix} 0&1\\P&0\end{smallmatrix}\big),
    \big(\begin{smallmatrix} \alpha^\infty_X&0\\0&-\alpha^\infty_X\end{smallmatrix}\big)\Big].
\]
We have
\begin{align}
\Big(\Ff_X \oplus (\Ff_X \ominus A), (\pi_0 \oplus \pi_1 \circ \alpha_\Tt)&{} \circ i_A,
    \big(\begin{smallmatrix} 0&1\\P&0\end{smallmatrix}\big),
    \big(\begin{smallmatrix} \alpha^\infty_X&0\\0&-\alpha^\infty_X\end{smallmatrix}\big)\Big)
    \oplus \big(A, \alpha_A, 0, -\alpha_A\big)\nonumber\\
    &\cong
    \Big(\Ff_X \oplus \Ff_X, (\pi_0 \oplus \pi_0 \circ \alpha_\Tt) \circ i_A,
        \big(\begin{smallmatrix} 0&1\\1&0\end{smallmatrix}\big),
        \big(\begin{smallmatrix} \alpha^\infty_X&0\\0&-\alpha^\infty_X\end{smallmatrix}\big)\Big).\label{eq:last display}
\end{align}
The operator $F =  \big(\begin{smallmatrix} 0&1\\1&0\end{smallmatrix} \big)$ satisfies
$F^2 = 1$, $F = F^*$. For $a \in A$, we have
\begin{align*}
F \big(\pi_0 \oplus \pi_0 \circ \alpha_\Tt\big)(i_A(a))
    &= \Big(\begin{matrix} 0 & 1 \\ 1 & 0\end{matrix}\Big)
        \Big(\begin{matrix} \pi_0(i_A(a)) & 0 \\ 0 & \pi_0(i_A(\alpha_A(a))) \end{matrix}\Big)\\
    &= \Big(\begin{matrix} \pi_0(i_A(\alpha_A(a))) & 0 \\ 0 & \pi_0(i_A(a)) \end{matrix}\Big)
        \Big(\begin{matrix} 0 & 1 \\ 1 & 0\end{matrix}\Big)\\
    &= \big(\pi_0 \oplus \pi_0 \circ \alpha_\Tt\big)(i_A(\alpha_A(a))) F.
\end{align*}
So the graded commutator $\big[F, \big(\pi_0 \oplus \pi_0 \circ
\alpha_\Tt\big)(i_A(a))\big]^{\operatorname{gr}}$ is zero by~\eqref{eq:graded
commutator}. Hence~\eqref{eq:last display} is a degenerate Kasparov module, and therefore
represents the zero class. By~\eqref{eq:special Kas neg}, we have $\big[A, \alpha_A, 0,
-\alpha_A\big] = -\IdKK{A}$, so we have $[i_A] \hatimes_{\Tt_X} [M] - \IdKK{A} =
0_{KK(A,A)}$ giving $[i_A] \hatimes_{\Tt_X} [M] = \IdKK{A}$.

For the reverse composition, \cite[Proposition~18.7.2(a)]{B} shows that $[M] \hatimes_{A}
[i_A]$ is equal to $(i_A)_*[M]$, which is represented by
\[
\Big((\Ff_X \oplus \Ff_X) \hatimes_A \Tt_X, (\pi_0 \oplus \pi_1 \circ \alpha_\Tt) \hatimes 1_{\Tt_X},
    \big(\begin{smallmatrix} 0&1\\1&0\end{smallmatrix}\big),
    \big(\begin{smallmatrix} \alpha^\infty_X \hatimes \alpha_\Tt&0\\
                             0&-\alpha^\infty_X \hatimes \alpha_\Tt\end{smallmatrix}\big)\Big).
\]
We write $\pi_0'$ and $\pi_1'$ for $\pi_0 \hatimes 1_{\Tt_X}$ and $(\pi_1 \circ
\alpha_\Tt) \hatimes 1_{\Tt_X}$. Since $X$ is essential as a left $A$-module, we have $A
\hatimes_A \Tt_X \cong \Tt_X$ as graded $A$--$\Tt_X$-correspondences, so the grading
$\alpha_\Tt$ implements a left action of $\Tt_X$ on $A \hatimes_A \Tt_X$. We regard this
as an action $\tau$ of $\Tt_X$ on $\Ff_X \hatimes_A \Tt_X$ that acts nontrivially only on
the 0\textsuperscript{th} summand. Consider the Kasparov module
\[
\Big((\Ff_X \oplus \Ff_X) \hatimes_A \Tt_X, (0 \oplus \tau),
        \big(\begin{smallmatrix} 0&1\\1&0\end{smallmatrix}\big),
        \big(\begin{smallmatrix} \alpha^\infty_X \hatimes \alpha_\Tt&0\\
                             0&-\alpha^\infty_X \hatimes \alpha_\Tt\end{smallmatrix}\big)\Big).
\]
The essential subspace of the action $0 \oplus \tau$ is equal to the copy of $A
\hatimes_A \Tt_X$ in the graded submodule $(0 \oplus \Ff_X) \hatimes_A \Tt_X$
of $(\Ff_X \oplus \Ff_X) \hatimes_A \Tt_X$. Moreover, the restriction of $0 \oplus \tau$
to this submodule is just $\alpha_\Tt$. Hence
\[
\big[(\Ff_X \oplus \Ff_X) \hatimes_A \Tt_X, (0 \oplus \tau),
        \big(\begin{smallmatrix} 0&1\\1&0\end{smallmatrix}\big),
        \big(\begin{smallmatrix} \alpha^\infty_X \hatimes \alpha_\Tt&0\\
                             0&-\alpha^\infty_X \hatimes \alpha_\Tt\end{smallmatrix}\big)\big]
    = [\Tt_X, \alpha_{\Tt}, 0, -\alpha_\Tt] = - \IdKK{\Tt_X}
\]
by~\eqref{eq:special Kas neg}.

Therefore,
\begin{align}
(i_A)_*[M] - [ 1_{\Tt_X}]
    &= \Big[ (\Ff_X \oplus \Ff_X) \hatimes_A \Tt_X, \pi_0' \oplus \pi_1',
    \big(\begin{smallmatrix} 0&1\\1&0\end{smallmatrix}\big),
    \big(\begin{smallmatrix} \alpha^\infty_X \hatimes \alpha_\Tt&0\\
                             0&-\alpha^\infty_X \hatimes \alpha_\Tt\end{smallmatrix}\big) \Big] \nonumber\\
    &\qquad\qquad + \Big[ (\Ff_X \oplus \Ff_X) \hatimes_A \Tt_X, (0 \oplus \tau),
        \big(\begin{smallmatrix} 0&1\\1&0\end{smallmatrix}\big),
        \big(\begin{smallmatrix} \alpha^\infty_X \hatimes \alpha_\Tt&0\\
                             0&-\alpha^\infty_X \hatimes \alpha_\Tt\end{smallmatrix}\big) \Big] \nonumber\\
    &= \Big[  (\Ff_X \oplus \Ff_X) \hatimes_A \Tt_X, \pi_0' \oplus (\pi_1' + \tau),
        \big(\begin{smallmatrix} 0&1\\1&0\end{smallmatrix}\big),
        \big(\begin{smallmatrix} \alpha^\infty_X \hatimes \alpha_\Tt&0\\
                             0&-\alpha^\infty_X \hatimes \alpha_\Tt\end{smallmatrix}\big) \Big].\label{eq:mustbe0}
\end{align}
We claim that there is a homotopy of graded homomorphisms $\pi'_t : \Tt_X \to \Ll(\Ff_X  \hatimes_A \Tt_X)$
from $\pi_0' \circ \alpha_\Tt$ to $\pi_1' + \tau$ such that for each $t \in  [0,1]$,
\begin{equation}\label{eq:interpolating Kasmod}
\Big((\Ff_X \oplus \Ff_X) \hatimes_A \Tt_X, \pi_0' \oplus \pi_t',
        \big(\begin{smallmatrix} 0&1\\1&0\end{smallmatrix}\big),
        \big(\begin{smallmatrix} \alpha^\infty_X \hatimes \alpha_\Tt&0\\
                             0&-\alpha^\infty_X \hatimes \alpha_\Tt\end{smallmatrix}\big)\Big)
\end{equation}
is a Kasparov module. To see this, we invoke the universal property of $\Tt_X$. For each
$t$, following Pimsner, define a linear map $\psi_t : X \to \Ll(\Ff_X \hatimes_A \Tt_X)$
by
\[
\psi_t(\xi) = \big(\cos(\pi t/2) (\pi'_0(\alpha_\Tt(T_\xi)) - \pi'_1(T_\xi))
    + \sin(\pi t/2) \tau(\xi)\big) + \pi'_1(T_\xi).
\]
Recall that $\varphi^\infty : A \to \Ll(\Ff_X)$ denotes the homomorphism given by the
diagonal left action of $A$. We write $\tilde\varphi^\infty$ for $\varphi^\infty
\hatimes_A 1_{\Tt_X}$.
We aim to prove that $(\tilde{\varphi}^\infty \circ \alpha_A, \psi_t)$ is
a Toeplitz representation of $X$ for each $t \in [0,1]$.
Since each $\psi_t$ is a convex combination of bimodule maps, we
see that
\[
\tilde\varphi^\infty(\alpha_A(a))\psi_t(\xi) = \psi_t(a \cdot \xi)
    \quad\text{ and }\quad
\psi_t(\xi)\tilde\varphi^\infty(\alpha_A(a)) = \psi_t(\xi\cdot a)
\]
for all $a, \xi, t$. Next we check that $\psi_t$ is compatible with the inner product.
Note that for all $\xi,\eta, \zeta \in X$ the operators
$(\pi'_0(\alpha_\Tt(T_\xi)) - \pi'_1(T_\xi))$, $\tau(\eta)$ and $\pi'_1(T_\zeta)$
have mutually orthogonal ranges in $\Ff_X \hatimes_A \Tt_X$ (the same
observation is made in Pimsner's argument, and the only difference between his operators
and ours is post-composition with $\alpha_\Tt$).
Given $\xi,\eta \in X$ and $t \in [0,1]$ we have
\begin{align*}
\psi_t(\xi)^*\psi_t(\eta)
    &=  \big( \big(\cos(\pi t/2)(\pi'_0(\alpha_\Tt(T_\xi)) - \pi'_1(T_\xi))
        + \sin(\pi t/2) \tau(\xi)\big) + \pi'_1(T_\xi)\big)^*  \\
      &  \qquad \big(\cos(\pi t/2)(\pi'_0(\alpha_\Tt(T_\eta)) - \pi'_1(T_\eta))
        + \sin(\pi t/2) \tau(\eta)\big) + \pi'_1(T_\eta)\big)
          \\
    &= \big( \big(\cos( \pi t/2)(\pi'_0(\alpha_\Tt(T_\xi)) - \pi'_1(T_\xi)) + \sin(\pi t/2) \tau(\xi)\big)\big)^*  \\
     &   \qquad \big(\cos(\pi t/2)(\pi'_0(\alpha_\Tt(T_\eta)) - \pi'_1(T_\eta)) + \sin(\pi t/2) \tau(\eta)\big)
       + \pi'_1(T_\xi)^*\pi_1'(T_\eta).
\end{align*}
Write $\tilde{P}$ for the projection onto $(\Ff_X \hatimes_A \Tt_X) \ominus (A \hatimes_A
\Tt_X)$. Since $\pi'_1$ is a homomorphism, we have
\[
\pi'_1(T_\xi)^*\pi_1'(T_\eta)
    = \pi'_1(T^*_\xi T_\eta))
    = \pi'_1(\langle \xi, \eta\rangle_A)
    = \tilde{P}\tilde\varphi^\infty(\alpha_A(\langle \xi, \eta\rangle_A))\tilde{P}.
\]
For $\zeta, \zeta' \in X$ the range of $\tau(\zeta)$ is contained in
$A \hatimes_A \Tt_X \subseteq \Ff_X \hatimes_A \Tt_X$, which is orthogonal
to the range of $((\pi'_0\circ\alpha_\Tt) - \pi'_1)(T_{\zeta'})$.
Also, $((\pi'_0 \circ \alpha_\Tt) - \pi'_1)(T_\zeta) =
\pi_0'(\alpha_\Tt(T_\zeta))( 1 - \tilde P)$.
Using these two points, and resuming our computation of
$\psi_t(\xi)^*\psi_t(\eta)$ from above, we have
\begin{align*}
\big(\cos(\pi t/2)&(\pi'_0(\alpha_\Tt(T_\xi)) - \pi'_1(T_\xi)) + \sin(\pi t/2) \tau(\xi)\big)^*\\
    &\qquad\qquad
       \big(\cos(\pi t/2)(\pi'_0(\alpha_\Tt(T_\eta)) - \pi'_1(T_\eta)) + \sin(\pi t/2) \tau(\eta)\big)\\
    &= \cos(\pi t/2)^2 (\pi'_0(\alpha_\Tt(T_\xi)) (1 - \tilde P) )^*(\pi'_0(\alpha_\Tt(T_\eta)) (1 - \tilde P))
     + \sin(\pi t/2)^2 \tau(\xi)^*\tau(\eta)\\
    &= (1 - \tilde P)  ( \cos(\pi t/2)^2\tilde\varphi^\infty(\alpha_A(\langle \xi, \eta\rangle_A))
        + \sin(\pi t/2)^2\tilde\varphi^\infty(\alpha_A(\langle \xi, \eta\rangle_A)) ) (1 - \tilde P)  \\
    &= (1-\tilde{P}) \tilde\varphi^\infty(\alpha_A(\langle \xi, \eta\rangle_A)) (1-\tilde{P}).
\end{align*}
Since $\tilde{P}$ commutes with the range of $\tilde\varphi^\infty$, we have
$\psi_t(\xi)^*\psi_t(\eta) = \tilde\varphi^\infty(\alpha_A(\langle \xi, \eta\rangle_A))$
and so $(\tilde{\varphi}^\infty \circ \alpha_A, \psi_t)$ is
a Toeplitz representation of $X$ for each $t \in [0,1]$.
Thus the universal property of
$\Tt_X$ ensures that there exists a homomorphism $\pi'_t : \Tt_X \to \Ll(\Ff_X \hatimes_A
\Tt_X)$ such that $\pi'_t(T_\xi) = \psi_t(\xi)$ for $\xi \in X$ and $\pi'_t(a) =
\tilde\varphi^\infty(\alpha_A(a))$ for $a \in A$.

For $t \in[0,1]$ and $\xi \in X$, we have
\[
\pi'_t(T_\xi) - \pi'_1(T_\xi)
    = \big(\cos(\pi t/2) (\pi'_0(\alpha_\Tt(T_\xi)) - \pi'_1(T_\xi)) + \sin(\pi t/2) \tau(\xi)\big).
\]
The kernel of this operator contains $\tilde{P}(\Ff_X \hatimes_A \Tt_X)$, and since $A$
acts compactly on $(1-\tilde{P})(\Ff_X \hatimes_A \Tt_X)$, Cohen factorisation ensures
that $\pi'_t(T_\xi) - \pi'_1(T_\xi) \in \Kk(\Ff_X \hatimes_A \Tt_X)$. So for each $t$,
the homomorphism $\pi'_t$ is a compact perturbation of the homomorphism $\pi'_1$, which
determines a Kasparov module, and therefore~\eqref{eq:interpolating Kasmod} is a Kasparov
module for each $t$ as claimed.

The claim shows that the class~\eqref{eq:mustbe0} is equal to the class of
\[
\Big((\Ff_X \oplus \Ff_X) \hatimes_A \Tt_X, \pi_0' \oplus \pi_0' \circ \alpha_\Tt,
        \big(\begin{smallmatrix} 0&1\\1&0\end{smallmatrix}\big),
        \big(\begin{smallmatrix} \alpha^\infty_X \hatimes \alpha_\Tt&0\\
                             0&-\alpha^\infty_X \hatimes \alpha_\Tt\end{smallmatrix}\big) \Big).
\]
This is a degenerate Kasparov module (just calculate directly that $F^2 = 1$, $F^* = F$
and $\big[F, \big(\pi_0' \oplus \pi_0' \circ \alpha_\Tt)(a)\big]^{\operatorname{gr}} = 0$
for all $a \in \Tt_X$), so it represents the zero class. Hence $(i_A)_*[M] = \IdKK{\Tt_X}$.
\end{proof}

Let $\iota :  \Kk(\Ff_X) \to  \Ll(\Ff_X)$ denote the canonical inclusion. Then $(\Ff_X,
\iota, 0, \alpha_X^\infty)$ is a Kasparov module and we have  $[\Ff_X, \iota, 0,
\alpha_X^\infty] \in KK ( \Kk ( \Ff_X ) , A)$. As Pimsner points out, this is the
$KK$-equivalence given by the equivalence bimodule $\Ff_X$. Let $j: \Kk ( \Ff_X ) \to
\Tt_X$ be the natural inclusion.

\begin{lem}\label{lem:KKformula} (cf. \cite[Lemma~4.7]{P})
With notation as above we have
\[
[j] \hatimes_{\Tt_X} [M] = [ \Ff_X, \iota, 0, \alpha_X^\infty ]  \hatimes_A  (\IdKK{A} - [X])
\]
in $KK ( \Kk ( \Ff_X ) , A )$.
\end{lem}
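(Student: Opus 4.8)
The plan is to follow Pimsner's proof of \cite[Lemma~4.7]{P}, carrying the grading operators along. Since $[j]$ is induced by the inclusion homomorphism $j \colon \Kk(\Ff_X) \to \Tt_X$, Proposition~18.7.2(b) of \cite{B} identifies $[j] \hatimes_{\Tt_X} [M]$ with the pullback $j^*[M]$; using the representative~\eqref{eq:alt [M]} this is the class of
\[
\Big(\Ff_X \oplus (\Ff_X \ominus A),\ (\pi_0 \oplus \pi_1 \circ \alpha_\Tt)\circ j,\ \big(\begin{smallmatrix} 0&1\\P&0\end{smallmatrix}\big),\ \big(\begin{smallmatrix} \alpha^\infty_X&0\\0&-\alpha^\infty_X\end{smallmatrix}\big)\Big).
\]
The decisive structural observation is that $\Kk(\Ff_X)$ acts here by compact operators: $\pi_0(j(k)) = \iota(k)$ is compact, and since $\pi_0 - \pi_1$ is compact-valued (as in \cite[Lemma~4.2]{P}), $\pi_1(\alpha_\Tt(j(k)))$ is compact too. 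Consequently every Kasparov condition holds automatically with an arbitrary adjointable operator in place of $\big(\begin{smallmatrix} 0&1\\P&0\end{smallmatrix}\big)$, and the straight-line operator homotopy lets me replace that operator by $0$, decoupling the two summands. Thus $[j] \hatimes_{\Tt_X} [M]$ is the sum of the classes of $(\Ff_X, \iota, 0, \alpha^\infty_X)$ and $(\Ff_X \ominus A, \pi_1 \circ \alpha_\Tt \circ j, 0, -\alpha^\infty_X|_{\Ff_X \ominus A})$.

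I would next identify the two summands. The first is exactly $e := [\Ff_X, \iota, 0, \alpha^\infty_X]$, the class of the imprimitivity bimodule, and this equals $e \hatimes_A \IdKK{A}$. For the second summand I would show it equals $-(e \hatimes_A [X])$. Because the left action of $\Kk(\Ff_X)$ on the equivalence bimodule $\Ff_X$ is by compacts and $\varphi(A) \subseteq \Kk(X)$, the Kasparov product $e \hatimes_A [X]$ is represented with a zero connection, namely $[\Ff_X \hatimes_A X, \iota \hatimes 1, 0, \alpha^\infty_X \hatimes \alpha_X]$; applying the inverse formula~\eqref{eq:KasInverse} gives $-(e \hatimes_A [X]) = [\Ff_X \hatimes_A X, (\iota \hatimes 1)\circ\alpha_\Kk, 0, -(\alpha^\infty_X \hatimes \alpha_X)]$. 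Finally I would invoke the canonical isomorphism of graded correspondences $\Ff_X \hatimes_A X \cong \Ff_X \ominus A$, $\xi \hatimes \eta \mapsto \xi \hatimes_A \eta$, under which the grading $-(\alpha^\infty_X \hatimes \alpha_X)$ becomes $-\alpha^\infty_X|_{\Ff_X \ominus A}$, and check that the transported left action coincides with $\pi_1 \circ \alpha_\Tt \circ j$. Summing the two identifications yields $[j] \hatimes_{\Tt_X} [M] = e \hatimes_A \IdKK{A} - e \hatimes_A [X] = e \hatimes_A (\IdKK{A} - [X])$, as required.

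I expect this last verification to be the main obstacle. Transported through $\Ff_X \hatimes_A X \cong \Ff_X \ominus A$, the action $(\iota \hatimes 1)(\alpha_\Kk(k))$ is the operator $\xi \hatimes_A \eta \mapsto (\alpha_\Kk(k)\xi)\hatimes_A\eta$ acting on the initial tensor segment, whereas $\pi_1(\alpha_\Tt(j(k)))$ is defined through the second Fock representation of $\Tt_X$; showing these agree is precisely the graded refinement of the ungraded identity underlying Pimsner's lemma. This is where the grading must be handled carefully: the twist $\alpha_\Tt$ built into $\pi_1 \circ \alpha_\Tt$ has to match the twist $\alpha_\Kk$ produced by the Kasparov-inverse formula~\eqref{eq:KasInverse}, and the sign of the grading operator $-\alpha^\infty_X$ must line up with the grading $-(\alpha^\infty_X \hatimes \alpha_X)$. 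A secondary delicate point is the passage over the $A$-summand: the coupling operator $\big(\begin{smallmatrix} 0&1\\P&0\end{smallmatrix}\big)$ in~\eqref{eq:alt [M]} is exactly what reconciles the discrepancy of the $A$-summand between $\Ff_X$ and $\Ff_X \ominus A$, so I would take care to justify the reduction to the essential subspace and the subsequent homotopy of the operator before splitting the class into its two summands.
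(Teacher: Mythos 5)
Your proposal is correct and takes essentially the same approach as the paper's proof: pull $[M]$ back along $j$ using \cite[Proposition~18.7.2(b)]{B}, run the straight-line operator homotopy to $0$ (valid because $\pi_0\circ j$ and $\pi_1 \circ j$ take values in $\Kk(\Ff_X)$), split the resulting class into two summands, and identify the second with $-\big([\Ff_X,\iota,0,\alpha^\infty_X]\hatimes_A[X]\big)$ via the isomorphism $\Ff_X\hatimes_A X\cong \Ff_X\ominus A$ together with the inverse formula~\eqref{eq:KasInverse}. The only cosmetic difference is that you form the product $[\Ff_X,\iota,0,\alpha^\infty_X]\hatimes_A[X]$ first and then negate, whereas the paper negates first and then takes the product; since $(\iota\hatimes 1_X)\circ\alpha_\Kk=(\iota\circ\alpha_\Kk)\hatimes 1_X$ and $-(\alpha^\infty_X\hatimes\alpha_X)=(-\alpha^\infty_X)\hatimes\alpha_X$, both yield the same representative.
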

\begin{proof}
By \cite[Proposition~18.7.2(b)]{B} we have $[j] \hatimes_{\Tt_X} [M] = j^*[M]$. Using the
representation~\eqref{eq:alt [M]} of $[M]$ we therefore obtain
\[
[j] \hatimes_{\Tt_X} [M]
    = \Big[ \Ff_X \oplus (\Ff_X \ominus A), (\pi_0 \oplus \pi_1 \circ \alpha_\Tt)\circ j,
    \big(\begin{smallmatrix} 0&1\\P&0\end{smallmatrix}\big),
    \big(\begin{smallmatrix} \alpha^\infty_X&0\\0&-\alpha^\infty_X\end{smallmatrix}\big) \Big].
\]
Since $\pi_0 \circ j(\Kk(\Ff_X)) \subseteq \Kk(\Ff_X)$ and similarly for $\pi_1$, the
straight-line path from $\big(\begin{smallmatrix} 0&1\\P&0\end{smallmatrix}\big)$ to $0$
gives an operator homotopy, so
\begin{align*}
[j] \hatimes_{\Tt_X} [M]
    &= \Big[ \Ff_X \oplus (\Ff_X \ominus A), (\pi_0 \oplus \pi_1 \circ \alpha_\Tt)\circ j,
    0, \big(\begin{smallmatrix} \alpha^\infty_X&0\\0&-\alpha^\infty_X\end{smallmatrix}\big) \Big]\\
    &= \big[ \Ff_X, \pi_0 \circ j, 0, \alpha_X^\infty \big]
        + \big[ \Ff_X \ominus A, \pi_1 \circ j \circ \alpha_\Kk, 0, -\alpha_X^\infty \big]\\
    &= \big[ \Ff_X, \iota, 0, \alpha_X^\infty \big]
        + \big[ \Ff_X \ominus A, \pi_1 \circ j \circ \alpha_\Kk, 0, -\alpha_X^\infty \big].
\end{align*}
We have $\Ff_X \hatimes_A X \cong \Ff_X \ominus A$ as right-Hilbert modules, and this
isomorphism carries $\pi_0 \hatimes 1_X$ to $\pi_1$, and hence $(\pi_0 \circ j) \hatimes
1_X$ to $\pi_1 \circ j$. So
\[
(\Ff_X \ominus A, \pi_1 \circ j \circ \alpha_\Kk, 0, -\alpha_X^\infty)
     \cong \big(\Ff_X \hatimes_A X, (\pi_0 \circ j \circ \alpha_\Kk) \hatimes 1_X, 0, -\alpha_X^\infty \hatimes\alpha_X\big).
\]
The right-hand side represents $[\Ff_X, \iota \circ \alpha_\Kk, 0, -\alpha_X^\infty]
\hatimes_A [X]$. We have $[\Ff_X, \iota \circ \alpha_\Kk, 0, -\alpha_X^\infty] = -[\Ff_X,
\iota, 0, \alpha_X^\infty]$ by~\eqref{eq:KasInverse}. Thus
\[
[j] \hatimes_{\Tt_X} [M]
    = \big[ \Ff_X, \iota, 0, \alpha_X^\infty \big] - ([\Ff_X, \iota, 0, \alpha_X^\infty] \hatimes_A [X])
    = \big[ \Ff_X, \iota, 0, \alpha_X^\infty \big] \hatimes_A(\IdKK{A} - [X])
\]
as claimed.
\end{proof}

Finally, we obtain two six-term exact sequences as in \cite[Theorem~4.9]{P}. For the
purposes of computing graded $K$-theory, we are most interested in the first of the two
sequences, and in the situation where $B = \CC$; but both could be useful in general. We
write $i : A \to \Oo_X$ for the canonical inclusion.

\begin{thm}[{cf. \cite[Theorem~4.9]{P}}]\label{thm:6-term}
Let $A$ and $B$ be graded separable  $C^*$-algebras such that $A$ is nuclear and let $X$
be a graded correspondence over $A$ such that the left action is injective and by
compacts and $X$ is full. Then with notation as above we have two exact six-term
sequences as follows.
\begin{equation}\label{eq:exact1}
\parbox[c]{0.8\textwidth}{\hfill
\begin{tikzpicture}[yscale=0.8, >=stealth]
    \node (00) at (0,0) {$KK_1(B, \Oo_X)$};
    \node (40) at (4,0) {$KK_1(B, A)$};
    \node (80) at (8,0) {$KK_1(B, A)$};
    \node (82) at (8,2) {$KK_0(B, \Oo_X)$};
    \node (42) at (4,2) {$KK_0(B, A)$};
    \node (02) at (0,2) {$KK_0(B, A)$};
    \draw[->] (02)-- node[above] {${\scriptstyle{\hatimes_A (\IdKK{A} - [X])}}$} (42);
    \draw[->] (42)-- node[above] {${\scriptstyle i_*}$} (82);
    \draw[->] (82)--(80);
    \draw[->] (80)-- node[above] {${\scriptstyle{\hatimes_A (\IdKK{A} - [X])}}$} (40);
    \draw[->] (40)-- node[above] {${\scriptstyle i_*}$} (00);
    \draw[->] (00)--(02);
\end{tikzpicture}\hfill\hfill}
\end{equation}
\begin{equation}\label{eq:exact2}
\parbox[c]{0.8\textwidth}{\hfill
\begin{tikzpicture}[yscale=0.8, >=stealth]
    \node (00) at (0,0) {$KK_1(\Oo_X, B)$};
    \node (40) at (4,0) {$KK_1(A, B)$};
    \node (80) at (8,0) {$KK_1(A, B)$};
    \node (82) at (8,2) {$KK_0(\Oo_X, B)$};
    \node (42) at (4,2) {$KK_0(A, B)$};
    \node (02) at (0,2) {$KK_0(A, B)$};
    \draw[<-] (02)-- node[above] {${\scriptstyle{(\IdKK{A} - [X]) \hatimes_A}}$} (42);
    \draw[<-] (42)-- node[above] {${\scriptstyle i^*}$} (82);
    \draw[<-] (82)--(80);
    \draw[<-] (80)-- node[above] {${\scriptstyle{(\IdKK{A} - [X]) \hatimes_A}}$} (40);
    \draw[<-] (40)-- node[above] {${\scriptstyle i^*}$} (00);
    \draw[<-] (00)--(02);
\end{tikzpicture}\hfill\hfill}
\end{equation}
These sequences are, respectively, contravariantly and covariantly natural in $B$. They
also natural in the other variable in the following sense: if $C$ is a graded
$C^*$-algebra, and $Y_C$ is a full graded correspondence over $C$ whose left action is
injective and by compacts, and if $\theta_A : A \to C$ and $\theta_X : X \to Y$
constitute a morphism of $C^*$-correspondences, then $\theta_A$ and the induced
homomorphism $(\theta_A \times \theta_X) : \Oo_X \to \Oo_Y$ induce morphisms of exact
sequences from~\eqref{eq:exact1} for $(A,X)$ to~\eqref{eq:exact1} for $(C,Y)$ and
from~\eqref{eq:exact2} for $(C,Y)$ to~\eqref{eq:exact2} for $(A,X)$.
\end{thm}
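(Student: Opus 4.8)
The plan is to apply the standard six-term exact sequences in $KK$-theory to the Toeplitz extension and then use the $KK$-equivalences already in hand to rewrite the terms and the maps. Under our standing hypotheses $\Oo_X \cong \Tt_X/\Kk(\Ff_X)$, so we have a short exact sequence of graded $C^*$-algebras
\[
0 \to \Kk(\Ff_X) \xrightarrow{j} \Tt_X \xrightarrow{q} \Oo_X \to 0,
\]
in which $i = q \circ i_A$. First I would check that this extension is semisplit in the graded sense. Since $A$ is separable and nuclear, so are $\Tt_X$ and $\Oo_X$, and the Choi--Effros lifting theorem provides a completely positive contractive section $s$ of $q$; replacing $s$ by $\tfrac{1}{2}(s + \alpha_\Tt \circ s \circ \alpha_\Oo)$ produces a \emph{graded} completely positive contractive section. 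The graded six-term exact sequences in $KK$-theory (see \cite[\S19.5]{B}) then apply, giving, for every separable graded $B$, exact hexagons built from $\hatimes [j]$ and $\hatimes [q]$ in the variable $KK_*(B,-)$, and from $[j] \hatimes$ and $[q]\hatimes$ in the variable $KK_*(-,B)$, together with the usual (unlabelled) boundary maps.

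Next I would transport these sequences along the $KK$-equivalences. Write $[\Ff] := [\Ff_X, \iota, 0, \alpha_X^\infty] \in KK(\Kk(\Ff_X), A)$ for the Morita equivalence class, and recall from Theorem~\ref{thm:kk-equiv} that $[M] \in KK(\Tt_X, A)$ and $[i_A] \in KK(A, \Tt_X)$ are mutually inverse. Using $[\Ff]$ and $[M]$ (equivalently $[i_A]$) to identify both $KK_i(B, \Kk(\Ff_X))$ and $KK_i(B, \Tt_X)$ with $KK_i(B, A)$, the transported first map is computed by the composite $[\Ff]^{-1} \hatimes_{\Kk(\Ff_X)} [j] \hatimes_{\Tt_X} [M]$. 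By Lemma~\ref{lem:KKformula} we have $[j] \hatimes_{\Tt_X} [M] = [\Ff] \hatimes_A (\IdKK{A} - [X])$, so this composite collapses to $\IdKK{A} - [X]$ and the map becomes $\hatimes_A (\IdKK{A} - [X])$. For the second map, the factorisation $i = q \circ i_A$ gives $[i_A] \hatimes_{\Tt_X} [q] = [i]$, so under the identification of $KK_i(B,\Tt_X)$ with $KK_i(B,A)$ via $\hatimes [i_A]$ the map $\hatimes [q]$ becomes $i_*$. This yields~\eqref{eq:exact1}; running the identical computation with the Kasparov products on the left yields~\eqref{eq:exact2}.

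It remains to establish naturality. Contravariant (respectively covariant) naturality in $B$ is immediate, since the six-term $KK$-sequences are natural in the coefficient algebra and the transport isomorphisms are given by fixed Kasparov products. For naturality in the correspondence variable, a morphism $(\theta_A, \theta_X)$ induces, through the universal properties of the Fock space, Toeplitz and Cuntz--Pimsner constructions, compatible graded homomorphisms $\Ff_X \to \Ff_Y$, $\Kk(\Ff_X) \to \Kk(\Ff_Y)$, $\theta_\Tt : \Tt_X \to \Tt_Y$ and $\theta_A \times \theta_X : \Oo_X \to \Oo_Y$, and hence a morphism between the two Toeplitz extensions. Functoriality of the six-term sequences then gives a morphism of the untransported sequences, and the point is to check that this descends through the transport isomorphisms. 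This reduces to the naturality of the three transport classes, namely that the squares relating $[M_X]$ to $[M_Y]$, $[\Ff_X]$ to $[\Ff_Y]$, and $[X]$ to $[Y]$ (via $\theta_A$ and $\theta_\Tt$) commute in $KK$; concretely one needs $[\theta_\Tt] \hatimes_{\Tt_Y} [M_Y] = [M_X] \hatimes_A [\theta_A]$, and the analogous identities for $[\Ff]$ and for $[X]$.

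I expect this last step to be the main obstacle: verifying that the $KK$-equivalences of Theorem~\ref{thm:kk-equiv} and the factorisation of Lemma~\ref{lem:KKformula} are natural with respect to morphisms of correspondences. This requires revisiting the constructions of $[M]$ and $[\Ff]$, which are defined through the concrete representations $\pi_0,\pi_1$ on the Fock space, and checking that the induced map $\Ff_X \to \Ff_Y$ intertwines them compatibly with the homotopies used in those proofs. By contrast, the graded semisplit lifting and the identification of the two labelled maps are comparatively routine given the results already established.
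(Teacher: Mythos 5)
Your proposal follows essentially the same route as the paper's proof: apply the graded six-term $KK$-sequences (the paper uses Skandalis' \cite[Theorem~1.1]{S}) to the semisplit Toeplitz extension $0 \to \Kk(\Ff_X) \xrightarrow{j} \Tt_X \xrightarrow{q} \Oo_X \to 0$, whose splitting exists by nuclearity, and then transport the resulting hexagons along the $KK$-equivalences $[\Ff_X, \iota, 0, \alpha_X^\infty]$ and $[M]$/$[i_A]$ of Theorem~\ref{thm:kk-equiv}, using Lemma~\ref{lem:KKformula} to identify the first map with $\hatimes_A(\IdKK{A} - [X])$ and the factorisation $i = q \circ i_A$ to identify the second with $i_*$. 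Your two additional touches---averaging the completely positive section to make it graded, and noting that naturality in the correspondence variable reduces to naturality of the classes $[M]$, $[\Ff_X]$ and $[X]$ under morphisms of correspondences---are sound and, if anything, more explicit than the paper, which handles these points by assertion and by citing naturality of Pimsner's sequences and of the graded $KK$-functor.
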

\begin{proof}
We just prove exactness of the first diagram: the second follows from a similar argument.
Since $A$ is nuclear, so is $\Tt_X$ (see, for example, \cite[Theorem~6.3]{RRS}) and so
the quotient map $q :\Tt_X \to \Oo_X$ has a completely positive splitting. Hence
\cite[Theorem~1.1]{S} applied to the graded short exact sequence $0 \to \Kk(\Ff_X)
\stackrel{j}{\longrightarrow} \Tt_X \stackrel{q}{\longrightarrow} \Oo_X \to 0$ yields
homomorphisms $\delta : KK_i(B, \Oo_X) \to KK_{i+1}(B, \Kk(\Ff_X))$ for which the
following six-term sequence is exact.
\[
\begin{tikzpicture}[yscale=0.8]
    \node (00) at (0,0) {$KK_1(B, \Oo_X)$};
    \node (40) at (4,0) {$KK_1(B, \Tt_X)$};
    \node (80) at (8,0) {$KK_1(B, \Kk(\Ff_X) )$.};
    \node (82) at (8,2) {$KK_0(B, \Oo_X)$};
    \node (42) at (4,2) {$KK_0(B, \Tt_X)$};
    \node (02) at (0,2) {$KK_0(B, \Kk(\Ff_X))$};
    \draw[-stealth] (02)-- node[above] {${\scriptstyle j_*}$} (42);
    \draw[-stealth] (42)-- node[above] {${\scriptstyle q_*}$} (82);
    \draw[-stealth] (82)-- node[right] {${\scriptstyle \delta}$} (80);
    \draw[-stealth] (80)-- node[above] {${\scriptstyle j_*}$} (40);
    \draw[-stealth] (40)-- node[above] {${\scriptstyle q_*}$} (00);
    \draw[-stealth] (00)-- node[left] {${\scriptstyle \delta}$}(02);
\end{tikzpicture}
\]
Define maps $\delta' : KK_*(B, \Oo_X) \to KK_{*+1}(B, A)$ by $\delta' = (\cdot \hatimes
[\Ff_X, \iota, 0, \alpha_X^\infty]) \circ \delta$, and consider the following diagram.
\[
\begin{tikzpicture}[yscale=0.8]
    \node (00) at (0,0) {$KK_1(B, \Oo_X)$};
    \node (40) at (4,0) {$KK_1(B, \Tt_X)$};
    \node (80) at (8,0) {$KK_1(B, \Kk(\Ff_X) )$};
    \node (82) at (8,2) {$KK_0(B, \Oo_X)$};
    \node (42) at (4,2) {$KK_0(B, \Tt_X)$};
    \node (02) at (0,2) {$KK_0(B, \Kk(\Ff_X))$};
    \draw[-stealth] (02)-- node[above] {${\scriptstyle j_*}$} (42);
    \draw[-stealth] (42)-- node[above] {${\scriptstyle q_*}$} (82);
    \draw[-stealth] (82)-- node[right] {${\scriptstyle \delta}$} (80);
    \draw[-stealth] (80)-- node[below] {${\scriptstyle j_*}$} (40);
    \draw[-stealth] (40)-- node[below] {${\scriptstyle q_*}$} (00);
    \draw[-stealth] (00)-- node[left] {${\scriptstyle \delta}$} (02);
    \node (00') at (-2,-2) {$KK_1(B, \Oo_X)$};
    \node (40') at (4,-2) {$KK_1(B, A)$};
    \node (80') at (10,-2) {$KK_1(B, A)$};
    \node (02') at (-2,4) {$KK_0(B, A)$};
    \node (42') at (4,4) {$KK_0(B, A)$};
    \node (82') at (10,4) {$KK_0(B, \Oo_X)$};
    \draw[-stealth] (02')-- node[above] {${\scriptstyle \hatimes (\IdKK{A} - [X])}$} (42');
    \draw[-stealth] (42')-- node[above] {${\scriptstyle i_*}$} (82');
    \draw[-stealth] (82')-- node[right] {${\scriptstyle \delta'}$} (80');
    \draw[-stealth] (80')-- node[below] {${\scriptstyle \hatimes (\IdKK{A} - [X])}$} (40');
    \draw[-stealth] (40')-- node[below] {${\scriptstyle i_*}$} (00');
    \draw[-stealth] (00')-- node[left] {${\scriptstyle \delta'}$} (02');
    \draw[-stealth] (02)--(02') node[pos=0.25, anchor=south west, inner sep=0pt] {$\scriptstyle \hatimes [\Ff_X, \iota, 0, \alpha_X^\infty]$};
    \draw[-stealth, out=75, in=285] (42) to node[pos=0.5, right] {${\scriptstyle[M]}$} (42');
    \draw[-stealth, out=255, in=105] (42') to node[pos=0.5, left] {${\scriptstyle (i_A)_*}$} (42);
    \draw[-stealth] (82') to node[anchor=south east, inner sep=1pt] {$\scriptstyle\id$} (82);
    \draw[-stealth] (80)--(80') node[pos=0.25,  anchor=north east, inner sep=0pt] {$\scriptstyle \hatimes [\Ff_X, \iota, 0, \alpha_X^\infty]$};
    \draw[-stealth, out=255, in=105] (40) to node[pos=0.5, left] {${\scriptstyle[M]}$} (40');
    \draw[-stealth, out=75, in=285] (40') to node[pos=0.5, right] {${\scriptstyle (i_A)_*}$} (40);
    \draw[-stealth] (00') to node[anchor=north west, inner sep=1pt] {$\scriptstyle\id$} (00);
\end{tikzpicture}
\]
The left-hand and right-hand squares commute by definition of the maps $\delta'$.
Lemma~\ref{lem:KKformula} implies that the top left and bottom right squares commute.
Since $q \circ i_A = i$ as homomorphisms, we have $q_* \circ (i_A)_* = i_*$, and so the
top right and bottom left squares commute as well. Since all the maps linking the inner
rectangle to the outer rectangle are isomorphisms, it follows that the outer rectangle is
exact as required.

Naturality follows from naturality of Pimsner's exact sequences, which in turn follows
from naturality of the $KK$-functor for graded $C^*$-algebras \cite[Section~17.8]{B}.
\end{proof}

\begin{cor} \label{cor:6-termgraded}
Let $(A, \alpha)$ be a separable, nuclear graded $C^*$-algebra and let $(X,\alpha_X)$ be
a countably generated, graded correspondence over $A$ such that the left action is
injective and by compacts and $X$ is full. Then there is a 6-term exact sequence for
graded $K$-theory as follows.
\begin{equation} \label{eq:6-termgraded}
\parbox[c]{10.6cm}{\begin{tikzpicture}[yscale=0.8, >=stealth]
    \node (00) at (0,0) {$\Kgr_1( \Oo_X , \alpha_{\Oo} )$};
    \node (40) at (4,0) {$\Kgr_1(A,\alpha)$};
    \node (80) at (8,0) {$\Kgr_1(A,\alpha)$};
    \node (82) at (8,2) {$\Kgr_0(\Oo_X,\alpha_{\Oo})$};
    \node (42) at (4,2) {$\Kgr_0(A,\alpha)$};
    \node (02) at (0,2) {$\Kgr_0(A,\alpha)$};
    \draw[->] (02)-- node[above] {${\scriptstyle{\hatimes_A (\IdKK{A} - [X])}}$} (42);
    \draw[->] (42)-- node[above] {${\scriptstyle i_*}$} (82);
    \draw[->] (82)--(80);
    \draw[->] (80)-- node[above] {${\scriptstyle{\hatimes_A (\IdKK{A} - [X])}}$} (40);
    \draw[->] (40)-- node[above] {${\scriptstyle i_*}$} (00);
    \draw[->] (00)--(02);
\end{tikzpicture}}
\end{equation}
\end{cor}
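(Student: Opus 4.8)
The plan is to obtain this corollary as the special case $B = \CC$ of the first exact sequence~\eqref{eq:exact1} in Theorem~\ref{thm:6-term}. First I would record that $\CC$, carrying its trivial grading, is a separable nuclear graded $C^*$-algebra, so it is an admissible choice for the variable $B$, and that the pair $(A, X)$ in the hypotheses of the corollary satisfies the standing assumptions required by Theorem~\ref{thm:6-term}: $A$ is separable and nuclear, and $X$ is a full, countably generated graded correspondence over $A$ whose left action is injective and by compacts. Thus the theorem applies and produces the six-term sequence~\eqref{eq:exact1} with $B = \CC$.

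The substance of the argument is then a translation of notation via Definition~\ref{dfn:kgr}. For any separable graded $C^*$-algebra $D$ we have $KK_0(\CC, D) = KK(\CC, D) = \Kgr_0(D)$ and $KK_1(\CC, D) = KK(\CC, D \hatimes \Cliff1) = \Kgr_1(D)$. Applying these identifications with $D = (A, \alpha)$ and with $D = (\Oo_X, \alpha_\Oo)$, where $\alpha_\Oo$ is precisely the grading on $\Oo_X$ constructed in the setup of this section, converts each group occurring in~\eqref{eq:exact1} into the corresponding graded $K$-group appearing in~\eqref{eq:6-termgraded}.

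Finally I would check that the maps agree. The horizontal arrows of~\eqref{eq:exact1} are the pushforward $i_*$ along the canonical inclusion $i : A \to \Oo_X$ and right Kasparov multiplication by $\IdKK{A} - [X]$, which are exactly the arrows displayed in~\eqref{eq:6-termgraded}. The one point needing care is that on the degree-one rows the symbol $\hatimes_A(\IdKK{A} - [X])$ denotes the action on $KK_1(\CC, D) = KK(\CC, D \hatimes \Cliff1)$, namely right multiplication by $(\IdKK{A} - [X]) \hatimes \IdKK{\Cliff1}$. I expect this to be the only mild obstacle: one must confirm that this is genuinely how the $KK(A,A)$-module structure on the degree-one groups is defined, so that the arrow labels in~\eqref{eq:6-termgraded} are consistent with those inherited from~\eqref{eq:exact1}. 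Since that compatibility is built into the definition of $KK_1$, no new computation is required, and the corollary follows as an immediate specialization of Theorem~\ref{thm:6-term}.
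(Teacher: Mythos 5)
Your proposal is correct and matches the paper's (implicit) argument exactly: the paper states this corollary with no separate proof, precisely because it is the specialization $B = \CC$ of the sequence~\eqref{eq:exact1} in Theorem~\ref{thm:6-term}, combined with the identifications $KK_0(\CC, D) = \Kgr_0(D)$ and $KK_1(\CC, D) = KK(\CC, D \hatimes \Cliff1) = \Kgr_1(D)$ from Definition~\ref{dfn:kgr}. Your additional care about the degree-one arrows acting as $(\IdKK{A} - [X]) \hatimes \IdKK{\Cliff1}$ is a sound observation, built into the definition of $KK_1$ just as you say.
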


\subsection*{The Pimsner--Voiculescu exact sequence for graded \texorpdfstring{$C^*$}{C*}-algebras}

If $(A, \alpha)$ is a graded $C^*$-algebra, and $\gamma$ is an automorphism of $A$ that
is graded in the sense that it commutes with $\alpha$, then functoriality of $KK$ shows
that $\gamma$ induces a map $\gamma_0$ on $\Kgr_0(A,\alpha) = KK(\CC, A)$ and $\gamma_1$
on $\Kgr_1(A, \alpha) = KK(\CC , A \hatimes \Cliff1)$.

The crossed product $A \rtimes_\gamma \ZZ$ has two natural grading automorphisms, which
we will denote by $\beta^0$ and $\beta^1$. To describe them, write $i_A : A \to A
\rtimes_\gamma \ZZ$ and $i_\ZZ : \ZZ \to \mathcal{U}\Mm(A \rtimes_\gamma \ZZ)$ for the
canonical inclusions. Then for $k \in \ZZ_2$, the automorphism $\beta^k$ is characterised
by
\begin{equation}\label{eq:beta-k}
\beta^k (i_A(a)i_\ZZ(n)) = (-1)^{kn} i_A(\alpha(a)) i_\ZZ(n).
\end{equation}
So $\beta^1 = \beta^0 \circ \hat{\gamma}_{-1}$ where $\hat{\gamma}$ is the action of
$\TT$ on the crossed product dual to $\gamma$. The inclusion $i_A : A \to A
\rtimes_\gamma \ZZ$ is a graded homomorphism with respect to $\alpha$ and $\beta^k$ for
each of $k = 0, 1$.

\begin{cor}[Graded Pimsner--Voiculescu sequence] \label{cor:gradedPV}
Let $(A, \alpha)$ be a separable, nuclear graded $C^*$-algebra and $\gamma$ an
automorphism of $A$ that commutes with $\alpha$. Fix $k \in \{0,1\}$ and let $\beta^k$ be
the grading automorphism of $A \rtimes_\gamma \ZZ$ described above. Then we obtain a
six-term exact sequence in graded $K$-theory as follows.
\begin{equation}\label{eq:gradedPV}
\parbox[c]{0.8\textwidth}{\hfill
\begin{tikzpicture}[yscale=0.8, >=stealth]
    \node (00) at (0,0) {$\Kgr_1(A \rtimes_\gamma \ZZ, \beta^k)$};
    \node (40) at (4,0) {$\Kgr_1(A, \alpha)$};
    \node (80) at (8,0) {$\Kgr_1(A, \alpha)$};
    \node (82) at (8,2) {$\Kgr_0(A \rtimes_\gamma \ZZ, \beta^k)$};
    \node (42) at (4,2) {$\Kgr_0(A, \alpha)$};
    \node (02) at (0,2) {$\Kgr_0(A, \alpha)$};
    \draw[->] (02)-- node[above] {${\scriptstyle{\id - (-\alpha_*)^k\gamma_*}}$} (42);
    \draw[->] (42)-- node[above] {${\scriptstyle i_*}$} (82);
    \draw[->] (82)--(80);
    \draw[->] (80)-- node[above] {${\scriptstyle{\id - (-\alpha_*)^k\gamma_*}}$} (40);
    \draw[->] (40)-- node[above] {${\scriptstyle i_*}$} (00);
    \draw[->] (00)--(02);
\end{tikzpicture}\hfill\hfill}\end{equation}
The sequence is natural in the sense that if $(B, \kappa)$ is another graded
$C^*$-algebra, $\theta$ is an automorphism of $B$ that commutes with $\kappa$, and $\phi
: A \to B$ is a graded homomorphism that intertwines $\gamma$ and $\theta$, then $\phi$
and $\phi \times 1 : A \rtimes_\gamma \ZZ \to B \rtimes_\delta \ZZ$ induce a morphism of
exact sequences from~\eqref{eq:gradedPV} for $(A,\alpha)$ to~\eqref{eq:gradedPV} for $(B,
\kappa)$.
\end{cor}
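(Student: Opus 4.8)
The plan is to realise $(A \rtimes_\gamma \ZZ, \beta^k)$ as the Cuntz--Pimsner algebra of a suitably graded correspondence and then invoke Corollary~\ref{cor:6-termgraded}. I would take $X := {_\gamma A}$, the $A$--$A$-correspondence on $A_A$ whose left action is $a \cdot x = \gamma(a)x$, and equip it with the grading operator $\alpha_X := (-1)^k \alpha$. First I would verify the hypotheses of Corollary~\ref{cor:6-termgraded}: the left action is injective and by compacts because $\gamma$ is an automorphism and $A \cong \Kk(A_A)$; it is essential since $\gamma$ is surjective and $A$ has an approximate identity; $X$ is full and countably generated because $A$ is separable; and a direct check, using $\alpha\gamma = \gamma\alpha$, shows that $(-1)^k\alpha$ satisfies $\alpha_X^2 = 1$, intertwines the bimodule structure (where both coefficient actions are graded by $\alpha$), and is compatible with the inner product.

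Next I would identify $\Oo_X$ with $A \rtimes_\gamma \ZZ$ as graded $C^*$-algebras. The ungraded isomorphism is recorded in Section~\ref{sec:background}: it sends $i_A(a)$ to $\iota(a)$ and $i_X(\xi)$ to $U\iota(\xi)$, where $\iota : A \hookrightarrow A \rtimes_\gamma \ZZ$ is the canonical inclusion and $U = i_\ZZ(1)$. To see that the induced grading $\alpha_\Oo$ matches $\beta^k$, I would evaluate both on generators: on $X^{\hatimes 0} = A$ the operator $\alpha_X^{\hatimes 0}$ is just $\alpha$, so $\alpha_\Oo(\iota(a)) = \iota(\alpha(a))$; and $\alpha_\Oo(U\iota(\xi)) = (-1)^k U\iota(\alpha(\xi))$ because $\alpha_X^{\hatimes 1} = (-1)^k\alpha$. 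Rewriting $U\iota(\xi) = i_A(\gamma(\xi))\, i_\ZZ(1)$ via the covariance relation $U i_A(\xi) = i_A(\gamma(\xi))U$ and comparing with the defining formula~\eqref{eq:beta-k} then confirms $\alpha_\Oo = \beta^k$.

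The heart of the argument is the identification of the connecting map $\hatimes_A(\IdKK{A} - [X])$ with $\id - (-\alpha_*)^k\gamma_*$. For $k = 0$ we have $[X] = [A_A, \gamma, 0, \alpha]$, which is precisely the class of the graded homomorphism $\gamma$, so the product rule for classes of graded homomorphisms recorded in Section~\ref{sec:background} gives $w \hatimes_A [X] = \gamma_*(w)$. For $k = 1$ the relevant class is $[X] = [A_A, \gamma, 0, -\alpha]$, with left action $\gamma$ and grading operator $-\alpha$. Applying the inverse formula~\eqref{eq:KasInverse} to the module $(A_A, \gamma\circ\alpha, 0, \alpha)$, whose post-composition $(\gamma\circ\alpha)\circ\alpha = \gamma$ and negated grading $-\alpha$ reproduce $X$, yields $[X] = -[A_A, \gamma\circ\alpha, 0, \alpha] = -[\gamma\circ\alpha]$, so that $w\hatimes_A[X] = -(\gamma\circ\alpha)_*(w) = -\alpha_*\gamma_*(w)$. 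In both cases $\hatimes_A[X] = (-\alpha_*)^k\gamma_*$, giving the stated map; the identical computation tensored with $\Cliff1$ handles the $\Kgr_1$ row, and $\alpha_*$ and $\gamma_*$ commute because $\alpha$ and $\gamma$ do.

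Finally, naturality would follow from the naturality clause of Theorem~\ref{thm:6-term}. Given $\phi$ graded and intertwining $\gamma$ and $\theta$, I would set $\theta_A = \theta_X = \phi$ and check that $(\phi,\phi)$ is a morphism of graded correspondences ${_\gamma A} \to {_\theta B}$: compatibility with the right action and the inner product is immediate, compatibility with the left action is exactly $\phi\circ\gamma = \theta\circ\phi$, and compatibility with the gradings $(-1)^k\alpha$ and $(-1)^k\kappa$ is exactly the hypothesis that $\phi$ is graded. The induced homomorphism $\Oo_{_\gamma A}\to\Oo_{_\theta B}$ is then $\phi\times 1$ under the identifications above, so Theorem~\ref{thm:6-term} delivers the required morphism of exact sequences. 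I expect the main obstacle to be the step of the previous paragraph that pins down $\alpha_\Oo$ and matches the parity factor $(-1)^{kn}$ in~\eqref{eq:beta-k} with the choice $\alpha_X = (-1)^k\alpha$, together with the sign bookkeeping in the $k=1$ case, where the inverse formula~\eqref{eq:KasInverse} must be applied to the correct auxiliary module.
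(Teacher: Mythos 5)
Your proposal is correct and follows essentially the same route as the paper: take $X = {}_\gamma A$ graded by $(-1)^k\alpha$, identify $(\Oo_X,\alpha_\Oo)$ with $(A\rtimes_\gamma\ZZ,\beta^k)$ via Pimsner's isomorphism, feed this into Corollary~\ref{cor:6-termgraded}, show $\hatimes_A[X] = (-\alpha_*)^k\gamma_*$, and deduce naturality from Theorem~\ref{thm:6-term}. The only cosmetic difference is in the sign bookkeeping: you apply~\eqref{eq:KasInverse} directly to the auxiliary module $(A_A,\gamma\circ\alpha,0,\alpha)$, whereas the paper pushes $[X]$ forward along $\alpha^k$ and invokes~\eqref{eq:special Kas neg}; the two computations are equivalent.
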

\begin{proof}
Let $X := {}_{\gamma} A$ as a Hilbert module, endowed with the grading $(-1)^k \alpha$.
Write $(i_A, i_X)$ for the inclusions of $A$ and $X$ in $\Oo_X$, write $j_A : A \to A
\rtimes_\gamma \ZZ$ for the canonical inclusion, and write $U$ for the unitary element of
$\Mm(A \rtimes_\gamma \ZZ)$ implementing $\gamma$. Then, as pointed out in
\cite[Example~3, p.193]{P}, there is an isomorphism $\rho : \Oo_X \to A \rtimes_\gamma
\ZZ$ such that $\rho(i_A(a)) = j_A(a)$ and $\rho(i_X(a)) = U j_A(a)$ for all $a \in A$.
It is routine to check that this isomorphism is graded with respect to the grading
$\beta^k$ of $A \rtimes_\gamma \ZZ$ and the grading $\alpha_\Oo$ of $\Oo_X$ induced by
the grading $\alpha$ of $A$ and the grading $(-1)^k\alpha$ of $X$.

Hence Corollary~\ref{cor:6-termgraded} gives an exact sequence
\begin{equation}
\parbox[c]{0.9\textwidth}{\mbox{}\hfill\begin{tikzpicture}[yscale=0.8, >=stealth]
    \node (00) at (0,0) {$\Kgr_1(A \rtimes_\gamma \ZZ , \beta^k)$};
    \node (40) at (4,0) {$\Kgr_1(A,\alpha)$};
    \node (80) at (8,0) {$\Kgr_1(A,\alpha)$};
    \node (82) at (8,2) {$\Kgr_0(A \rtimes_\gamma \ZZ , \beta^k)$};
    \node (42) at (4,2) {$\Kgr_0(A,\alpha)$};
    \node (02) at (0,2) {$\Kgr_0(A,\alpha)$};
    \draw[->] (02)-- node[above] {${\scriptstyle{\hatimes_A (\IdKK{A} - [X])}}$} (42);
    \draw[->] (42)-- node[above] {${\scriptstyle i_*}$} (82);
    \draw[->] (82)--(80);
    \draw[->] (80)-- node[above] {${\scriptstyle{\hatimes_A (\IdKK{A} - [X])}}$} (40);
    \draw[->] (40)-- node[above] {${\scriptstyle i_*}$} (00);
    \draw[->] (00)--(02);
\end{tikzpicture}\hfill\mbox{}}
\end{equation}

By definition, we have $\alpha^k_*[X] = [A_A, \gamma \circ \alpha^k, 0, (-1)^k\alpha]$.
So~\eqref{eq:special Kas neg} shows that $\alpha^k_*[X] = (-1)^k \gamma_*$. Since
$\alpha$---and hence $\alpha_*$---has order 2, we deduce that $[X] = (-\alpha_*)^k
\gamma_*$, giving the desired six-term exact sequence.

Naturality follows immediately from naturality in Theorem~\ref{thm:6-term}.
\end{proof}

\section{Twisted \texorpdfstring{$P$}{P}-graph \texorpdfstring{$C^*$}{C*}-algebras and actions by countable
groups}\label{sec:P-graphs}

We now begin our investigation of how to use $P$-graphs to construct examples of graded
$C^*$-algebras. Let $F$ be a countable (discrete) abelian group, fix $k \ge 0$ and let $P
:= \NN^k \times F$ regarded as a cancellative abelian monoid and let $G_P$ denote the
Grothendieck group of $P$. Following \cite[Definition 2.1]{CKSS}, a $P$-graph consists of
a countable small category $\Lambda$ equipped with a functor $d : \Lambda \to P$
satisfying the factorisation property: if $d(\lambda)=p+q$ then there exist unique
$\mu,\nu \in \Lambda$ with $d(\mu)=p$, $d(\nu)=q$ and $\lambda = \mu \nu$. We say that
$\Lambda$ is \emph{row-finite} if
\[
v \Lambda^p :=  \{\lambda \in \Lambda : r(\lambda) = v, d(\lambda) = p\}
    \text{ is finite for $v \in \Lambda^0$ and $p \in P$,}
\]
and that it has \emph{no sources} if $v\Lambda^p$ is  nonempty for every $v \in
\Lambda^0$ and $p \in P$.

There is a natural pre-order on $P$ given by $p \le q$ if $q=p+u$ for some $u \in P$.
Note that $\le$ need not be a partial order: if $F$ is nontrivial, then $\le$ is not
antisymmetric.

\begin{examples} \label{ex:pex}
\begin{enumerate}[(i)]
\item Let $\Omega_P = \{ (p,q) \in P \times P : p \le q \}$. Regarding $P$ as a
    subsemigroup of $\ZZ^k \times F$, we can define $d : \Omega_P \to P$ by the
    expression $d(p,q) = q - p$. Identify $\Omega^0_P := d^{-1}(0)$ with $P$ via
    $(p,p) \mapsto p$, and define $r,s : \Omega_P \to \Omega_P^0$ by $r(p,q) = p$ and
    $s(p,q) = q$. Finally define composition by $(p,q)(q,n) = (p,n)$. Then $\Omega_P$
    is a $P$-graph.
\item When $P$ is regarded as a category with one object it becomes a $P$-graph with
    degree map the identity map, composition the group operation, and range and
    source both given by the trivial map $p \mapsto 0$.
\item Every $k$-graph is a $\NN^k$-graph.
\item In particular, when $P = \NN^k$ in Example~(ii), we obtain the $k$-graph with
    one vertex and one path of each degree in $\NN^k$. As is standard \cite{KPS3,
    KPS4}, we denote this $k$-graph by $T_k$; its $C^*$-algebra is isomorphic to
    $C(\TT^k)$.
\item Another example we shall use frequently is the $1$-graph $B_n$ with one vertex
    and $n$ distinct edges, whose $C^*$-algebra is the Cuntz algebra $\Oo_n$. The
    ``B" here stands for ``bouquet" and we sometimes refer to $B_n$ as the ``bouquet
    of $n$ loops."
\end{enumerate}
\end{examples}

The definition of the categorical cohomology of a $k$-graph given in \cite[\S3]{KPS4}
applies to $P$-graphs and we use the formalism and notation from there. In detail, let
$A$ be an abelian group and $\Lambda^{*r}$ the collection of composable $r$-tuples of
elements of $\Lambda$.
Then $Z^2(\Lambda, A)$, the group of normalised $2$-cocycles on $\Lambda$,
consists of all functions  $f:  \Lambda^{*2} \to A$ such that
\[
    f(\lambda,\mu) + f(\lambda\mu, \nu)
    = f(\mu, \nu) + f(\lambda, \mu\nu)
\]
for all $(\lambda,\mu,\nu) \in \Lambda^{*3}$  and  $f(r(\lambda),\lambda) = 0 = f(\lambda, s(\lambda))$
for all $\lambda \in \Lambda$ (cf.\ \cite[Lemma 3.8]{KPS4}).
Furthermore $f_1 , f_2 \in Z^2 ( \Lambda , A )$ are \textit{cohomologous} if they differ by a
coboundary: that is there is a map  $b : \Lambda \to A$ such that $(f_1 - f_2)(\lambda ,
\mu) = (\delta^1 b)(\lambda, \mu)  = b(\lambda) - b(\lambda\mu) + b(\mu)$ for all
$(\lambda, \mu) \in \Lambda^{*2}$.
As usual, when $A = \TT$ the group operation is written multiplicatively.

The following example of a $2$-cocycle on $\ZZ_2^l$ will prove important later.

\begin{example}\label{eg:kappa twists}
Let $A = \ZZ^l_2$ for some $l \ge 1$. Let $Z^2(A, \ZZ_2)$ be the group of $\ZZ_2$-valued
group $2$-cocycles on $A$. There is a natural map from $Z^2(A, \ZZ_2)$ to $Z^2(A, \TT)$
given as follows: for any $\kappa \in Z^2(A, \ZZ_2)$ define $c_\kappa \in Z^2(A, \TT)$ by
\[
    c_\kappa(m,n)= (-1)^{\kappa(m,n)} \text{ for } (m,n) \in A \times A.
\]

For example, consider $\kappa : A \times A \to \ZZ_2$ given by
\begin{equation} \label{eq:cdef}
    \kappa(m,n) = \sum_{1 \le j < i \le l} m_i \cdot n_j , \text{ where } m_i ,n_j \in \ZZ_2 .
\end{equation}

Then $\kappa \in Z^2(A, \ZZ_2)$. Indeed, $\kappa$ is biadditive and on pairs $(e_i,
e_j)$ of generators of $\ZZ_2^l$, it satisfies $\kappa(e_i, e_j) = 1 \in \ZZ_2$ if $j <
i$ and $\kappa(e_i, e_j) = 0 \in \ZZ_2$ if $i \le j$.

Let $\sigma$ be a permutation of $\{1, \dots, l\}$. Define $(m^\sigma)_i= m_{\sigma (i)}$
for $m \in A$. Then $m \to m^\sigma$ is an automorphism of $A$, and then for $\kappa \in
Z^2(A, \ZZ_2)$ we may form the $2$-cocycle $\kappa^\sigma \in Z^2(A, \ZZ_2)$, by
$\kappa^\sigma(m,n) = \kappa(m^\sigma, n^\sigma)$ for $(m,n) \in A \times A$. We then
have
\[
    c_{\kappa^\sigma}(m,n)  
    = \prod_{j < i} (-1)^{m_{\sigma(i)} n_{\sigma(j)}}.
\]
\end{example}

If $b : A \to \TT$ is a function, then $\delta^1 b$ is the associated $2$-coboundary
given by $\delta^1 b(m, n) = b(m) b(m+n)^{-1} b(n)$.

\begin{lem} \label{lem:permute}
With notation as above $c_\kappa$ is cohomologous to $c_{\kappa^\sigma}$ in $Z^2 ( A ,
\TT )$. Hence there is a map $b : A \to \TT$ such that
$c_{\kappa^\sigma} = c_\kappa\delta^1 b$.
\end{lem}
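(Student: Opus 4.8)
The plan is to work throughout with the $\ZZ_2$-valued bilinear forms underlying the sign-cocycles and to produce the required $b$ explicitly. The conceptual reason the result holds is that over $A = \ZZ_2^l$ the class in $Z^2(A,\TT)$ of a sign-cocycle $(m,n)\mapsto(-1)^{\beta(m,n)}$ attached to a bilinear form $\beta$ is governed by the antisymmetric bicharacter $(m,n)\mapsto(-1)^{\beta(m,n)+\beta(n,m)}$, and this is manifestly unchanged when $\kappa$ is replaced by $\kappa^\sigma$ (a permutation of the coordinates does not alter $\sum_{i\neq j}m_i n_j$). Since every coboundary is symmetric, because $\delta^1 b(m,n)=b(m)b(m+n)^{-1}b(n)=\delta^1 b(n,m)$ as $A$ is abelian, the whole problem reduces to showing that the \emph{symmetric} form $\rho:=\kappa^\sigma-\kappa$ (computed in $\ZZ_2$) satisfies $(-1)^{\rho}=\delta^1 b$ for some $b$; then $c_{\kappa^\sigma}=c_\kappa\cdot(-1)^{\rho}=c_\kappa\,\delta^1 b$ as required.

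First I would compute $\rho$ by reindexing. Putting $a=\sigma(i)$, $b=\sigma(j)$ in the defining sum gives $\kappa^\sigma(m,n)=\sum_{\sigma^{-1}(b)<\sigma^{-1}(a)}m_a n_b$, whereas $\kappa(m,n)=\sum_{b<a}m_a n_b$. Subtracting in $\ZZ_2$, the coefficient of $m_a n_b$ equals $1$ exactly when the ordered pair $(a,b)$ is ranked oppositely by the identity order and by $\sigma^{-1}$; this condition is symmetric under $a\leftrightarrow b$ and never holds when $a=b$. Consequently
\[
\rho(m,n)=\sum_{\{a,b\}\ \mathrm{inverted}}(m_a n_b+m_b n_a),
\]
where the sum runs over the unordered pairs inverted by $\sigma$. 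The key point here is that the diagonal terms $m_a n_a$ all cancel, which is what lets $b$ be chosen $\{\pm 1\}$-valued.

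Next I would exhibit $b$. For a single inverted pair I claim $(-1)^{m_a n_b+m_b n_a}=\delta^1 b_{a,b}(m,n)$ with $b_{a,b}(m):=(-1)^{m_a m_b}$: expanding $\delta^1 b_{a,b}(m,n)=(-1)^{m_a m_b+(m_a+n_a)(m_b+n_b)+n_a n_b}$ and reducing the exponent mod $2$ leaves precisely $m_a n_b+m_b n_a$. Because $\delta^1$ is multiplicative, setting $b(m):=(-1)^{\sum_{\{a,b\}\ \mathrm{inverted}}m_a m_b}$ gives $\delta^1 b=(-1)^{\rho}$, and hence $c_{\kappa^\sigma}=c_\kappa\,\delta^1 b$ with $b\colon A\to\{\pm 1\}\subseteq\TT$; this also proves the first assertion that $c_\kappa$ and $c_{\kappa^\sigma}$ are cohomologous.

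The main obstacle is purely bookkeeping: correctly carrying out the change of index and confirming that the diagonal contributions cancel. This cancellation is what makes the construction elementary. (Had a term $m_a n_a$ survived in $\rho$ it would still be a coboundary, but only for a genuinely $\TT$-valued $b$, for instance $b(m)=i^{m_a}$, since $(m,n)\mapsto(-1)^{m_a n_a}$ is a nontrivial class in $H^2(\ZZ_2,\ZZ_2)$ that becomes trivial only after enlarging the coefficients to $\TT$.) Once the reindexing is handled, the only remaining computations are the routine mod-$2$ expansions indicated above.
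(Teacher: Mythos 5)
Your proof is correct, but it takes a genuinely different route from the paper's. Both arguments start from the same observation, namely that the antisymmetrized bicharacter $\chi(m,n)=c(m,n)c(n,m)^{-1}$ is unchanged when $\kappa$ is replaced by $\kappa^\sigma$; but at that point the paper simply invokes \cite[Proposition~3.2]{OPT} (equivalently, Kleppner's lemmas), which says that two $\TT$-valued $2$-cocycles on an abelian group are cohomologous exactly when their antisymmetrized bicharacters coincide, so the coboundary is never exhibited. You instead construct it explicitly: you identify the difference $\rho=\kappa^\sigma-\kappa$ as the symmetric $\ZZ_2$-form $\sum_{\{a,b\}\ \mathrm{inverted}}(m_a n_b+m_b n_a)$ supported on the unordered pairs inverted by the permutation (your reindexing, the symmetry of the inversion condition, and the cancellation of diagonal terms are all correct), and then verify the identity $(-1)^{m_a n_b+m_b n_a}=\delta^1\bigl((-1)^{m_a m_b}\bigr)(m,n)$ by the mod-$2$ expansion, so that $b(m)=(-1)^{\sum_{\{a,b\}\ \mathrm{inverted}}m_a m_b}$ does the job. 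What your approach buys is that it is elementary and self-contained (no appeal to the structure theory of multipliers on abelian groups) and it produces a concrete coboundary taking values in $\{\pm 1\}\subseteq\TT$, which is slightly sharper than the statement requires; your parenthetical remark about a hypothetical diagonal term $(-1)^{m_a n_a}$ needing a genuinely $\TT$-valued $b$ such as $b(m)=i^{m_a}$ is also accurate. What the paper's approach buys is brevity and generality: the bicharacter criterion settles the question for arbitrary cocycles on abelian groups in one stroke, with no combinatorial bookkeeping about inversions of $\sigma$.
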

\begin{proof}
Let $\chi_{c_{\kappa^\sigma}} : A \times A \to \{1,-1\} \subseteq \TT$ be the bicharacter
of $A$ defined by
\[
\chi_{c_{\kappa^\sigma}} (m,n) = c_{\kappa^\sigma}(m,n) c_{\kappa^\sigma}(n,m)^{-1} =  c_{\kappa^\sigma}(m,n) c_{\kappa^\sigma}(n,m)
\]
for all $m,n \in A$. For generators $e_i , e_j \in A$ we have
\[
\chi_{c_{\kappa^\sigma}} ( e_i , e_j ) = \begin{cases} 1 & \text{ if } i = j \\
-1 & \text{ otherwise.}
\end{cases}
\]
So $\chi_{c_{\kappa^\sigma}}$ does not depend on $\sigma$, and in particular
$\chi_{c_{\kappa^\sigma}} = \chi_{c_{\kappa^{\id}}} = \chi_{c_\kappa}$. Thus
\cite[Proposition~3.2]{OPT} implies that $c_\kappa$ and $c_{\kappa^\sigma}$ are
cohomologous (see also \cite[Lemmata 7.1 and 7.2]{Kleppner}). The final statement follows
by the definition of a coboundary (see \cite[Definition~3.2]{KPS4}).
\end{proof}

For $P := \NN^k \times \ZZ_2^l$, there is a natural surjection $\rho : P \to \ZZ_2^{k+l}$
given by taking the residue of each coordinate modulo $2$.

\begin{prp} \label{prp:clambdadef}
Let $\Lambda$ be a $P$-graph where  $P = \NN^k \times \ZZ_2^l$. With $\kappa$ defined as
in~\eqref{eq:cdef}, the formula $c_\Lambda(\lambda,\mu) = c_\kappa\big(\rho(d(\lambda))),
\rho(d(\mu))\big)$ defines a $2$-cocycle  $c_\Lambda \in Z^2 (\Lambda, \TT)$ with values
in $\{\pm 1\}$.
\end{prp}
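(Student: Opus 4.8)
The plan is to realise $c_\Lambda$ as the pullback of the group $2$-cocycle $c_\kappa$ along the monoid homomorphism $\rho \circ d$, and then to read off both the normalised cocycle identity and the $\{\pm 1\}$-valued condition directly from the corresponding properties of $c_\kappa$ recorded in Example~\ref{eg:kappa twists}.

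First I would observe that $\rho : P \to \ZZ_2^{k+l}$ is a homomorphism of monoids: on the $\ZZ_2^l$ coordinates it is the identity, and on each $\NN$ coordinate reduction modulo $2$ is a monoid homomorphism $\NN \to \ZZ_2$. Since $d : \Lambda \to P$ is a functor, we have $d(\lambda\mu) = d(\lambda) + d(\mu)$ whenever $(\lambda,\mu) \in \Lambda^{*2}$, so the composite $\bar d := \rho \circ d : \Lambda \to A := \ZZ_2^{k+l}$ satisfies $\bar d(\lambda\mu) = \bar d(\lambda) + \bar d(\mu)$ for all composable pairs, and $\bar d(v) = 0$ for every $v \in \Lambda^0$ (since identity morphisms have degree $0$). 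With this notation $c_\Lambda(\lambda,\mu) = c_\kappa(\bar d(\lambda), \bar d(\mu))$, and the $\{\pm 1\}$-valued assertion is immediate because by definition $c_\kappa(m,n) = (-1)^{\kappa(m,n)} \in \{\pm 1\}$.

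Next I would verify the cocycle identity. Writing $a = \bar d(\lambda)$, $b = \bar d(\mu)$ and $e = \bar d(\nu)$ for a composable triple $(\lambda,\mu,\nu) \in \Lambda^{*3}$, the homomorphism property gives $\bar d(\lambda\mu) = a+b$ and $\bar d(\mu\nu) = b + e$, so the four terms appearing in the categorical cocycle condition are $c_\kappa(a,b)$, $c_\kappa(a+b, e)$, $c_\kappa(b,e)$ and $c_\kappa(a, b+e)$. The required identity $c_\Lambda(\lambda,\mu)\, c_\Lambda(\lambda\mu,\nu) = c_\Lambda(\mu,\nu)\, c_\Lambda(\lambda,\mu\nu)$ thus reduces to $c_\kappa(a,b)\,c_\kappa(a+b,e) = c_\kappa(b,e)\,c_\kappa(a, b+e)$, which is precisely the group $2$-cocycle identity for $c_\kappa$; this holds because $c_\kappa \in Z^2(A,\TT)$, as established in Example~\ref{eg:kappa twists}. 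For the normalisation conditions I would use $\bar d(v) = 0$ together with biadditivity of $\kappa$, which gives $\kappa(0,n) = 0 = \kappa(m,0)$ and hence $c_\kappa(0,n) = 1 = c_\kappa(m,0)$; therefore $c_\Lambda(r(\lambda), \lambda) = c_\kappa(0, \bar d(\lambda)) = 1$ and $c_\Lambda(\lambda, s(\lambda)) = c_\kappa(\bar d(\lambda), 0) = 1$.

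I expect no genuine obstacle here: the content is entirely a pullback computation, and the only points requiring care are bookkeeping matters, namely confirming that $\rho$ is a homomorphism of \emph{monoids} (not merely of the ambient groups $G_P$) so that $\bar d$ respects composition, and matching the orientation of the categorical cocycle identity in the statement with the standard group-cohomological one. Once these are checked, the result follows immediately from the fact that $c_\kappa$ is a normalised, $\{\pm 1\}$-valued group $2$-cocycle.
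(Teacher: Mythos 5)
Your proposal is correct and follows essentially the same route as the paper: both arguments pull $c_\kappa$ back along $\rho \circ d$, get the $\{\pm 1\}$ values and normalisation for free from $\kappa(0,\cdot) = \kappa(\cdot,0) = 0$, and then check the categorical cocycle identity using the functoriality of $d$. The only immaterial difference is that the paper verifies the identity via bimultiplicativity of the pulled-back bicharacter, whereas you reduce it to the group $2$-cocycle identity for $c_\kappa$; both rest on the same biadditivity of $\kappa$ established in Example~\ref{eg:kappa twists}.
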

\begin{proof}
Since $\kappa$ is biadditive on $\ZZ_2^{k+l} \times \ZZ_2^{k+l}$ and since $\rho$ is a
homomorphism, the map $(m,n) \mapsto (-1)^{\kappa(\rho(m), \rho(n))}$ is a bicharacter of
$\NN^k \times \ZZ_2^l$. It follows immediately from the definition of $\kappa$ that
$c_\Lambda(r(\lambda), \lambda) = 1 = c_\Lambda(\lambda, s(\lambda))$. Since $d$ is a
functor, for a composable triple $\lambda,\mu,\nu$,
\[
c_\Lambda(\lambda\mu,\nu) = c_\Lambda(\lambda,\nu) c_\Lambda(\mu,\nu)\qquad\text{ and }\qquad
    c_\Lambda(\lambda,\mu\nu) = c_\Lambda(\lambda,\mu)c_\Lambda(\lambda,\nu).
\]
Hence
\[
c_\Lambda(\lambda,\mu)c_\Lambda(\lambda\mu, \nu)
    = c_\Lambda(\lambda,\mu) c_\Lambda(\lambda,\nu) c_\Lambda(\mu,\nu)
    = c_\Lambda(\lambda,\mu\nu)c_\Lambda(\mu,\nu).\qedhere
\]
\end{proof}

\begin{dfn}
Let $\Lambda$ be a row-finite $P$-graph with no sources, and $c \in Z^2(\Lambda, \TT)$. A
Cuntz--Krieger $(\Lambda,c)$-family in a $C^*$-algebra $B$ is a function $t : \lambda
\mapsto t_\lambda$ from $\Lambda$ to $B$ such that
\begin{itemize}
\item[(CK1)] $\{t_v : v \in \Lambda^0\}$ is a collection of mutually orthogonal
    projections;
\item[(CK2)] $t_\mu t_\nu = c(\mu,\nu)t_{\mu\nu}$ whenever $s(\mu) = r(\nu)$;
\item[(CK3)] $t^*_\lambda t_\lambda = t_{s(\lambda)}$ for all $\lambda \in \Lambda$;
    and
\item[(CK4)] $t_v = \sum_{\lambda \in v\Lambda^p} t_\lambda t^*_\lambda$ for all $v
    \in \Lambda^0$ and $p \in P$.
\end{itemize}
\end{dfn}

The following lemma is more or less standard.

\begin{lem}
Let $\Lambda$ be a row-finite $P$-graph with no sources, and take $c \in Z^2(\Lambda,
\TT)$. There exists a universal $C^*$-algebra $C^*(\Lambda, c)$ generated by a
Cuntz--Krieger $(\Lambda, c)$-family $\{s_\lambda : \lambda \in \Lambda\}$.
\end{lem}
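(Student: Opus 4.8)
The plan is to construct $C^*(\Lambda,c)$ as the completion of a universal $*$-algebra in a universal $C^*$-norm, which splits the problem into two parts: showing that such a norm is finite (so that the completion exists), and showing that it does not annihilate the generators (so that the resulting algebra is the expected one and, in particular, nonzero).

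For the first part, I would observe that the relations force every generator to have norm at most $1$. Indeed, in any Cuntz--Krieger $(\Lambda,c)$-family, (CK3) gives $t_\lambda^* t_\lambda = t_{s(\lambda)}$, and by (CK1) this is a projection, so $\|t_\lambda\|^2 = \|t_\lambda^* t_\lambda\| = \|t_{s(\lambda)}\| \le 1$. Hence, writing $\mathscr{F}$ for the free $*$-algebra on symbols $\{s_\lambda : \lambda \in \Lambda\}$, the assignment $\|a\|_u := \sup\{\|\pi(a)\| : \pi \text{ comes from a Cuntz--Krieger } (\Lambda,c)\text{-family}\}$ is finite on each $a \in \mathscr{F}$, being dominated by the evident polynomial in the (uniformly bounded) norms of the generators occurring in $a$. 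The standard machinery for universal $C^*$-algebras defined by generators and relations (following the $k$-graph case in \cite{KPS3,KPS4}) then yields $C^*(\Lambda,c)$ as the Hausdorff completion of $(\mathscr{F},\|\cdot\|_u)$, equipped with the required universal property.

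The substantive part is to exhibit a single Cuntz--Krieger $(\Lambda,c)$-family whose vertex projections are nonzero. For this I would adapt the infinite-path representation used for twisted $k$-graph algebras. Let $\Lambda^\infty$ be the collection of degree-preserving functors $x : \Omega_P \to \Lambda$, with $\Omega_P$ as in Examples~\ref{ex:pex}(i), form $\ell^2(\Lambda^\infty)$ with orthonormal basis $\{\delta_x\}$, and define each $S_\lambda$ as a cocycle-twisted shift: $S_\lambda\delta_x$ is a unimodular scalar multiple of $\delta_{\lambda x}$ when $s(\lambda) = r(x)$ and is $0$ otherwise. Choosing the twisting scalars by means of the cocycle identity for $c$ arranges that $S_\mu S_\nu = c(\mu,\nu)S_{\mu\nu}$, so that (CK2) holds, while (CK1) and (CK3) are immediate. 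For (CK4) I would invoke the factorisation property: for each $v \in \Lambda^0$ and $p \in P$, row-finiteness and the absence of sources ensure that every $x \in v\Lambda^\infty$ factors uniquely as $x = \lambda x'$ with $\lambda \in v\Lambda^p$ and $x' \in s(\lambda)\Lambda^\infty$, whence the projections $S_\lambda S_\lambda^*$ are mutually orthogonal and sum to $S_v$. Since no sources guarantees $v\Lambda^\infty \ne \emptyset$ for every $v$, each $S_v$ is nonzero.

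I expect the main obstacle to lie not in these algebraic checks but in setting up $\Lambda^\infty$ correctly. Because the preorder $\le$ on $P = \NN^k \times F$ fails to be antisymmetric once $F$ is nontrivial, the $F$-coordinate contributes morphisms of trivial $\NN^k$-degree, so $\Omega_P$, and with it $\Lambda^\infty$ and the shift maps $x \mapsto \lambda x$, carry structure absent in the $k$-graph setting. I would therefore need to verify carefully that finite paths extend to infinite ones (using row-finiteness and no sources) so that $v\Lambda^\infty$ is nonempty, and that the shift and its local inverses are well defined and compatible with the factorisations used for (CK4). The accompanying cocycle bookkeeping needed to pin down the twisting scalars for (CK2) and to check their compatibility under these factorisations is the only other delicate point, and it proceeds exactly as in \cite{KPS3,KPS4}.
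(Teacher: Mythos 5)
Your construction of the universal algebra is exactly the paper's proof: the relations force each $t_\lambda$ to be a partial isometry (so $\|t_\lambda\|\le 1$), and the standard generators-and-relations machinery --- the paper simply cites \cite[Theorem~2.10]{Loring} --- then yields $C^*(\Lambda,c)$ with its universal property. For the lemma as stated, that first part is the \emph{entire} proof: universality does not require the generators to be nonzero (the zero family satisfies (CK1)--(CK4)), so your ``substantive part'' is not needed here.

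What your second and third paragraphs address is the content of the later Proposition~\ref{prp:grpd}, where the paper shows $s_v \neq 0$ --- and it does so not by a twisted-shift representation on $\ell^2(\Lambda^\infty)$ but by building a continuous groupoid $2$-cocycle $\varsigma_c$ on the path groupoid $\Gg_\Lambda$ and a homomorphism $C^*(\Lambda,c)\to C^*(\Gg_\Lambda,\varsigma_c)$ carrying $s_\lambda$ to the nonzero element $1_{Z(\lambda,s(\lambda))}$. Your sketch glosses over the one genuinely hard point of that step. For the degree-dependent cocycles $c_\Lambda$ the twisting scalars are easy to write down, but the lemma concerns arbitrary $c\in Z^2(\Lambda,\TT)$, and then there is no canonical choice of unimodular scalars $\omega(\lambda,x)$ with $\omega(\mu,\nu x)\,\omega(\nu,x)=c(\mu,\nu)\,\omega(\mu\nu,x)$; producing them is equivalent to the partition-and-groupoid-cocycle construction of \cite[\S6]{KPS4}. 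Moreover, as the paper stresses in the proof of Proposition~\ref{prp:grpd}, that construction does \emph{not} carry over verbatim to $P$-graphs: minimal common extensions are unavailable when $F$ is nontrivial, so the partition of $\Gg_\Lambda$ must be built by a different argument. Thus your closing claim that the cocycle bookkeeping ``proceeds exactly as in \cite{KPS3,KPS4}'' is precisely the step that requires, and in the paper receives, a new proof --- though, to repeat, none of this affects the lemma itself, which your first part establishes correctly.
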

\begin{proof}
This follows from a standard argument (see, for example \cite[Theorem~2.10]{Loring})
using that the relations force the $t_\lambda$ to be partial isometries.
\end{proof}

Let $\Lambda$  be a row-finite $P$-graph with no sources.  The path space
$\Lambda^\Omega$ is defined to be the collection of all morphisms $x: \Omega \to \Lambda$
where $\Omega =  \Omega_P$ is as in Examples \ref{ex:pex} above. The collection of  sets
$Z(\lambda) :=  \{x \in \Lambda^\Omega : \lambda = x(0, d(\lambda))\}$, indexed by
$\lambda \in \Lambda$, is a basis of compact open sets for a locally compact Hausdorff
topology on $\Lambda^\Omega$. For each $r \in P$ we define the shift map $\sigma^r :
\Lambda^\Omega \to \Lambda^\Omega$ by $(\sigma^r x)(p,q) := x(p+r, q+r)$. We will need to
use  the path groupoid $\Gg_\Lambda$ of  $\Lambda$ introduced by Carlsen et al.\  in
\cite[\S{2}]{CKSS}:
\[
\Gg_\Lambda := \{(x, p - q, y):  p,q \in P \text{ and }\sigma^p(x) = \sigma^q(y)\}.
\]
We identify the path space  $\Lambda^\Omega$ with the  unit space $\Gg_\Lambda^{(0)}$ via
the map $x \mapsto (x, 0, x)$. For $\mu,\nu \in \Lambda$ with $s(\mu) = s(\nu)$, let
\[
Z(\mu, \nu) := \{ (x, d(\mu) - d(\nu), y):  x \in Z(\mu), y \in Z(\nu), \sigma^{d(\mu)}(x) = \sigma^{d(\nu)}(y) \}.
\]
The topology on $\Gg_\Lambda$ is the one with basis $\Uu_\Lambda = \{Z(\mu,\nu) : \mu,\nu
\in \Lambda, s(\mu) = s(\nu)\}$. This is a locally compact Hausdorff topology on
$\Gg_\Lambda$ and $\Gg_\Lambda$ is \'etale in this topology because $s : Z(\mu,\nu) \to
Z(\mu)$ is a homeomorphism for each $\mu$. The elements of $\Uu_\Lambda$ are all compact
open sets. Given $\mu, \nu \in \Lambda$ with $s(\mu) = s(\nu)$ and $p \in P$ we have
$Z(\mu, \nu) = \bigsqcup_{\alpha \in s(\mu)\Lambda^p}Z(\mu\alpha, \nu\alpha)$.

\begin{prp}\label{prp:grpd}
Let $\Lambda$ be a row-finite $P$-graph with no sources and take $c \in Z^2(\Lambda,
\TT)$. There is a continuous normalised $\TT$-valued groupoid 2-cocycle $\varsigma_c$ on
$\Gg_\Lambda$ and a surjective homomorphism $\pi : C^*(\Lambda, c) \to C^*(\Gg_\Lambda,
\varsigma_c)$ such that $\pi(s_\lambda) = 1_{Z(\lambda, s(\lambda))} \not= 0$ for all
$\lambda \in \Lambda$.
\end{prp}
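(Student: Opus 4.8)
The plan is to build the groupoid cocycle $\varsigma_c$ out of the categorical cocycle $c$, verify that the indicator functions $1_{Z(\lambda,s(\lambda))}$ form a Cuntz--Krieger $(\Lambda,c)$-family in the twisted groupoid algebra $C^*(\Gg_\Lambda,\varsigma_c)$, and then invoke the universal property of $C^*(\Lambda,c)$. To construct $\varsigma_c$, I would use that every $g \in \Gg_\Lambda$ has the form $(x, d(\mu)-d(\nu), y)$ with $x \in Z(\mu)$, $y \in Z(\nu)$, $s(\mu)=s(\nu)$ and $\sigma^{d(\mu)}(x)=\sigma^{d(\nu)}(y)$ (take $\mu = x(0,d(\mu))$ and $\nu = y(0,d(\nu))$), and that any two such representations of $g$ are related by simultaneously extending $(\mu,\nu)$ to $(\mu\alpha,\nu\alpha)$ with $\alpha \in s(\mu)\Lambda^r$. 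Given a composable pair $(g_1,g_2)$, I would first use the factorisation property to pass to representatives whose middle portions align, writing $g_1 = (x, d(\lambda)-d(\mu), y)$ and $g_2 = (y, d(\mu)-d(\nu), z)$ with a common $\mu$, and then define $\varsigma_c(g_1,g_2)$ as the appropriate product of values of $c$ obtained by recombining $\lambda,\mu,\nu$ via factorisation. This mirrors the passage from categorical to groupoid cocycles for $k$-graphs in \cite{KPS3, KPS4}, and the definition is arranged precisely so that the convolution identity $1_{Z(\mu,s(\mu))} * 1_{Z(\nu,s(\nu))} = c(\mu,\nu)\,1_{Z(\mu\nu,s(\nu))}$ holds whenever $s(\mu)=r(\nu)$.

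\textbf{The main obstacle} is to check that this assignment is independent of the chosen representatives and satisfies the groupoid $2$-cocycle identity. Both reduce to the categorical $2$-cocycle identity for $c$ together with the factorisation property. Independence follows by verifying that replacing $(\mu,\nu)$ with $(\mu\alpha,\nu\alpha)$ leaves the value unchanged, using the normalisation $c(r(\alpha),\alpha)=c(\alpha,s(\alpha))=1$ and the cocycle relation; and the cocycle identity for $\varsigma_c$ on composable triples translates, after aligning representatives, into the identity $c(\lambda,\mu)c(\lambda\mu,\nu)=c(\mu,\nu)c(\lambda,\mu\nu)$. Continuity is then automatic: because $\varsigma_c$ is defined through finite-path data it is constant on each basic compact open set of composable pairs drawn from $\Uu_\Lambda$, hence locally constant, hence continuous; and normalisation of $\varsigma_c$ on units follows from that of $c$. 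I would note that, although $\le$ need not be antisymmetric on $P$ when $F$ is nontrivial, this affects only the preorder and not the groupoid $\Gg_\Lambda$ of \cite{CKSS} or the cocycle machinery, so the $k$-graph arguments carry over.

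With $\varsigma_c$ in hand, I would show that $t_\lambda := 1_{Z(\lambda,s(\lambda))}$ satisfy (CK1)--(CK4). Each $Z(\lambda,s(\lambda))$ is a compact open bisection, so $t_\lambda \in C_c(\Gg_\Lambda) \subseteq C^*(\Gg_\Lambda,\varsigma_c)$. Relations (CK1) and (CK3) come from computing products and adjoints of indicators of bisections, using $Z(v,v)=Z(v)$ and $Z(\lambda,s(\lambda))^{-1}=Z(s(\lambda),\lambda)$; relation (CK2) is exactly the convolution identity recorded above that motivated the definition of $\varsigma_c$; and (CK4) follows from the disjoint decomposition $Z(v)=\bigsqcup_{\lambda \in v\Lambda^p}Z(\lambda)$ coming from row-finiteness, which yields $t_v = \sum_{\lambda \in v\Lambda^p} t_\lambda t_\lambda^*$. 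The universal property of $C^*(\Lambda,c)$ then produces a homomorphism $\pi$ with $\pi(s_\lambda)=t_\lambda=1_{Z(\lambda,s(\lambda))}$.

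Finally, surjectivity follows because the indicators $\{1_{Z(\mu,\nu)} : s(\mu)=s(\nu)\}$ of the basis $\Uu_\Lambda$ densely span $C^*(\Gg_\Lambda,\varsigma_c)$, and each lies in the image of $\pi$: computing the convolution $\pi(s_\mu)\pi(s_\nu)^* = 1_{Z(\mu,s(\mu))} * 1_{Z(s(\nu),\nu)}$ gives a unit-modulus scalar multiple of $1_{Z(\mu,\nu)}$, so $1_{Z(\mu,\nu)}$ is recovered up to the cocycle factor. Nonvanishing of $\pi(s_\lambda)$ is immediate: since $\Lambda$ has no sources the cylinder $Z(\lambda)$ is nonempty, so $Z(\lambda,s(\lambda))$ is a nonempty compact open set and its indicator is a nonzero element of $C^*(\Gg_\Lambda,\varsigma_c)$.
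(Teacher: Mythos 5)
There is a genuine gap, and it sits exactly at the crux of the proposition: the construction of $\varsigma_c$. You propose to define $\varsigma_c(g_1,g_2)$ on aligned representatives $g_1 \in Z(\lambda,\mu)$, $g_2 \in Z(\mu,\nu)$ as ``the appropriate product of values of $c$ obtained by recombining $\lambda,\mu,\nu$ via factorisation,'' with well-definedness reducing to the categorical cocycle identity. But you never write such a formula down, and no canonical one exists. An aligned triple consists of three paths $\lambda,\mu,\nu$ with a \emph{common source}, so in general (e.g.\ when $\lambda$ does not factor through $\mu$, or when $g_1$ is of inverse type, lying in some $Z(v,\mu)$) there is nothing to ``recombine.'' Moreover the values are genuinely not determined by $c$ alone: the Cuntz--Krieger relation forces $\varsigma_c(g_1,g_2)=c(\mu,\nu)$ only on pairs admitting nested representatives $(\mu\nu\beta,\nu\beta,\beta)$, and a groupoid cocycle extending these ``positive-cone'' values is never unique -- two extensions differ by a coboundary that is trivial on the positive part (already visible for $\Lambda = T_2$, where $\Gg_\Lambda \cong \ZZ^2$ and one must extend a cocycle from $\NN^2\times\NN^2$ to $\ZZ^2$). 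So any construction must make a global choice, and your sketch contains no mechanism for making one consistently. This is also not how \cite{KPS4} proceeds: there the cocycle $\sigma_c$ is defined \emph{relative to a partition} $\Pp$ of $\Gg_\Lambda$ by sets from $\Uu_\Lambda$ containing every $Z(\lambda,s(\lambda))$ (their Lemma~6.3); the partition supplies canonical representatives, and that is what makes the formula well defined.

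Your final remark -- that the failure of antisymmetry of $\le$ ``affects only the preorder \dots so the $k$-graph arguments carry over'' -- is precisely backwards, and is why the paper had to write a new proof at all. The partition in \cite[Lemma~6.6]{KPS4} is built from minimal common extensions, which exist for $k$-graphs because $\NN^k$ is a lattice; for $P = \NN^k\times F$ with $F$ nontrivial, MCEs do not make sense, and constructing the partition without them is the one piece of new mathematics in the paper's proof. The paper does this by showing that $M:=\bigsqcup_{p\in P} M_p$, where $M_p = \bigsqcup_{d(\lambda)=p}Z(\lambda,s(\lambda))$, is clopen, and then proving a claim that intersections and set differences of basic sets $Z(\kappa,\lambda)$, $Z(\mu,\nu)$ decompose as disjoint unions of basic sets, so that the complement of $M$ can also be partitioned by elements of $\Uu_\Lambda$; it then invokes \cite[Lemma~6.3]{KPS4} to get $\varsigma_c$ from this partition. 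The remainder of your argument -- that $\lambda \mapsto 1_{Z(\lambda,s(\lambda))}$ is a Cuntz--Krieger $(\Lambda,c)$-family, the appeal to the universal property, surjectivity via $\pi(s_\mu s_\nu^*)$ spanning a dense subspace, and nonvanishing from $Z(\lambda)\neq\emptyset$ -- matches the paper and is fine, but it all rests on a cocycle you have not actually constructed.
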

\begin{proof}
We follow the argument of \cite[\S{6}]{KPS4}, which proves the analogous result in the
case that $\Lambda$ is a $k$-graph (see \cite[Theorem 6.7]{KPS4}). The only additional
difficulty in our current setting is that the notion of minimal common extension for a
pair of elements in $\Lambda$ does not in general make sense in a $P$-graph. So we must
check that we can still construct a partition of $\Gg_\Lambda$ as in
\cite[Lemma~6.6]{KPS4} without using minimal common extensions.

We must show that there is a partition $\Qq$ of $\Gg_\Lambda$ consisting of elements of
$\Uu_\Lambda$ such that $Z(\lambda, s(\lambda)) \in \Qq$ for all $\lambda \in \Lambda$.
We first observe that for each $p \in P$ the set $M_p := \{ (x, p, \sigma^p(x)) : x \in
\Lambda^{\Omega}\}$ is a clopen set and that $M_p = \bigsqcup_{d(\lambda ) = p}
Z(\lambda, s(\lambda))$. Moreover, the $M_p$ are pairwise disjoint, $M :=\bigsqcup_{p \in
P} M_p$ is also open and each $Z(\mu, \nu)$ is either contained in some $M_p$ or disjoint
from $M$. This implies that $M$ is also closed, so it is a clopen set. It remains to show
that the complement of $M$ can be expressed as a disjoint union  of elements of
$\Uu_\Lambda$. Since $\Uu_\Lambda$ is a countable basis, we can write the complement of
$M$ as a countable union of  sets in $\Uu_\Lambda$.

\emph{Claim:} Let $Z(\kappa, \lambda)$, $Z(\mu, \nu) \in \Uu_\Lambda$.
Then both $Z(\kappa, \lambda)\cap Z(\mu, \nu)$ and $Z(\kappa, \lambda)\setminus Z(\mu, \nu)$
can be expressed as a disjoint union of elements of $\Uu_\Lambda$.\\
Note that $Z(\kappa, \lambda)$ and $Z(\mu, \nu)$ are disjoint unless $d(\kappa) - d(\lambda) = d(\mu) - d(\nu)$;
so we may assume that $d(\kappa) - d(\lambda) = d(\mu) - d(\nu)$.
Then there exist $p, q \in P$ such that $d(\kappa) + p = d(\mu)  + q$ and $d(\lambda) + p = d(\nu)  + q$.
Let
\[
A:= \{ \alpha \in s(\lambda)\Lambda^p : Z(\kappa\alpha, \lambda\alpha)= Z(\mu\beta, \nu\beta)
 \text{ for some }\beta \in s(\nu)\Lambda^q \}
 \]
 and let $B := s(\lambda)\Lambda^p \setminus A$; then we have
\[
Z(\kappa, \lambda)\cap Z(\mu, \nu) = \bigsqcup_{\alpha \in A} Z(\kappa\alpha, \lambda\alpha) \quad\text{and}\quad
Z(\kappa, \lambda)\setminus Z(\mu, \nu) =  \bigsqcup_{\alpha \in B} Z(\kappa\alpha, \lambda\alpha).
\]
This proves the claim.

Since  intersections and set differences of elements of $\Uu_\Lambda$ can be expressed as
disjoint unions of such sets, any countable union of elements of $\Uu_\Lambda$ can also
be expressed as a disjoint union of such sets. This gives us the desired partition $\Qq$.

The groupoid 2-cocycle $\varsigma_c$ is constructed as in  \cite[Lemma 6.3]{KPS4} (there
the cocycle is denoted $\sigma_c$ and the partition is denoted $\Pp$). By construction of
$\varsigma_c$ the map $\lambda \mapsto 1_{Z(\lambda, s(\lambda))}$ constitutes a
Cuntz--Krieger $(\Lambda, c)$-family. Hence, there is a homomorphism  $\pi : C^*(\Lambda,
c) \to  C^*(\Gg_\Lambda, \varsigma_c)$ such that $\pi(s_\lambda) = 1_{Z(\lambda,
s(\lambda))}$. Moreover, for every $Z(\mu, \nu) \in \Uu_\Lambda$, we have
\[
1_{Z(\mu, \nu)} = 1_{Z(\mu, s(\mu))}1_{Z(\nu, s(\nu))}^* = \pi(s_{\mu}s_{\nu}^*);
\]
and since the span of elements of the form $1_{Z(\mu, \nu)}$ is dense, $\pi$ is
surjective.
\end{proof}

If $c \in Z^2(\Lambda, \TT)$ is the trivial cocycle, then our definition of the twisted
$C^*$-algebra $C^*(\Lambda, c)$ reduces to the definition of the $C^*$-algebra
$C^*(\Lambda)$ (see \cite[Definition 2.4]{CKSS}). If $P = \NN^k$ and $c$ is an arbitrary
cocycle, then our definition of $C^*(\Lambda, c)$ agrees with the existing definition of
the twisted $k$-graph algebra (see \cite[Definition 5.2]{KPS4}).

\begin{examples} \label{ex:pgraphalgebras}
\begin{enumerate}[(i)]
\item Let $\Omega_P$ be the $P$-graph of Examples~\ref{ex:pex}(i). Then
    $C^*(\Omega_P) \cong \Kk(\ell^2(P))$.
\item Let $P$ be the $P$-graph from Examples~\ref{ex:pex}(ii). Then $C^*(P)$ is
    isomorphic to the group $C^*$-algebra $C^* (G_P)$ of the Grothendieck group of
    $P$.
\end{enumerate}
\end{examples}
Let $(\Lambda,d)$ be a $P$-graph and $c \in Z^2 ( \Lambda , \TT )$. Following
\cite[Lemma~2.5]{CKSS} we describe the gauge action of $\widehat{G_P}$ on $C^* ( \Lambda
, c)$. For $\chi \in \widehat{G_P}$ and $\lambda \in \Lambda$ set
\begin{equation} \label{eq:gauge}
\gamma^\Lambda_\chi ( s_\lambda ) = \chi ( d(\lambda) ) s_\lambda .
\end{equation}

A standard argument (see \cite[Lemma~2.5]{CKSS}) shows that the above formula defines a
strongly continuous action of $\widehat{G_P}$ on $C^*(\Lambda, c)$.

\begin{prp}[Gauge invariant uniqueness theorem] \label{prp:giut}
Let $\Lambda$ be a row-finite $P$-graph with no sources, and fix $c \in Z^2
(\Lambda,\TT)$. Let $t : \Lambda \to B$ be a Cuntz--Krieger $(\Lambda,c)$-family in a
$C^*$-algebra $B$. Suppose that there is a strongly continuous action $\beta$ of
$\widehat{G_P}$ on $B$ satisfying $\beta_\chi (t_\lambda) = \chi(d(\lambda)) t_\lambda$
for all $\lambda \in \Lambda$ and $\chi \in \widehat{G_P}$. Then the induced homomorphism
$\pi_t : C^*(\Lambda,c) \to B$ is injective if and only if $t_v \not= 0$ for all $v \in
\Lambda^0$.
\end{prp}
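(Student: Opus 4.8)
The ``only if'' direction I would dispatch immediately: Proposition~\ref{prp:grpd} supplies a homomorphism $\pi : C^*(\Lambda, c) \to C^*(\Gg_\Lambda, \varsigma_c)$ with $\pi(s_v) = 1_{Z(v,v)} \neq 0$, so each $s_v$ is nonzero in $C^*(\Lambda, c)$; hence if $\pi_t$ is injective then $t_v = \pi_t(s_v) \neq 0$. The substance is the ``if'' direction, which I would prove by the standard conditional-expectation argument, reducing injectivity of $\pi_t$ to injectivity on the fixed-point algebra of the gauge action.

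Since $G_P = \ZZ^k \times F$ is discrete and countable, its dual $\widehat{G_P}$ is a compact abelian group, so averaging against normalised Haar measure produces a conditional expectation $\Phi : C^*(\Lambda, c) \to \Cc$ onto the fixed-point algebra $\Cc := C^*(\Lambda,c)^{\gamma^\Lambda}$; on the dense span $\clsp\{s_\mu s_\nu^*\}$ it is given by $\Phi(s_\mu s_\nu^*) = s_\mu s_\nu^*$ if $d(\mu) = d(\nu)$ and $\Phi(s_\mu s_\nu^*) = 0$ otherwise, using $\int_{\widehat{G_P}} \chi(g)\,d\chi = 0$ for $g \neq 0$. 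This $\Phi$ is faithful: if $\Phi(a^*a) = 0$ then $\int_{\widehat{G_P}} \gamma^\Lambda_\chi(a^*a)\,d\chi = 0$, and positivity and continuity of the integrand force $a^*a = 0$. The hypothesis on $\beta$ gives $\beta_\chi \circ \pi_t = \pi_t \circ \gamma^\Lambda_\chi$ on generators, hence everywhere, so averaging yields $\Psi \circ \pi_t = \pi_t \circ \Phi$, where $\Psi$ is the analogous expectation induced by $\beta$ on $B$. Thus if $\pi_t(a) = 0$ then $\pi_t(\Phi(a^*a)) = \Psi(\pi_t(a^*a)) = 0$, and faithfulness of $\Phi$ reduces the problem to showing that $\pi_t$ is injective on $\Cc$.

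To analyse $\Cc$, I would note that $\Cc = \clsp\{s_\mu s_\nu^* : d(\mu) = d(\nu)\}$ and set $\Cc_p := \clsp\{s_\mu s_\nu^* : d(\mu) = d(\nu) = p\}$. The preorder $\le$ on $P$ is directed, since $p, q \le p + q$, and for $p \le q$ with $q = p + u$ relations (CK2) and (CK4) give $s_\mu s_\nu^* = \sum_{\alpha \in s(\mu)\Lambda^u} c(\mu,\alpha)\overline{c(\nu,\alpha)}\, s_{\mu\alpha} s_{\nu\alpha}^*$, so $\Cc_p \subseteq \Cc_q$ and $\Cc = \overline{\bigcup_{p \in P} \Cc_p}$ is a direct limit with injective connecting maps; it therefore suffices to prove $\pi_t$ injective on each $\Cc_p$. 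The relations (CK1)--(CK4) yield $s_\mu s_\nu^* = 0$ unless $s(\mu) = s(\nu)$, and for paths of common degree $p$ the matrix-unit identities $s_\mu^* s_\nu = \delta_{\mu,\nu} s_{s(\mu)}$ and $(s_\mu s_\nu^*)(s_{\mu'}s_{\nu'}^*) = \delta_{\nu,\mu'} s_\mu s_{\nu'}^*$ (the twisting scalars cancelling on the diagonal), whence $\Cc_p \cong \bigoplus_{w \in \Lambda^0} \Kk(\ell^2(\Lambda^p w))$. On the summand indexed by $w$, the map $\pi_t$ sends $s_\mu s_\nu^*$ to $t_\mu t_\nu^*$; since $t_\mu^* t_\mu = t_w$ by (CK3), we get $t_\mu t_\mu^* \neq 0$ whenever $t_w \neq 0$, so $\pi_t$ is nonzero, hence (as $\Kk(\ell^2(\Lambda^p w))$ is simple) injective, on that summand. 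Injectivity on each summand gives injectivity on $\Cc_p$, and therefore on $\Cc$, completing the argument.

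The bulk of the remaining work, and the only place where the step from $k$-graphs to $P$-graphs needs genuine care, is the verification of the core structure: that $\le$ is directed, that the maps $\Cc_p \hookrightarrow \Cc_q$ are the expected inclusions, and that the diagonal matrix-unit relations survive the twisting cocycle. These go through exactly as in the $k$-graph case (cf.\ \cite{CKSS, KPS4}); the one cosmetic difference is that $\Lambda^p w$ need not be finite, but since $\Kk$ of a separable Hilbert space is simple, the injectivity argument on each summand is unaffected. I expect the main subtlety to be the cocycle bookkeeping in the connecting maps and the matrix-unit computations, rather than any essential new obstacle.
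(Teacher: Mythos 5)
Your proof is correct and is essentially the same argument the paper relies on: the paper's proof simply cites \cite[Proposition~2.7]{CKSS} together with the observation (cf.\ \cite[Theorem~4.2]{KP5}) that the fixed-point algebra of the gauge action on $C^*(\Lambda,c)$ coincides with that of $C^*(\Lambda)$, precisely because the twisting scalars cancel in the core. Your conditional-expectation argument, the decomposition $\Cc = \overline{\bigcup_{p \in P} \Cc_p}$ with $\Cc_p \cong \bigoplus_{w \in \Lambda^0} \Kk(\ell^2(\Lambda^p w))$, and your remark that the cocycle values cancel in the matrix-unit relations are exactly the content of those citations, written out in full.
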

\begin{proof}
The result follows from the same argument as in \cite[Proposition~2.7]{CKSS} and the
observation that the fixed point algebra for the gauge action on $C^*(\Lambda, c)$ is
identical to the fixed point algebra for the gauge action on $C^*(\Lambda)$ (cf.
\cite[Theorem~4.2]{KP5}).
\end{proof}

\begin{cor}
With notation as in Proposition \ref{prp:grpd} the map  $\pi : C^*(\Lambda, c) \to
C^*(\Gg_\Lambda, \varsigma_c)$ is an isomorphism.
\end{cor}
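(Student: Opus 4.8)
The plan is to deduce injectivity of $\pi$ from the gauge-invariant uniqueness theorem (Proposition~\ref{prp:giut}), since surjectivity is already established in Proposition~\ref{prp:grpd}. Write $t_\lambda := \pi(s_\lambda) = 1_{Z(\lambda, s(\lambda))}$; by Proposition~\ref{prp:grpd} these elements form a Cuntz--Krieger $(\Lambda, c)$-family in $C^*(\Gg_\Lambda, \varsigma_c)$, and the homomorphism it induces is precisely $\pi$. So it suffices to produce a strongly continuous action $\beta$ of $\widehat{G_P}$ on $C^*(\Gg_\Lambda, \varsigma_c)$ satisfying $\beta_\chi(t_\lambda) = \chi(d(\lambda)) t_\lambda$, and to check that $t_v \neq 0$ for every $v \in \Lambda^0$; Proposition~\ref{prp:giut} then gives injectivity of $\pi_t = \pi$, and hence that $\pi$ is an isomorphism.

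To build $\beta$, I would use the canonical continuous groupoid homomorphism $c_0 : \Gg_\Lambda \to G_P$ given by $c_0(x, g, y) = g$. This is continuous because it is constant on each basic open set $Z(\mu, \nu)$, where it takes the value $d(\mu) - d(\nu)$. For $\chi \in \widehat{G_P}$, define $\beta_\chi$ on $C_c(\Gg_\Lambda, \varsigma_c)$ by $(\beta_\chi f)(\gamma) = \chi(c_0(\gamma)) f(\gamma)$. Since $c_0$ is a homomorphism and $\chi$ is a character, the scalar $\chi(c_0(\gamma))$ factors as $\chi(c_0(\alpha))\chi(c_0(\beta))$ whenever $\gamma = \alpha\beta$, so $\beta_\chi$ respects the $\varsigma_c$-twisted convolution product, leaving the twist $\varsigma_c$ itself unchanged; a similar computation using $c_0(\gamma^{-1}) = -c_0(\gamma)$ shows $\beta_\chi$ respects the involution. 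Thus each $\beta_\chi$ is a $*$-automorphism, and the usual argument on the dense subalgebra $C_c(\Gg_\Lambda, \varsigma_c)$ shows $\chi \mapsto \beta_\chi$ is strongly continuous (cf.\ the gauge action on $C^*(\Lambda,c)$ at~\eqref{eq:gauge}). Evaluating on a generator, $1_{Z(\lambda, s(\lambda))}$ is supported on morphisms of the form $(x, d(\lambda), y)$, so $\beta_\chi(t_\lambda) = \chi(d(\lambda)) t_\lambda$, as needed.

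Finally, $t_v = 1_{Z(v,v)}$ is the characteristic function of a nonempty compact open subset of the unit space $\Gg_\Lambda^{(0)}$ (nonempty because $\Lambda$ has no sources, so $Z(v) \neq \emptyset$), and is therefore nonzero; this was already recorded in Proposition~\ref{prp:grpd}. With both hypotheses of Proposition~\ref{prp:giut} verified, $\pi = \pi_t$ is injective, and combined with the surjectivity from Proposition~\ref{prp:grpd} we conclude that $\pi$ is an isomorphism.

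The step I expect to require the most care is the well-definedness and strong continuity of $\beta$ on the \emph{twisted} groupoid algebra: one must confirm that multiplication by $\chi \circ c_0$ genuinely descends to a $*$-automorphism of the completion $C^*(\Gg_\Lambda, \varsigma_c)$ despite the presence of the cocycle $\varsigma_c$. The point that makes this routine is that the character factor is compatible with the twist precisely because $c_0$ is a groupoid homomorphism into an abelian group, so the $\varsigma_c$-twisted convolution is preserved verbatim.
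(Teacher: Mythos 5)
Your proof is correct and follows essentially the same route as the paper: both construct the dual action on $C^*(\Gg_\Lambda, \varsigma_c)$ from the canonical $G_P$-valued groupoid cocycle $(x,g,y) \mapsto g$ (which the paper calls $\tilde d$), observe that $\pi$ intertwines it with the gauge action and that each $\pi(s_v) = 1_{Z(v,v)}$ is nonzero, and then invoke the gauge-invariant uniqueness theorem (Proposition~\ref{prp:giut}) together with the surjectivity from Proposition~\ref{prp:grpd}. The only difference is that you spell out the compatibility of the action with the twist $\varsigma_c$, which the paper delegates to the argument of \cite[Corollary~7.8]{KPS4}.
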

\begin{proof}
We argue as in \cite[Corollary 7.8]{KPS4}. The cocycle $\tilde{d} : \Gg_\Lambda \to G_P$
given by $\tilde{d}(x, m, y) = m$ induces an action $\beta$ of $\widehat{G_P}$ on
$C^*(\Gg_\Lambda, \varsigma_c)$ satisfying $\beta_\chi(f)(x,p-q,y) = \chi(p-q)
f(x,p-q,y)$ for $f \in C_c(\Gg_\Lambda)$. By construction, $\pi$ intertwines this action
with the gauge action. Proposition~\ref{prp:grpd} shows that each $\pi(s_v)$ is nonzero.
So the result follows from Proposition~\ref{prp:giut}.
\end{proof}

\begin{lem}(cf.\ \cite[Proposition 3.2]{FPS})\label{lem:FPS}
Let $F$ be a countable abelian group and let $\Lambda$ be a row-finite $k$-graph with no
sources. Suppose that $F$ acts on $\Lambda$ by automorphisms $g \mapsto \rho_g$. Let $P =
\NN^k \times F$. There is a unique $P$-graph $\Lambda \times_\rho F$ such that
\begin{enumerate}
\item As a set, $\Lambda \times_\rho F = \Lambda \times F$ with degree map
    $d(\lambda,g)=(d(\lambda),g)$;
\item
$r(\lambda, g) = (r(\lambda ), 0)$, $s(\lambda, g) = (s(\rho_{-g}(\lambda)), 0)$;
\item $(\mu, g)(\nu, h) = (\mu \rho_g(\nu), g+h)$ whenever $s(\mu) = r(\rho_g \nu)$.
\end{enumerate}
\end{lem}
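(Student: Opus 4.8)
The plan is to verify directly that the data in (1)--(3) satisfy the axioms of a $P$-graph, and then to observe that uniqueness is immediate because (1)--(3) already specify all of the structure. Throughout I will use that each $\rho_g$, being an automorphism of the $k$-graph $\Lambda$, is a degree-preserving functor, so $d(\rho_g(\lambda)) = d(\lambda)$, $r(\rho_g(\lambda)) = \rho_g(r(\lambda))$ and $s(\rho_g(\lambda)) = \rho_g(s(\lambda))$; and that $\rho$ is a homomorphism $F \to \Aut(\Lambda)$, so $\rho_0 = \id$ and $\rho_{g+h} = \rho_g\circ\rho_h$ (here $F$ is abelian, written additively).

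First I would confirm that the category-theoretic composability condition is consistent with (3). Two morphisms $(\mu,g)$ and $(\nu,h)$ are composable precisely when $s(\mu,g) = r(\nu,h)$; by (2) this reads $s(\rho_{-g}(\mu)) = r(\nu)$, and applying the functor $\rho_g$ and using $\rho_g\rho_{-g}=\id$ shows it is equivalent to the condition $s(\mu) = r(\rho_g\nu)$ appearing in (3). In particular $\mu\rho_g(\nu)$ is then a genuine composite in $\Lambda$, so the right-hand side of (3) lies in $\Lambda\times F$. I would then check that $r$ and $s$ behave correctly on composites: a short computation gives $r((\mu,g)(\nu,h)) = r(\mu,g)$, and expanding $\rho_{-(g+h)}(\mu\rho_g(\nu)) = \rho_{-g-h}(\mu)\rho_{-h}(\nu)$ gives $s((\mu,g)(\nu,h)) = s(\nu,h)$.

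Next I would verify the remaining category axioms and that $d$ is a functor. Associativity reduces to the identity $\rho_g(\nu\rho_h(\xi)) = \rho_g(\nu)\rho_{g+h}(\xi)$, which holds because $\rho_g$ is a functor and $\rho_g\rho_h = \rho_{g+h}$; both bracketings of $(\mu,g)(\nu,h)(\xi,f)$ then equal $(\mu\rho_g(\nu)\rho_{g+h}(\xi),\, g+h+f)$. The identity morphisms are the elements $(v,0)$ with $v\in\Lambda^0$: using $\rho_0=\id$, the relation $\rho_g(s(\rho_{-g}(\lambda))) = s(\lambda)$, and $r(\lambda)\lambda=\lambda=\lambda s(\lambda)$, one checks $(r(\lambda),0)(\lambda,g) = (\lambda,g) = (\lambda,g)(s(\rho_{-g}(\lambda)),0)$. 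That $d$ is a functor into $P=\NN^k\times F$ follows from (1): since $d(\rho_g(\nu))=d(\nu)$ we get $d((\mu,g)(\nu,h)) = (d(\mu)+d(\nu),\, g+h) = d(\mu,g)+d(\nu,h)$. Countability of $\Lambda\times F$ is clear.

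The one genuinely new point is the factorisation property, and this is where I would focus. Given $(\lambda,g)$ with $d(\lambda,g) = (d(\lambda),g) = (m,a)+(n,b)$ where $m,n\in\NN^k$ and $a,b\in F$, so $d(\lambda)=m+n$ and $g=a+b$, I would invoke the factorisation property of the $k$-graph $\Lambda$ to write $\lambda=\alpha\beta$ uniquely with $d(\alpha)=m$ and $d(\beta)=n$, and then set $(\mu,g_1) := (\alpha,a)$ and $(\nu,g_2) := (\rho_{-a}(\beta),b)$. Since $\rho_a(\rho_{-a}(\beta))=\beta$ and $s(\alpha)=r(\beta)$, equation (3) gives $(\alpha,a)(\rho_{-a}(\beta),b) = (\alpha\beta,g) = (\lambda,g)$, while $d(\rho_{-a}(\beta))=d(\beta)=n$ gives the correct degrees. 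For uniqueness, any decomposition $(\lambda,g)=(\mu,a)(\nu,b)$ with the prescribed degrees forces $\mu\rho_a(\nu)=\lambda$ with $d(\mu)=m$ and $d(\rho_a(\nu))=n$; uniqueness of factorisations in $\Lambda$ then forces $\mu=\alpha$ and $\rho_a(\nu)=\beta$, i.e. $\nu=\rho_{-a}(\beta)$. Finally, uniqueness of the $P$-graph $\Lambda\times_\rho F$ is immediate, since (1)--(3) pin down the morphism set, the degree map, the range and source maps, and the composition, and these determine the category completely. The main obstacle is simply the disciplined bookkeeping of the twisting automorphisms $\rho_g$ --- in particular, keeping the $\rho_{-g}$ in the source map of (2) consistent with the $\rho_g$ appearing in the composition formula (3).
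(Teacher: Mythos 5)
Your proof is correct and follows essentially the same route as the paper's: direct verification of the category axioms, with associativity reduced to the identity $\rho_g(\nu\rho_h(\xi)) = \rho_g(\nu)\rho_{g+h}(\xi)$, and the factorisation property established by factoring $\lambda = \alpha\beta$ in $\Lambda$ and twisting the second factor to $\rho_{-a}(\beta)$, exactly as in the paper (where it is written $\mu = \lambda(0,m)$, $\nu = \rho_{-h}(\lambda(m,m+n))$). You are in fact more thorough than the published proof, which checks only associativity and factorisation and leaves the composability, range/source, identity, and functoriality verifications implicit.
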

\begin{proof}
For associativity we compute
\begin{align*}
\big((\lambda,g)(\mu,h)\big)(\nu,k) &= (\lambda\rho_g(\mu), g+h)(\nu,k)
     = (\lambda\rho_g(\mu) \rho_{g+h}(\nu), g+h+k) \\
    &= (\lambda, g)(\mu\rho_h(\nu), h+k)
     = (\lambda,g)\big((\mu,h)(\nu,k)\big).
\end{align*}
For the factorisation property we suppose that $d(\lambda, g) = (m+n,h+k)$. Then $\mu =
\lambda(0,m)$ and $\nu = \rho_{-h}(\lambda(m,m+n))$ satisfy $(\lambda,g) = (\mu, h) (\nu,
k)$ with $d(\mu,h) = (m,h)$ and $d(\nu,k) = (n,k)$. To see that this factorisation is
unique, suppose that
\[
(\lambda, g) = (\mu, h')(\nu, k') = (\mu\rho_{h'}(\nu), h'+k')
\]
with $d(\mu,h') = (m,h)$ and $d(\nu, k') = (n,k)$. Then $h' = h$ and $k' = k$ by
definition of $d$. Furthermore, $\lambda = \mu\rho_{h}(\nu)$ where $d(\mu) = m$ and
$d(\rho_h(\nu)) = d(\nu) = n$. So the factorisation property in $\Lambda$ forces $\mu =
\lambda(0,m)$ and $\rho_h(\nu) = \lambda(m,m+n)$, forcing $\nu = \rho_{-h}(\lambda(m,
m+n))$.
\end{proof}

\begin{example}\label{eq:cp=omega}
Let $F$ be a countable abelian group. We can regard $F$ as a $0$-graph and then there is
an action $\tau$ of $F$ on this $0$-graph by translation. So Lemma~\ref{lem:FPS} yields
an $F$-graph $F \times_\tau F$. It is straightforward to check that $F \times_\tau F
\cong \Omega_F$ via the map $(g,h) \mapsto (g, g+h)$.
\end{example}

\begin{prp}\label{prp:action_stations}
Continue the notation of Lemma~\ref{lem:FPS}.  Then there is an action $\tilde{\rho} : F
\to \operatorname{Aut}(C^*(\Lambda))$ such that $\tilde{\rho}_g (s_\lambda) =
s_{\rho_g(\lambda)}$.
\end{prp}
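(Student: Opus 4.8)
The plan is to invoke the universal property of $C^*(\Lambda)$. Fix $g \in F$. Since $\rho_g$ is an automorphism of the $k$-graph $\Lambda$, it preserves the degree, range and source maps and composition, and it restricts to a bijection $v\Lambda^p \to \rho_g(v)\Lambda^p$ for each $v \in \Lambda^0$ and $p \in \NN^k$. Using these facts I would verify that the elements $t_\lambda := s_{\rho_g(\lambda)}$ form a Cuntz--Krieger $\Lambda$-family. Relation (CK1) holds because $\rho_g$ permutes $\Lambda^0$, so $\{s_{\rho_g(v)} : v \in \Lambda^0\}$ is a reindexing of the mutually orthogonal family $\{s_v\}$; relations (CK2) and (CK3) follow immediately from $\rho_g(\mu\nu) = \rho_g(\mu)\rho_g(\nu)$ and $s(\rho_g(\lambda)) = \rho_g(s(\lambda))$ (note that here the cocycle is trivial, so (CK2) reads $t_\mu t_\nu = t_{\mu\nu}$); and (CK4) follows by reindexing the Cuntz--Krieger sum along the bijection $v\Lambda^p \to \rho_g(v)\Lambda^p$.

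The universal property of $C^*(\Lambda)$ then yields a homomorphism $\tilde\rho_g : C^*(\Lambda) \to C^*(\Lambda)$ with $\tilde\rho_g(s_\lambda) = s_{\rho_g(\lambda)}$. To see that each $\tilde\rho_g$ is an automorphism and that $g \mapsto \tilde\rho_g$ is an action, I would check the relation $\tilde\rho_g \circ \tilde\rho_h = \tilde\rho_{g+h}$ on the generators: since $g \mapsto \rho_g$ is an action of $F$ (so $\rho_g \circ \rho_h = \rho_{g+h}$), we have $\tilde\rho_g(\tilde\rho_h(s_\lambda)) = s_{\rho_g(\rho_h(\lambda))} = s_{\rho_{g+h}(\lambda)} = \tilde\rho_{g+h}(s_\lambda)$, and $\tilde\rho_0 = \operatorname{id}$ since $\rho_0 = \operatorname{id}$. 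In particular $\tilde\rho_g \circ \tilde\rho_{-g} = \operatorname{id} = \tilde\rho_{-g} \circ \tilde\rho_g$, so each $\tilde\rho_g$ is invertible and hence an automorphism, and $g \mapsto \tilde\rho_g$ is a group homomorphism into $\operatorname{Aut}(C^*(\Lambda))$. Since $F$ is discrete, there is no continuity condition to verify.

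The argument is entirely routine, so I do not anticipate a genuine obstacle; the only point needing a little care is relation (CK4), where one must observe that $\rho_g$ carries $v\Lambda^p$ bijectively onto $\rho_g(v)\Lambda^p$ so that the defining Cuntz--Krieger sum is preserved under the relabelling. Everything else is a direct transcription of the $k$-graph axioms through the automorphism $\rho_g$, followed by a check of the group law on the generating set $\{s_\lambda : \lambda \in \Lambda\}$.
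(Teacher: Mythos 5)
Your proposal is correct and follows exactly the route the paper takes: the paper's proof is a one-line appeal to the universal property of $C^*(\Lambda)$ (citing \cite[Proposition~3.1]{FPS}), and your argument simply fills in the routine verification that $\lambda \mapsto s_{\rho_g(\lambda)}$ is a Cuntz--Krieger $\Lambda$-family together with the group law $\tilde\rho_g \circ \tilde\rho_h = \tilde\rho_{g+h}$ on generators.
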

\begin{proof}
This follows from the universal property of $C^*(\Lambda)$ (cf.
\cite[Proposition~3.1]{FPS}).
\end{proof}

\begin{thm}Continue the notation of Lemma~\ref{lem:FPS} and Proposition~\ref{prp:action_stations}.
Then
\begin{enumerate}
\item there is a unitary representation of $u: F \to \mathcal{U}
    \mathcal{M}\big(C^*(\Lambda \times_\rho F)\big)$ given by $u(g) = \sum_{v \in
    \Lambda^0} s_{(v,g)}$;
\item there is a homomorphism $\phi : C^*(\Lambda) \to C^*(\Lambda \times_\rho F)$
    given by $s_\lambda \mapsto s_{(\lambda,0)}$;
\item we have $u(g) \phi(a) u(g)^* = \tilde{\rho}_g(a)$ for all $a \in C^*(\Lambda)$
    and $g \in F$;
\item There is an isomorphism $\phi \times u : C^*(\Lambda) \rtimes_{\tilde{\rho}} F
    \to C^*(\Lambda \times_\rho F)$ such that $(\phi \times u) (s_\lambda, g) =
    s_{(\lambda,g)}$.
\end{enumerate}
\end{thm}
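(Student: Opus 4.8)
The plan is to verify the four assertions in turn, using throughout the explicit formulas for composition, range, source and degree in $\Lambda \times_\rho F$ from Lemma~\ref{lem:FPS}, the Cuntz--Krieger relations (CK1)--(CK4), and the relevant universal properties. I write $s_{(\lambda,g)}$ for the generators of $C^*(\Lambda\times_\rho F)$, and $i : C^*(\Lambda) \to \Mm(C^*(\Lambda)\rtimes_{\tilde\rho} F)$, $U : F \to \mathcal{U}\Mm(C^*(\Lambda)\rtimes_{\tilde\rho} F)$ for the canonical maps. For~(1), the key observation is that a path of degree $(0,g)$ is exactly one of the form $(v,g)$ with $v \in \Lambda^0$, and that $(v,0)(\Lambda\times_\rho F)^{(0,g)} = \{(v,g)\}$ is a singleton; hence (CK4) collapses to $s_{(v,g)}s_{(v,g)}^* = s_{(v,0)}$, while (CK3) gives $s_{(v,g)}^* s_{(v,g)} = s_{(\rho_{-g}(v),0)}$. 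Orthogonality of distinct vertex projections then yields $s_{(v,g)}s_{(w,g)}^* = \delta_{v,w}\, s_{(v,0)}$ and $s_{(v,g)}^*s_{(w,g)} = \delta_{v,w}\, s_{(\rho_{-g}(v),0)}$, and summing over $v$ (using that $\rho_{-g}$ permutes $\Lambda^0$ and that $\sum_v s_{(v,0)}$ is the strict unit) shows $u(g)^*u(g) = u(g)u(g)^* = 1$. The identity $u(g)u(h)=u(g+h)$ follows because $(v,g)(w,h)$ is composable only for $w=\rho_{-g}(v)$, in which case it equals $(v,g+h)$.

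For~(2) I would check directly that $\{s_{(\lambda,0)}\}$ is a Cuntz--Krieger $\Lambda$-family: composability of $(\mu,0),(\nu,0)$ reduces to $s(\mu)=r(\nu)$ with product $(\mu\nu,0)$, and the bijection $\lambda\mapsto(\lambda,0)$ of $v\Lambda^p$ onto $(v,0)(\Lambda\times_\rho F)^{(p,0)}$ turns (CK4) for $\Lambda$ into (CK4) for $\Lambda\times_\rho F$; the universal property of $C^*(\Lambda)$ then produces $\phi$, which is nondegenerate since $\sum_v \phi(s_v)=1$. For~(3) it suffices to verify $u(g)\phi(s_\lambda)u(g)^* = \phi(\tilde\rho_g(s_\lambda)) = s_{(\rho_g(\lambda),0)}$ on generators. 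Computing $u(g)s_{(\lambda,0)}$, only the term $v=\rho_g(r(\lambda))$ is composable, giving $s_{(\rho_g(\lambda),g)}$; factoring $(\rho_g(\lambda),g) = (\rho_g(\lambda),0)\,(\rho_g(s(\lambda)),g)$ via the factorisation property and applying the singleton-fibre identity $s_{(\rho_g(s(\lambda)),g)}s_{(w,g)}^* = \delta_{w,\rho_g(s(\lambda))}\,s_{(\rho_g(s(\lambda)),0)}$ collapses $s_{(\rho_g(\lambda),g)}u(g)^*$ to $s_{(\rho_g(\lambda),0)}$. As $\Ad u(g)\circ\phi$ and $\phi\circ\tilde\rho_g$ are both homomorphisms, agreement on generators suffices.

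For~(4), covariance from~(3) together with the universal property of the full crossed product yields a homomorphism $\phi\times u$ with $(\phi\times u)(i(a)U(g)) = \phi(a)u(g)$; a computation exactly like those above gives $(\phi\times u)(i(s_\lambda)U(g)) = s_{(\lambda,0)}u(g) = s_{(\lambda,g)}$, so $\phi\times u$ is surjective. To invert it I would run the universal property in the other direction: the family $t_{(\lambda,g)} := i(s_\lambda)U(g)$ is a Cuntz--Krieger $(\Lambda\times_\rho F)$-family in $C^*(\Lambda)\rtimes_{\tilde\rho}F$, where the crossed-product relation $U(g)i(a) = i(\tilde\rho_g(a))U(g)$ makes (CK2)--(CK4) fall out just as before, so there is $\psi : C^*(\Lambda\times_\rho F)\to C^*(\Lambda)\rtimes_{\tilde\rho}F$ with $\psi(s_{(\lambda,g)}) = i(s_\lambda)U(g)$. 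Comparing $\psi$ and $\phi\times u$ on the two generating sets shows they are mutually inverse. (Alternatively, injectivity of $\psi$ follows from the gauge-invariant uniqueness theorem, Proposition~\ref{prp:giut}, using the action of $\widehat{G_P}=\TT^k\times\widehat F$ built from the gauge action on $C^*(\Lambda)$ and the dual action of $\widehat F$, together with $t_{(v,0)} = i(s_v)\ne 0$.)

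The main obstacle I anticipate is bookkeeping rather than conceptual: the covariance identity in~(3) is the one place where one must simultaneously invoke the factorisation property in $\Lambda\times_\rho F$ and the singleton-fibre phenomenon for degree-$(0,g)$ paths, so the indices must be tracked carefully. A secondary point requiring care is the strict convergence of the multiplier sums defining $u(g)$ (and of $\sum_v s_{(v,0)}=1$) when $\Lambda^0$ is infinite.
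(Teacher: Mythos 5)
Your proposal is correct, and it matches the paper's approach: the paper's entire proof is the single line ``This follows from the proof of \cite[Theorem 3.4]{FPS} \textit{mutatis mutandis}'', and the FPS argument it invokes is exactly what you have reconstructed --- verify (1)--(3) by direct computation with the Cuntz--Krieger relations (using the singleton fibres $(v,0)(\Lambda\times_\rho F)^{(0,g)} = \{(v,g)\}$), obtain $\phi\times u$ from the universal property of the crossed product, and prove bijectivity by exhibiting the inverse Cuntz--Krieger family $i(s_\lambda)U(g)$ (or, equivalently, by the gauge-invariant uniqueness theorem, Proposition~\ref{prp:giut}). Your flagged caveat about strict convergence of $\sum_v s_{(v,g)}$ is the standard one and is handled the same way in that reference, so there is no gap.
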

\begin{proof}
This follows from the proof of \cite[Theorem 3.4]{FPS} \textit{mutatis mutandis}.
\end{proof}

\begin{examples}
\begin{enumerate}[(i)]
\item
   Let $B_n$ be the $1$-graph with a single vertex $v$ and edges $f_1 , \ldots , f_n$
    (see Example~\ref{ex:pex}(v)). Let $\mathbb{Z}_n$ act on $B_n$ by cyclicly
    permuting the edges. Then $C^*(B_n) \times \ZZ_n \cong C^*(B_n \times \ZZ_n)$.
\item Let $\Lambda$ be a $k$-graph, let $F$ be a countable abelian group, and $b :
    \Lambda \to F$ be a functor. Then the skew product graph $\Lambda \times_b F$
    carries a natural $F$-action $\tau$ (see \cite[Remark~5.6]{KP2}) given by
    $\tau_g(\lambda, h) = (\lambda, g+h)$. We have
    \[
        C^*((\Lambda \times_b F) \times_\tau F)
            \cong C^*(\Lambda \times_b F) \rtimes_{\tilde\tau} F
            \cong C^*(\Lambda) \hatimes\mathcal{K}(\ell^2(F)).
    \]
\end{enumerate}
\end{examples}

\begin{thm} \label{thm:pgraphstructure}(cf.\ \cite[Proposition 3.5]{FPS})
Let $P = \NN^k \times F$ where $F$ is a countable abelian group. Suppose that $\Lambda$ is
a $P$-graph, and let $\Gamma$ denote the sub-$k$-graph $d^{-1}(\NN^k \times \{0\})$ of
$\Lambda$. For each $g \in F$ and $v \in \Lambda^0$ the sets $v\Lambda^{(0,g)}$ and
$\Lambda^{(0,g)} v$ are singletons. Moreover, there is an action $\rho$ of $F$ on
$\Gamma$ such that for all $\lambda \in \Gamma$ and $g \in F$, the unique elements $\mu
\in r(\lambda)\Lambda^{(0,g)}$ and $\nu \in s(\lambda)\Lambda^{(0,g)}$ satisfy $\mu
\rho_g(\lambda) = \lambda \nu$. Furthermore, $\Lambda$ is isomorphic to the $P$-graph
$\Gamma \times_\rho F$ of Lemma~\ref{lem:FPS}.
\end{thm}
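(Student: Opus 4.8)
The plan is to establish the three assertions---the singleton property, the existence of the action $\rho$, and the isomorphism---in turn, following \cite[Proposition~3.5]{FPS}.

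First I would prove the singleton claims, which use only the factorisation property together with the fact that $(0,g)$ is invertible in the monoid $P$ (with inverse $(0,-g)$, since $F$ is a group). For existence, since $v \in \Lambda^0$ has degree $(0,0) = (0,g) + (0,-g)$, the factorisation property applied to $v$ produces $\mu \in v\Lambda^{(0,g)}$ and $\nu \in \Lambda^{(0,-g)}$ with $v = \mu\nu$; in particular $v\Lambda^{(0,g)} \ne \emptyset$, with no recourse to row-finiteness or a no-sources hypothesis. For uniqueness, given any $\mu \in v\Lambda^{(0,g)}$ I would factorise the vertex $s(\mu)$ as a degree-$(0,-g)$ morphism $\mu_1$ followed by a degree-$(0,g)$ morphism; then $\mu\mu_1$ has degree $(0,0)$ and range $v$, so $\mu\mu_1 = v$. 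Thus $v = \mu\mu_1$ is a factorisation of $v$ of type $(0,g) + (0,-g)$, and uniqueness of that factorisation forces $\mu$ to be its first factor, which is independent of the initial choice of $\mu$. Hence $v\Lambda^{(0,g)}$ is a singleton; the argument for $\Lambda^{(0,g)}v$ is symmetric, using sources in place of ranges.

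Next I would construct the action. Given $\lambda \in \Gamma$ and $g \in F$, let $\nu$ be the unique element of $s(\lambda)\Lambda^{(0,g)}$ and factorise $\lambda\nu$ (of degree $d(\lambda) + (0,g)$) as $\mu\,\rho_g(\lambda)$ with $d(\mu) = (0,g)$ and $d(\rho_g(\lambda)) = d(\lambda)$; since $r(\mu) = r(\lambda)$, the factor $\mu$ is forced to be the unique element of $r(\lambda)\Lambda^{(0,g)}$, so $\rho_g(\lambda) \in \Gamma$ is characterised by the stated relation $\mu\rho_g(\lambda) = \lambda\nu$. I would then verify the action axioms. That $\rho_0 = \id$ is immediate. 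Functoriality $\rho_g(\lambda\lambda') = \rho_g(\lambda)\rho_g(\lambda')$ follows by applying the relation to $\lambda$, to $\lambda'$, and to $\lambda\lambda'$, observing that the source-connector for $\lambda$ is the range-connector for $\lambda'$, and then left-cancelling the common degree-$(0,g)$ prefix (left-cancellation being itself a consequence of uniqueness of factorisation). The homomorphism property $\rho_g \circ \rho_h = \rho_{g+h}$ comes from stacking connectors: the unique degree-$(0,g+h)$ morphism out of a vertex factors as the degree-$(0,h)$ connector followed by the degree-$(0,g)$ connector, and substituting this into the defining relations for $\rho_h$ and $\rho_g$ yields the relation defining $\rho_{g+h}$, whence uniqueness of the singletons gives equality. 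Since each $\rho_g$ preserves degrees and composition and has inverse $\rho_{-g}$, it is an automorphism of the $k$-graph $\Gamma$.

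Finally, for the isomorphism I would factorise an arbitrary $\eta \in \Lambda$ of degree $(m,g)$ as $\eta = \lambda\nu$ with $\lambda \in \Gamma$ and $d(\nu) = (0,g)$; uniqueness of this factorisation makes $\eta \mapsto (\lambda, g)$ a bijection $\Lambda \to \Gamma \times F$ that manifestly preserves degrees and ranges. The heart of the proof is compatibility with composition and source: for composable $\eta = \lambda\nu$ and $\eta' = \lambda'\nu'$ one rewrites $\eta\eta' = \lambda(\nu\lambda')\nu' = (\lambda\tilde\lambda)(\tilde\nu\nu')$, where $\nu\lambda' = \tilde\lambda\tilde\nu$ re-orders a degree-$(0,g)$ morphism past a $\Gamma$-morphism, and one identifies $\tilde\lambda$ with the appropriate $\rho$-translate of $\lambda'$ via the defining relation, exhibiting the product as the crossed-product multiplication of Lemma~\ref{lem:FPS}. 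I expect this identification to be the main obstacle: one must commute the connecting degree-$(0,g)$ morphisms through $\Gamma$ consistently and track the direction in which they shift vertices, so that the source formula $s(\lambda,g) = (s(\rho_{-g}(\lambda)),0)$ and the twist $\rho_g$ in the composition rule emerge with exactly the signs recorded in Lemma~\ref{lem:FPS}. This bookkeeping is precisely the content of \cite[Proposition~3.5]{FPS}, which I would follow \emph{mutatis mutandis} to conclude that $\Lambda \cong \Gamma \times_\rho F$.
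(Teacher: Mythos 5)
Your proposal is correct and takes essentially the same route as the paper's own proof: the singleton claims follow from unique factorisation of vertices into degree $(0,g)$ and $(0,-g)$ parts, the action $\rho$ is defined by the connector relation and checked to be functorial and multiplicative via uniqueness of factorisations, and the isomorphism is the map $\eta \mapsto \big(\eta((0,0),(n,0)), g\big)$ with inverse $(\lambda, g) \mapsto \lambda\mu$ where $\mu$ is the unique element of $s(\lambda)\Lambda^{(0,g)}$. If anything you supply more detail than the paper, which asserts without verification that this map is an isomorphism of $P$-graphs; the sign-and-direction bookkeeping you single out as the main obstacle (matching the twist in the composition rule of Lemma~\ref{lem:FPS} against the defining relation for $\rho$) is indeed the only delicate point in completing the argument.
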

\begin{proof}
The factorisation property ensures that each vertex $v \in \Lambda^0$ has unique
factorisations $v = \mu\nu$ with $\mu \in \Lambda^{(0,g)}$ and $\nu \in \Lambda^{(0,
-g)}$. Hence $v \Lambda^{(0,g)} = \{ \mu \}$, and similarly $\Lambda^{(0,g)}v = \{\nu\}$.

The map $\rho$ preserves degree by definition. It remains to show that $\rho_{g}$  is a
functor for each $g$ and $\rho_g \circ \rho_h = \rho_{g+h}$. Fix $\lambda_1, \lambda_2
\in \Gamma$ such that $\lambda_2 \lambda_1 \in \Gamma$ and $g \in F$. Let $v_0 =
s(\lambda_1)$, $v_1 = r(\lambda_1) = s(\lambda_2)$ and $v_2 = r(\lambda_2)$. Let $\mu_i$
be the unique element of $v_i\Lambda^{(0,g)}$ for $i=0,1,2$. Then
\[
\mu_i \rho_g(\lambda_i) = \lambda_i \mu_{i-1} \text{ for } i=1,2.
\]

Combining the two equations we get
\[
\mu_2 \rho_g(\lambda_2) \rho_g(\lambda_1)
    = \lambda_2\mu_1 \rho_g(\lambda_1) = \lambda_2 \lambda_1 \mu_0,
\]
and hence $\rho_g(\lambda_2\lambda_1) = \rho_g(\lambda_2)\rho_g(\lambda_1)$ by uniqueness
of factorisations. A similar argument shows that $\rho_g \circ \rho_g = \rho_{g+h}$ for
all $g,h \in F$.

For $\lambda \in \Lambda^{(n,g)}$ define $\psi(\lambda) = \big(\lambda
\big((0,0),(n,0)\big), g \big) \in \Gamma \times_\rho F$. Then $\psi$ is an isomorphism
of $P$-graphs. Its inverse is given as follows: let $(\lambda, g) \in \Gamma \times_\rho
F$ and $\mu$ be the unique element in $s(\lambda) \Lambda^{(0,g)}$; then
$\psi^{-1}(\lambda,g) = \lambda \mu$.
\end{proof}

\begin{cor}
Suppose that $F$ is a countable abelian group. Then the only possible $F$-graphs are
disjoint unions of quotients of $\Omega_F$ by subgroups of $F$.
\end{cor}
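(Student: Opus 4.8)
The plan is to apply the structure theorem, Theorem~\ref{thm:pgraphstructure}, in the degenerate case $k = 0$, and then to analyse the resulting crossed-product $P$-graph orbit by orbit. An $F$-graph is precisely a $P$-graph for $P = \NN^0 \times F = F$. With $k = 0$ the sub-$k$-graph $\Gamma = d^{-1}(\{0\})$ is a $0$-graph, that is, nothing but the vertex set $\Lambda^0$ equipped with its identity morphisms. Theorem~\ref{thm:pgraphstructure} then supplies an action $\rho$ of $F$ on the set $\Lambda^0$ by bijections together with an isomorphism $\Lambda \cong \Lambda^0 \times_\rho F$ of $F$-graphs, where $\Lambda^0 \times_\rho F$ is the $F$-graph of Lemma~\ref{lem:FPS}.

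First I would record that this $F$-graph splits as a disjoint union over the $\rho$-orbits. For each morphism $(v, g)$ of $\Lambda^0 \times_\rho F$, both $r(v,g)$ and $s(v,g)$ are vertices lying in the $\rho$-orbit of $v$; hence no morphism joins vertices in distinct orbits, and $\Lambda^0 \times_\rho F$ is the disjoint union of the full sub-$F$-graphs supported on the orbits of $\rho$. Writing $\{O_i\}$ for these orbits, we obtain $\Lambda \cong \bigsqcup_i (O_i \times_\rho F)$, where on each summand $\rho$ is restricted to $O_i$ and acts transitively.

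Next I would identify each transitive piece. Fixing a vertex $v_i \in O_i$ and setting $H_i := \{g \in F : \rho_g(v_i) = v_i\}$, the fact that $F$ is abelian guarantees that $H_i$ is the stabiliser of every vertex in $O_i$, so that $\rho_g(v_i) \mapsto g + H_i$ is an isomorphism of $F$-sets from $O_i$ onto $F/H_i$ with its translation action $\bar\tau$. Transporting the degree, range, source and composition through this bijection yields $O_i \times_\rho F \cong (F/H_i) \times_{\bar\tau} F$. It remains to recognise $(F/H) \times_{\bar\tau} F$ as a quotient of $\Omega_F$: the group $F$ acts freely on $\Omega_F$ by the diagonal translation $h \cdot (p,q) = (p+h, q+h)$, so for any subgroup $H \le F$ the orbit space $\Omega_F / H$ inherits the structure of an $F$-graph, and I would check that $[(p,q)] \mapsto (pH, q - p)$ is a well-defined isomorphism $\Omega_F / H \cong (F/H) \times_{\bar\tau} F$. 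When $H = \{0\}$ this specialises to the isomorphism $F \times_\tau F \cong \Omega_F$ of Example~\ref{eq:cp=omega}. Combining these identifications gives $\Lambda \cong \bigsqcup_i \Omega_F / H_i$, as required.

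The routine but slightly delicate step---and the one I expect to be the main obstacle---is the final bookkeeping: verifying that the diagonal $F$-action on $\Omega_F$ is compatible with the degree, range and source maps so that $\Omega_F / H$ really is an $F$-graph, and then matching conventions (in particular the sign in the translation action $\bar\tau$) so that $[(p,q)] \mapsto (pH, q-p)$ respects source as well as range. Everything else is a direct application of Theorem~\ref{thm:pgraphstructure} together with an orbit--stabiliser argument.
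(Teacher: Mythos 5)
Your proof is correct and follows essentially the same route as the paper's: apply Theorem~\ref{thm:pgraphstructure} with $k=0$ to write $\Lambda \cong \Lambda^0 \times_\rho F$, decompose into $\rho$-orbits (which are exactly the connected components), identify each transitive piece with $(F/H)\times_{\bar\tau} F$ via orbit--stabiliser, and invoke the identification of Example~\ref{eq:cp=omega}. The only difference is that you spell out the final step---the explicit isomorphism $\Omega_F/H \cong (F/H)\times_{\bar\tau}F$, $[(p,q)]\mapsto (pH,\,q-p)$, including the diagonal action and sign conventions---which the paper leaves to the reader.
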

\begin{proof}
Let $\Lambda$ be a $F$-graph. Then Theorem~\ref{thm:pgraphstructure} applied in the case
$k=0$ shows that $\Lambda \cong \Gamma \times_\rho F$ where $\Gamma$ is a $0$-graph,
which is just a countable set. The group $F$ then acts on $\Gamma$, and each orbit is of
the form $F/H$ for some subgroup $H$ of $F$. Since the orbits of $\rho$ correspond to the
connected components of $\Gamma \times_\rho F$ each component of $\Lambda$ is isomorphic
to $F/H \times_{\tau_H} F$ where $\tau_H$ is the action of $F$ on $F/H$ induced by
translation. The result then follows from the identification of $\Omega_F$ with $F
\times_\tau F$ given in Example~\ref{eq:cp=omega}
\end{proof}

\section{Gradings of \texorpdfstring{$P$}{P}-graph algebras induced by functors} \label{sec:elvis}

Let $P$ be a finitely-generated, cancellative abelian monoid of the form $P \cong \NN^k
\times \ZZ_2^l$. We will be particularly interested in gradings of twisted $P$-graph
algebras that arise from functors from the underlying $P$-graphs into $\ZZ_2$.

\begin{lem}\label{lem:induced grading}
Let  $\Lambda$ be a $P$-graph, let $\delta : \Lambda \to \ZZ_2$ be a functor and let $c$
be a $\TT$-valued $2$-cocycle on $\Lambda$. Then there is a grading automorphism
$\alpha_\delta$ of $C^*(\Lambda, c)$ such that
\begin{equation}\label{eq:induced grading}
\alpha_\delta(s_\lambda) = (-1)^{\delta(\lambda)} s_\lambda\quad\text{ for all $\lambda \in \Lambda$.}
\end{equation}
For $i \in \ZZ_2$, we have $C^*(\Lambda, c)_i = \clsp\{s_\mu s^*_\nu : \delta(\mu) -
\delta(\nu) = i\}$.
\end{lem}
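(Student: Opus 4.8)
The plan is to build $\alpha_\delta$ from the universal property of $C^*(\Lambda, c)$. First I would define a candidate family $t_\lambda := (-1)^{\delta(\lambda)} s_\lambda$ and check that $\{t_\lambda : \lambda \in \Lambda\}$ is again a Cuntz--Krieger $(\Lambda, c)$-family. Conditions (CK1) and (CK3) are immediate since the signs $(-1)^{\delta(v)}$ on vertices are $1$ (as $\delta$ is a functor, $\delta(v) = 0$ for $v \in \Lambda^0$) and since $(-1)^{2\delta(\lambda)} = 1$. For (CK2), the key point is that $\delta$ is a \emph{functor}, so $\delta(\mu\nu) = \delta(\mu) + \delta(\nu)$; hence $t_\mu t_\nu = (-1)^{\delta(\mu) + \delta(\nu)} s_\mu s_\nu = (-1)^{\delta(\mu\nu)} c(\mu,\nu) s_{\mu\nu} = c(\mu,\nu) t_{\mu\nu}$, as required. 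For (CK4), since each edge $\lambda \in v\Lambda^p$ contributes $t_\lambda t^*_\lambda = s_\lambda s^*_\lambda$ (the signs cancel), the sum is unchanged. By the universal property there is a homomorphism $\alpha_\delta : C^*(\Lambda, c) \to C^*(\Lambda, c)$ with $\alpha_\delta(s_\lambda) = (-1)^{\delta(\lambda)} s_\lambda$. Applying the same construction to $\delta$ again shows $\alpha_\delta^2$ fixes each generator, so $\alpha_\delta^2 = \id$; in particular $\alpha_\delta$ is an automorphism and a grading, establishing~\eqref{eq:induced grading}.

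\textbf{Identifying the graded components.}

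For the final statement, recall from~\eqref{eq:Ai calc} that $C^*(\Lambda, c)_i$ is determined by the eigenspace decomposition of $\alpha_\delta$. I would first observe that $C^*(\Lambda, c)$ is densely spanned by monomials of the form $s_\mu s^*_\nu$: this is the standard spanning result for (twisted) $P$-graph algebras, which follows because products $s_\mu s^*_\nu s_\kappa s^*_\eta$ can be rewritten as scalar multiples of such monomials using (CK2)--(CK4), exactly as in the untwisted case. On such a monomial we compute $\alpha_\delta(s_\mu s^*_\nu) = (-1)^{\delta(\mu)}(-1)^{-\delta(\nu)} s_\mu s^*_\nu = (-1)^{\delta(\mu) - \delta(\nu)} s_\mu s^*_\nu$, so $s_\mu s^*_\nu$ is homogeneous of degree $\delta(\mu) - \delta(\nu) \in \ZZ_2$. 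Consequently $\clsp\{s_\mu s^*_\nu : \delta(\mu) - \delta(\nu) = i\} \subseteq C^*(\Lambda, c)_i$ for each $i$.

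\textbf{The reverse inclusion.}

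For the reverse containment I would use the projections onto the graded components afforded by~\eqref{eq:Ai calc}: the map $a \mapsto \tfrac{1}{2}(a + (-1)^i \alpha_\delta(a))$ is a bounded idempotent with range $C^*(\Lambda, c)_i$. Since the monomials $s_\mu s^*_\nu$ span a dense subalgebra and each is sent by this projection either to itself (when $\delta(\mu) - \delta(\nu) = i$) or to $0$ (otherwise), the image of the dense span under the projection is exactly the closed span of the degree-$i$ monomials. Because the projection is continuous and surjective onto $C^*(\Lambda, c)_i$, we obtain $C^*(\Lambda, c)_i \subseteq \clsp\{s_\mu s^*_\nu : \delta(\mu) - \delta(\nu) = i\}$, giving equality. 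The only mild subtlety, and the step I would be most careful about, is ensuring the spanning result for twisted $P$-graph algebras holds in full generality here; but this is routine given the Cuntz--Krieger relations, so no genuine obstacle arises.
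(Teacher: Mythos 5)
Your proof is correct and follows essentially the same route as the paper: obtain $\alpha_\delta$ from the universal property applied to the signed family $(-1)^{\delta(\lambda)}s_\lambda$, then identify the graded components via the averaging formula~\eqref{eq:Ai calc} together with the fact that the monomials $s_\mu s^*_\nu$ densely span $C^*(\Lambda,c)$. The only difference is one of detail: you spell out the Cuntz--Krieger verification and the spanning argument that the paper leaves implicit, which is a reasonable thing to do since the usual minimal-common-extension argument needs the (directed) pre-order on $P = \NN^k \times F$ rather than lattice structure.
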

\begin{proof}
The universal property of $C^*(\Lambda, c)$ yields an automorphism $\alpha_\delta$
satisfying~\eqref{eq:induced grading}. For $\mu,\nu \in \Lambda$ and $j \in \ZZ_2$, we
have
\[
\frac{s_\mu s^*_\nu + (-1)^j\alpha(s_\mu s^*_\nu)}{2}
     = \begin{cases}
        s_\mu s^*_\nu &\text{ if $\delta(\mu) - \delta(\nu) = j$}\\
        0 &\text{ if $\delta(\mu) - \delta(\nu) = j+1$,}
     \end{cases}
\]
so the description of the $C^*(\Lambda, c)_i$ follows from~\eqref{eq:Ai calc}.
\end{proof}

\begin{ntn}\label{ntn:delta}
Consider nonnegative integers $k,l$, and let $P = \NN^k \times \ZZ_2^l$, regarded as a
finitely generated abelian monoid, with generators $E_P = \{g_1, \dots, g_{k+l}\}$. Let
$\pi : P \to \ZZ_2$ be the unique homomorphism such that $\pi(g_i) = 1$ for all $i \le
k+l$. Given a $P$-graph $\Lambda$, there is a functor $\delta_\Lambda : \Lambda \to
\ZZ_2$ such that
\begin{equation}\label{eq:deltaLambda}
\delta_\Lambda(\lambda) = \pi(d(\lambda))\quad\text{ for all $\lambda \in \Lambda$.}
\end{equation}
\end{ntn}

The following examples illustrate the connection between gradings of $P$-graph
$C^*$-algebras and twisted $P$-graph algebras. Specifically, the graded tensor product of
graded $P$-graph algebras can frequently be realised as a twisted $C^*$-algebra of the
cartesian-product graph. We return to this in Theorem~\ref{thm:swapping}.

\begin{examples} \label{ex:above}
\begin{enumerate}[(i)]
\item\label{it:eg Tk} For $k \ge 1$ recall from Examples~\ref{ex:pex}(iv) that $T_k$
    denotes the $k$-graph $\NN^k$ with degree functor given by the identity functor,
    and $C^*$-algebra isomorphic to $C(\TT^k)$. Endow $C^*(T_1)$ with the grading
    automorphism $\alpha_{\delta_{T_1}}$ induced by $\delta_{T_1}$ (see
    Notation~\ref{ntn:delta}); so the unitary generator is homogeneous of odd degree.
    Then the graded tensor product $C^*(T_1) \hatimes C^*(T_1)$ is not abelian. To
    see this, let $s_1$ denote the unitary generator of $C^*(T_1)$. Then
    \[
    (s_1 \hatimes 1) (1 \hatimes s_1) = (s_1 \hatimes s_1) \neq -
    (s_1 \hatimes s_1) = (1 \hatimes s_1) (s_1 \hatimes 1).
    \]
    In particular, the graded tensor product $C^*(T_1) \hatimes C^*(T_1)$ is not
    isomorphic to $C^*(T_2)$.  

    Instead, we claim that $C^*(T_1) \hatimes C^*(T_1) \cong C^*(T_2, c)$ with
    grading automorphism $\alpha_{\delta_{T_2}}$ and twisting $2$-cocycle $c :
    (T_2)^{*2} \to \TT$ with values in $\{\pm 1\}$ given by
    \[
    c(m,n) = (-1)^{m_2 n_1} \text{ for } (m,n) \in \NN^2 \times \NN^2 = (T_2)^{*2}.
    \]

    To see this, for $n \in \NN$ let $s_n \in C^*(T_1)$ denote the corresponding
    generator. Define elements $\{t_{(m,n)} : (m,n) \in \NN^2\} \subseteq C^*(T_1)
    \hatimes C^*(T_1)$ by $t_{(m,n)} := s_m \hatimes s_n$. Routine calculations using
    the definition of multiplication and involution in the graded tensor product show
    that the $t_{(m,n)}$ are a Cuntz--Krieger $(T_2, c)$-family; for example, we can
    check~(CK2) as follows:
    \[
    t_{(m,n)} t_{(p,q)} = (s_m \hatimes s_n) (s_p \hatimes s_q)
        = (-1)^{np} (s_m s_p \hatimes s_n s_q)
        = c((m,n), (p,q)) t_{m+p, n+q}.
    \]
    The universal property of $C^*(T_2, c)$ gives a homomorphism $\psi : C^*(T_2, c)
    \to C^*(T_1) \hatimes C^*(T_1)$, and an application of the gauge-invariant
    uniqueness theorem (Theorem~\ref{prp:giut}) shows that $\psi$ is an isomorphism.
    Finally, one checks on generators that $\psi$ intertwines the grading
    automorphisms.

\item\label{it:Z2 Cliff1} Recall that $Z_2$ denotes $\ZZ_2$ considered as a
    $\ZZ_2$-graph as in Examples~\ref{ex:pex}(ii). Let $\delta :=    \delta_{Z_2}$ be
    the identity map $Z_2 \to \ZZ_2$; so the associated grading automorphism
    $\alpha_\delta$ of $C^*(Z_2)$ makes the generator $u$ homogeneous of degree one.
    Then there is an isomorphism $(C^*(Z_2), \alpha_\delta)\cong \Cliff1$ that takes
    $s_1$ to $(1,-1)$ and $s_0$ to $(1,1)$.

\item\label{it:Cliff algs realised} Consider the graded tensor product $(C^*(Z_2),
    \alpha_\delta) \hatimes (C^*(Z_2), \alpha_\delta)$. As above, this is, in
    general, a nonabelian $C^*$-algebra. Indeed, let $c \in Z^2(Z_2 \times Z_2, \TT)$
    be the cocycle given by $c(m,n) = (-1)^{m_2n_1}$. Write $\delta := \delta_{Z_2}$
    and recall that $\delta_{Z_2 \times Z_2} : Z_2 \times Z_2 \to \ZZ_2$ satisfies
    $\delta(i,j) = i+j$. Then
    \[
        (C^*(Z_2), \alpha_\delta) \hatimes (C^*(Z_2), \alpha_\delta)
            \cong \big(C^*(Z_2 \times Z_2, c), \alpha_{\delta_{Z_2 \times Z_2}}\big).
    \]
    Since $(C^*(Z_2), \alpha_\delta) \cong \Cliff1$ as graded algebras, it follows
    that there is a graded isomorphism $C^*(Z_2 \times Z_2, c) \cong \Cliff2$.
    Indeed, we will see in Corollary~\ref{prp:graded} that for any $l \ge 1$, if $c$
    is the 2-cocycle on the $\ZZ^l_2$-graph $Z^l_2$ described at~\eqref{eq:cdef},
    then $\Cliff{n} \cong C^*(Z_2^n, c)$ as graded $C^*$-algebras, where $C^*(Z_2^n,
    c)$ carries the grading induced by $\delta_{Z_2^n}$ as above.

\item\label{eg:cp by alpha} Expanding on~(\ref{it:eg Tk}), let $A$ be a $C^*$-algebra
    with grading automorphism $\alpha$. Then, as at~\eqref{eq:beta-k}, $A
    \rtimes_\alpha \ZZ$ has a grading $\beta^1$ given by $\beta^1(i_A(a)i_\ZZ(n)) =
    (-1)^n i_A(\alpha(a))i_\ZZ(n)$; that is, the copy of $A$ retains its given
    grading, and the generating unitary in the copy of $C^*(\ZZ)$ is odd. Let $T_1$
    denote $\NN$ regarded as a $1$-graph. The universal property of $A \rtimes_\alpha
    \ZZ$ and straightforward computations show that the map $i_A(a)i_\ZZ(n) \mapsto a
    \hatimes s_n$ defines a graded isomorphism $A \rtimes_\alpha \ZZ \cong A \hatimes
    C^*(T_1)$. We return to this in the context of graph $C^*$-algebras in
    Examples~\ref{ex:above2} (\ref{eg:more cp by alpha}).
\end{enumerate}
\end{examples}

In Section~\ref{sec:graded tensor}, motivated by Examples~\ref{ex:above}(\ref{it:eg
Tk})~and~(\ref{it:Cliff algs realised}), we will investigate graded tensor products of
graded $P$-graph algebras, and show that these often coincide with twisted $C^*$-algebras
of cartesian-product graphs. To do this, we first need an alternative description of the
graded $C^*$-algebras of appropriate $P$-graphs as universal graded $C^*$-algebras.

Let $\Lambda$ be a $P$-graph where $P \cong \NN^k \times \ZZ_2^l$ and let $E_P$ be the
standard generators $\{ e_i : 1 \le i \le k+l \}$ of $P$. We call elements of $\Lambda$
with degree in $E_P$ \emph{edges}.

\begin{thm} \label{thm:differentgenerators}
Let $P = \NN^k \times \ZZ_2^l$ and let $\Lambda$ be a $P$-graph. Let $c_\Lambda$ be the
2-cocycle of Proposition~\ref{prp:clambdadef}. Then for each $e \in E_p$ such that $2e =
0$ and each $\lambda \in \Lambda^e$, there is a unique $\lambda^* \in
s(\lambda)\Lambda^e$. This $\lambda^*$ satisfies $s(\lambda^*) = r(\lambda)$,
$\lambda\lambda^* = r(\lambda)$, and $\lambda^*\lambda = s(\lambda)$. Moreover there
exists a $C^*$-algebra $D$ such that
\begin{enumerate}
\item $D$ is generated by partial isometries $\{t_\lambda : d(\lambda) \in E_P\}$ and
    mutually orthogonal projections $\{ p_v : v \in \Lambda^0 \}$ such that
    \begin{itemize}
    \item[(a)] $t_\lambda t_\mu = - t_{\mu'} t_{\lambda'}$ for all $\lambda , \lambda'
        \in \Lambda^{e}$, $\mu , \mu' \in \Lambda^{e'}$ with $\lambda \mu = \mu'
        \lambda'$, $e,e' \in E_P$ and $e \neq e'$;
    \item[(b)] if $d(\lambda ) = e \in E_P$ and $2e=0$ then $t_\lambda^* =
        t_{\lambda^*}$;
    \item[(c)] $t_\lambda^* t_\lambda = p_{s(\lambda)}$ for all $\lambda \in
        \Lambda^{e}$, $e \in E_P$;
    \item[(d)] for all $v \in \Lambda^0$ and $e \in E_P$ we have
    \[
    p_v = \sum_{\lambda \in v \Lambda^{e}} t_\lambda t_\lambda^* .
    \]
    \end{itemize}
\item $D$ is universal in the sense that for any other
    $C^*$-algebra $D'$ generated by elements $t'_\lambda$ satisfying (a)--(d), there
    is a homomorphism $D \to D'$ satisfying $t_\lambda \mapsto t'_\lambda$.
\end{enumerate}
The $C^*$-algebra $D$ carries a grading automorphism $\alpha$ satisfying
$\alpha(t_\lambda) = -t_\lambda$ whenever $d(\lambda) \in E_P$, and $\alpha(p_v) = p_v$
for all $v \in \Lambda^0$.  Moreover, if $\alpha_{\delta_\Lambda}$ is the grading of
$C^*(\Lambda, c_\Lambda)$ obtained from Lemma~\ref{lem:induced grading} applied to the
functor~\eqref{eq:deltaLambda}, then there is a graded isomorphism $\pi : C^*(\Lambda,
c_\Lambda) \to D$ such that
\[
\pi ( s_v ) = p_v \text{ for all } v \in \Lambda^0 \quad \text{ and } \quad
    \pi ( s_\lambda ) = t_\lambda \text{ whenever } d(\lambda) \in E_P.
\]
\end{thm}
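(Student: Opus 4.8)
The plan is to identify $C^*(\Lambda, c_\Lambda)$, with its generators indexed by edges and vertices, as a model for the universal algebra $D$, and then to transport the grading across this identification. First I would dispose of the preliminary claim about $\lambda^*$. Since $P = \NN^k \times \ZZ_2^l$ has exactly the form required by Theorem~\ref{thm:pgraphstructure} with $F = \ZZ_2^l$, any generator $e \in E_P$ with $2e = 0$ is of the form $(0,g)$ for some $g \in F$, and the theorem shows that $s(\lambda)\Lambda^e$ is a singleton $\{\lambda^*\}$. A degree count then forces $\lambda\lambda^*$ and $\lambda^*\lambda$ to have degree $2e = 0$, so they are the vertices $r(\lambda)$ and $s(\lambda)$ respectively, which gives the stated relations.

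Next I would verify that the elements $s_v$ ($v \in \Lambda^0$) and $s_\lambda$ ($d(\lambda) \in E_P$) of $C^*(\Lambda, c_\Lambda)$ satisfy (a)--(d). Relations (c) and (d) are immediate from (CK3) and (CK4). For (a), applying (CK2) to $\lambda\mu = \mu'\lambda'$ reduces the claim to $c_\Lambda(\lambda,\mu) = -c_\Lambda(\mu',\lambda')$; since $c_\Lambda(\lambda,\mu) = c_\kappa(\rho(e),\rho(e'))$ and $c_\Lambda(\mu',\lambda') = c_\kappa(\rho(e'),\rho(e))$, and these values lie in $\{\pm1\}$, this is exactly the assertion that the antisymmetrising bicharacter from the proof of Lemma~\ref{lem:permute} takes the value $-1$ on the distinct basis vectors $\rho(e), \rho(e')$. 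For (b) the key point is that $\kappa(\rho(e),\rho(e)) = 0$, so $c_\Lambda(\lambda,\lambda^*) = 1$; combining (CK2), (CK3) and the vertex normalisation $c_\Lambda(r(\cdot),\cdot) = 1$ then yields $s_{\lambda^*} = s_\lambda^*$. The existence of $D$ as a universal $C^*$-algebra is a standard consequence of the relations forcing the generators to be partial isometries (cf.\ \cite[Theorem~2.10]{Loring}), and one checks that negating every edge generator preserves (a)--(d), so universality supplies the grading automorphism $\alpha$ with $\alpha(t_\lambda) = -t_\lambda$ and $\alpha(p_v) = p_v$.

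It then remains to produce mutually inverse graded homomorphisms. The universal property of $D$ and the previous paragraph give a graded homomorphism $\rho : D \to C^*(\Lambda, c_\Lambda)$ with $\rho(t_\lambda) = s_\lambda$ and $\rho(p_v) = s_v$; factoring each $\lambda$ into edges shows every $s_\lambda$ is a scalar multiple of a product of edge generators, so $\rho$ is surjective. For the reverse direction I would build a Cuntz--Krieger $(\Lambda, c_\Lambda)$-family in $D$ by setting $T_\lambda := t_{\lambda_1}\cdots t_{\lambda_n}$ for an edge factorisation $\lambda = \lambda_1\cdots\lambda_n$. \textbf{The main obstacle is well-definedness:} any two edge factorisations of a fixed $\lambda$ differ by successive transpositions of adjacent edges of distinct degrees, each of which contributes a factor of $-1$ by relation (a), and one must check these signs are consistent and that the family satisfies $T_\mu T_\nu = c_\Lambda(\mu,\nu) T_{\mu\nu}$. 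This is precisely where the construction of $c_\Lambda$ via $\kappa$ in Proposition~\ref{prp:clambdadef} is used: the sign incurred in reordering the concatenated factorisation of $\mu\nu$ into a standard one is exactly $c_\Lambda(\mu,\nu)$.

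Once (CK1)--(CK4) are checked, the universal property of $C^*(\Lambda, c_\Lambda)$ yields $\pi : C^*(\Lambda, c_\Lambda) \to D$ with $\pi(s_\lambda) = T_\lambda$, and comparing $\pi \circ \rho$ and $\rho \circ \pi$ on generators shows they are mutually inverse. Alternatively one can bypass the explicit inverse: equip $D$ with the gauge action $\gamma_\chi(t_\lambda) = \chi(d(\lambda))t_\lambda$ (here (b) is preserved because $\chi(e)^2 = \chi(2e) = 1$ when $2e = 0$), note that $\pi$ intertwines this with the gauge action on $C^*(\Lambda,c_\Lambda)$, observe that $p_v \neq 0$ since $\rho(p_v) = s_v \neq 0$, and invoke the gauge-invariant uniqueness theorem (Proposition~\ref{prp:giut}) to get injectivity, with surjectivity clear as the edges generate $D$. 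In either case $\pi$ is graded, since on edge generators $\alpha(t_\lambda) = -t_\lambda = \pi\big(\alpha_{\delta_\Lambda}(s_\lambda)\big)$ (as $\delta_\Lambda(\lambda) = \pi(d(\lambda)) = 1$ for edges) and $\pi$ fixes the $p_v$.
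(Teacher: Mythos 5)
Your proposal is correct and follows essentially the same route as the paper: verify that the canonical generators of $C^*(\Lambda, c_\Lambda)$ satisfy (a)--(d) using the values of $c_\Lambda$ on pairs of generators in $E_P$, obtain the grading on $D$ from its universal property, and build the inverse by sending each $s_\lambda$ to a product of edge generators along a (canonically ordered) edge factorisation, the cocycle $c_\kappa$ accounting for the signs produced by relation (a). The only cosmetic differences are that you deduce the singleton claim for $s(\lambda)\Lambda^e$ from Theorem~\ref{thm:pgraphstructure} where the paper argues directly from the factorisation property, and you offer an optional gauge-invariant-uniqueness finish (Proposition~\ref{prp:giut}) where the paper simply observes that the two universal-property maps are mutually inverse on generators.
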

\begin{proof}
For the first statement, suppose that $d(\lambda) = e$ with $2e = 0$. We have
$d(s(\lambda)) = 0 = e + e$ and so the factorisation property shows that there exist
$\lambda^*,\nu \in \Lambda^e$ such that $s(\lambda) = \lambda^*\nu$. Since $r(\lambda^*)
= s(\lambda)$, the pair $(\lambda, \lambda^*)$ is composable and since $\lambda\lambda^*
\in \Lambda^{2e} = \Lambda^0$ we have $s(\lambda^*) = \lambda\lambda^* = r(\lambda)$.
Hence, the pair $(\lambda^*, \lambda)$ is composable and $\lambda^*\lambda = r(\lambda)$.
The factorisation property ensures that this $\lambda^*$ is unique.

Let $\{ s_\lambda : \lambda \in \Lambda  \}$ be partial isometries generating
$C^*(\Lambda, c_\Lambda)$. Let $D$ be the universal $C^*$-algebra generated by a family
of $t_\lambda$'s and $p_v$'s satisfying (a)--(d). The universal property guarantees that
$D$ carries a grading automorphism $\alpha$ as described.

Define $T_\lambda = s_\lambda$ and $P_v = s_v$. Then the family $\{ P , T \}$ satisfies
conditions (c)~and~(d) above. Suppose that $e \in E_P$ satisfies $2e = 0$ and that
$\lambda \in \Lambda^e$. Using (CK2)--(CK4) and the first paragraph, we have
\[
s_{\lambda^*}
    = (s_\lambda^*  s_\lambda) s_{\lambda^*}
    = s_\lambda^* c_\Lambda(\lambda, \lambda^*) s_{\lambda \lambda^*} \
    = s_\lambda^* s_{r(\lambda)}
    = s_\lambda^* (s_\lambda s_\lambda^* )
    = s_\lambda^*
\]
and hence $T_{\lambda^*} = T_\lambda^*$.

It remains to check property (a): If $i \neq j$ then $c_\Lambda( e_i , e_j ) = -1$ if
$j<i$ and $c_\Lambda( e_i , e_j ) = 1$ otherwise. Let $\lambda , \lambda' \in \Lambda^{e_i}$, $\mu , \mu' \in
\Lambda^{e_j}$ with $\lambda \mu = \mu' \lambda'$. Suppose that $j<i$. Then
\begin{align*}
T_\lambda T_\mu &= s_\lambda s_\mu = c_{\Lambda} ( \lambda , \mu ) s_{\lambda \mu} = (-1)^1 s_{\lambda \mu} = - s_{\lambda \mu} \\
T_{\mu'} T_{ \lambda'} &= s_{\mu'} s_{\lambda'} = c_{\Lambda} ( \mu' , \lambda' ) s_{\mu'\lambda'} = (-1)^0 s_{\lambda \mu} = s_{\lambda \mu} .
\end{align*}

Hence $T_\lambda T_\mu = - T_{\mu'} T_{ \lambda'}$. If $i < j$, the same argument applies
(switching the $\lambda$'s and $\mu$'s). Hence by the universal property of $D$ there is
a map $\phi : D \to C^* ( \Lambda , c_\Lambda )$ such that $\phi ( p_v ) =s_v$ for all $v
\in \Lambda^0$  and $\phi ( t_\lambda ) = s_\lambda$ for all edges $\lambda$.

To show that $\phi$ has an inverse, for each $v \in \Lambda^0$ set $s_v = p_v$. For
$\lambda \in \Lambda$ with $d(\lambda) = \sum_{i=1}^{k+l} m_i e_i$, use the factorisation
property to write
\[
\lambda = \lambda_1^1 \cdots \lambda_1^{m_1} \lambda_2^1 \cdots \lambda_2^{m_2} \cdots \lambda_{k+l}^1 \cdots \lambda_{k+l}^{m_{k+l}}
\]
where $d ( \lambda_i^j ) = e_i$ for $j=1 , \ldots , m_i$  and $i=1 , \ldots , k+l$, and
define
\[
S_\lambda := t_{ \lambda_1^1} \cdots t_{\lambda_1^{m_1}} t_{\lambda_2^1} \cdots t_{\lambda_2^{m_2}} \cdots t_{\lambda_{k+l}^1} \cdots t_{\lambda_{k+l}^{m_{k+l}}} .
\]

Direct calculation shows that $\{S_\lambda : \lambda \in \Lambda\}$ is a Cuntz--Krieger
$( \Lambda , c_\Lambda )$-family in $D$. By the universal property of $C^* ( \Lambda ,
c_\Lambda )$ there is a map $\psi : C^* ( \Lambda , c_\Lambda ) \to D$ such that $\psi (
s_\lambda ) = S_\lambda$. By construction the maps $\psi$ and $\phi$ are mutually inverse
and so $D \cong C^* ( \Lambda , c_\Lambda )$.

The final assertion follows by the universality of $D$.
\end{proof}

\section{Graded tensor products of twisted \texorpdfstring{$P$}{P}-graph \texorpdfstring{$C^*$}{C*}-algebras}\label{sec:graded
tensor}

Let $P = \NN^k \times \ZZ_2^a$ and let $Q = \NN^l \times \ZZ_2^b$. Let $\Lambda$ be a
$P$-graph and $\Gamma$ a $Q$-graph. Then $\Lambda \times \Gamma$ is a $P \times Q$ graph.
The functor $\delta_{\Lambda \times \Gamma}$ and the $2$-cocycle $c_{\Lambda \times
\Gamma}$ defined in Proposition~\ref{prp:clambdadef} via~\eqref{eq:cdef}, still make
sense as we are using the map $\pi : P \times Q \to \ZZ_2^{(k+a)+(l+b)} $ to define
$c_{\Lambda\times \Gamma}$.

\begin{thm} \label{thm:swapping}
Let $P = \NN^k \times \ZZ_2^a$ and let $Q = \NN^l \times \ZZ_2^b$. Let $\Lambda$ be a
$P$-graph and $\Gamma$ a $Q$-graph. Then there is an isomorphism of graded $C^*$-algebras
\[
C^*(\Lambda \times \Gamma, c_{\Lambda \times \Gamma})  \cong
C^*(\Lambda, c_\Lambda) \hatimes C^*(\Gamma, c_\Gamma)
\]
with respect to the gradings $\alpha_{\delta_{\Lambda \times \Gamma}}$ and
$\alpha_{\delta_\Lambda} \hatimes \alpha_{\delta_{\Gamma}}$.
\end{thm}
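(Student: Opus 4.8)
The plan is to realise the isomorphism on generators via the universal property of $C^*(\Lambda\times\Gamma, c_{\Lambda\times\Gamma})$, and to establish injectivity using the gauge-invariant uniqueness theorem (Proposition~\ref{prp:giut}). First I would set $t_{(\lambda,\gamma)} := s_\lambda \hatimes s_\gamma \in C^*(\Lambda,c_\Lambda)\hatimes C^*(\Gamma,c_\Gamma)$, where $s_\lambda$ is homogeneous of degree $\delta_\Lambda(\lambda)$ and $s_\gamma$ of degree $\delta_\Gamma(\gamma)$, and verify that $\{t_{(\lambda,\gamma)}\}$ is a Cuntz--Krieger $(\Lambda\times\Gamma, c_{\Lambda\times\Gamma})$-family. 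Relations (CK1), (CK3) and (CK4) reduce to the corresponding relations in the two factors together with bilinearity of $\hatimes$: the Koszul signs appearing in $t_{(\lambda,\gamma)}^* t_{(\lambda,\gamma)}$ and in $t_{(\lambda,\gamma)} t_{(\lambda,\gamma)}^*$ occur as a product $(-1)^{2\,\delta_\Lambda(\lambda)\cdot\delta_\Gamma(\gamma)}=1$ and hence cancel, leaving $s_{s(\lambda)}\hatimes s_{s(\gamma)}$ and $s_\lambda s_\lambda^* \hatimes s_\gamma s_\gamma^*$ respectively, and (CK4) then follows from bilinearity applied to the two factor relations.

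The crux is relation (CK2). For composable $(\mu,\eta),(\nu,\zeta)$, the graded multiplication gives
\[
(s_\mu\hatimes s_\eta)(s_\nu\hatimes s_\zeta) = (-1)^{\delta_\Gamma(\eta)\cdot\delta_\Lambda(\nu)}\,c_\Lambda(\mu,\nu)\,c_\Gamma(\eta,\zeta)\, s_{\mu\nu}\hatimes s_{\eta\zeta},
\]
so I must show that the scalar here equals $c_{\Lambda\times\Gamma}\big((\mu,\eta),(\nu,\zeta)\big)$. Unwinding Proposition~\ref{prp:clambdadef}, this cocycle is $(-1)^{\kappa(\rho(d(\mu,\eta)),\rho(d(\nu,\zeta)))}$ with $\kappa$ as in~\eqref{eq:cdef}. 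Expanding the defining sum of $\kappa$ over the coordinate block coming from $P$ (indices $\le k+a$) and the block coming from $Q$ (indices $>k+a$), the within-$P$ pairs reproduce $c_\Lambda(\mu,\nu)$, the within-$Q$ pairs reproduce $c_\Gamma(\eta,\zeta)$, and the cross pairs with the later index $i$ in the $Q$-block and the earlier index $j$ in the $P$-block—which automatically satisfy $j<i$—contribute $(-1)^{\delta_\Gamma(\eta)\cdot\delta_\Lambda(\nu)}$, because summing the relevant mod-$2$ coordinates yields exactly $\delta_\Gamma(\eta)$ and $\delta_\Lambda(\nu)$. The remaining cross pairs have $i<j$ and contribute nothing. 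This matches the Koszul sign precisely, proving (CK2).

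With the family in hand, the universal property gives a homomorphism $\psi: C^*(\Lambda\times\Gamma, c_{\Lambda\times\Gamma}) \to C^*(\Lambda,c_\Lambda)\hatimes C^*(\Gamma,c_\Gamma)$ with $\psi(s_{(\lambda,\gamma)}) = s_\lambda\hatimes s_\gamma$; it intertwines $\alpha_{\delta_{\Lambda\times\Gamma}}$ with $\alpha_{\delta_\Lambda}\hatimes\alpha_{\delta_\Gamma}$, immediately on generators since $\delta_{\Lambda\times\Gamma}(\lambda,\gamma)=\delta_\Lambda(\lambda)+\delta_\Gamma(\gamma)$. Surjectivity follows because $\psi(s_{(\mu,\eta)})\psi(s_{(\nu,\zeta)})^* = \pm\, s_\mu s_\nu^* \hatimes s_\eta s_\zeta^*$, and the span of such elementary tensors is dense. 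For injectivity I would apply Proposition~\ref{prp:giut}: the gauge actions $\gamma^\Lambda$ and $\gamma^\Gamma$ each commute with their grading automorphism (both act by degree-dependent scalars), hence are graded, so $\beta_{(\chi,\xi)} := \gamma^\Lambda_\chi\hatimes\gamma^\Gamma_\xi$ defines a strongly continuous action of $\widehat{G_P}\times\widehat{G_Q}=\widehat{G_{P\times Q}}$ on the target, well-defined on the minimal graded tensor product by the general theory of \cite[\S14]{B}. This action is intertwined with the gauge action on the source by $\psi$, and $\psi(s_{(v,w)}) = s_v\hatimes s_w \neq 0$ since $s_v, s_w \neq 0$, so Proposition~\ref{prp:giut} yields injectivity. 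The main obstacle is the sign bookkeeping in (CK2)—identifying the cross-block contribution of the twisting cocycle $c_{\Lambda\times\Gamma}$ with the Koszul sign of the graded multiplication, which is exactly the phenomenon the definition of $c_{\Lambda\times\Gamma}$ via~\eqref{eq:cdef} was engineered to capture; a secondary technical point is confirming that $\gamma^\Lambda_\chi\hatimes\gamma^\Gamma_\xi$ is well-defined and strongly continuous on the graded tensor product.
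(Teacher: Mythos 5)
Your proposal is correct, and it reaches the theorem by a genuinely different route in the key verification step. The paper first develops an alternative presentation of twisted $P$-graph algebras (Theorem~\ref{thm:differentgenerators}): $C^*(\Lambda, c_\Lambda)$ is universal for vertex projections and \emph{edge} partial isometries $t_\lambda$, $d(\lambda) \in E_P$, subject to relations (a)--(d), the essential one being the anticommutation rule $t_\lambda t_\mu = - t_{\mu'} t_{\lambda'}$ for edges of distinct colours. Its proof of Theorem~\ref{thm:swapping} then only checks these edge relations for the elements $s_\lambda \hatimes s_w$ and $s_v \hatimes s_\mu$, which splits into four short sign cases, before invoking the universal property of that presentation. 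You instead verify the full twisted Cuntz--Krieger relations (CK1)--(CK4) for the family $t_{(\lambda,\gamma)} = s_\lambda \hatimes s_\gamma$ indexed by \emph{all} morphisms, the crux being the cocycle identity
\[
c_{\Lambda\times\Gamma}\big((\mu,\eta),(\nu,\zeta)\big)
  = (-1)^{\delta_\Gamma(\eta)\cdot\delta_\Lambda(\nu)}\, c_\Lambda(\mu,\nu)\, c_\Gamma(\eta,\zeta),
\]
which you obtain by splitting the defining sum of $\kappa$ into the within-$P$ block, the within-$Q$ block, and the cross block; the only nonvacuous cross pairs have the later index in the $Q$-block and the earlier index in the $P$-block, and their sum factors (since multiplication distributes over mod-$2$ sums, $\sum_{i,j} m_i n_j = (\sum_i m_i)(\sum_j n_j)$ in $\ZZ_2$) into exactly $\delta_\Gamma(\eta)\cdot\delta_\Lambda(\nu)$, matching the Koszul sign of the graded multiplication. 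This computation is correct, as are your checks of (CK1), (CK3), (CK4), where the two Koszul signs cancel. The endgame --- universal property, surjectivity from density of the span of elements $s_\mu s^*_\nu \hatimes s_\eta s^*_\zeta$, and injectivity via Proposition~\ref{prp:giut} using the action $\gamma^\Lambda_\chi \hatimes \gamma^\Gamma_{\chi'}$ of $\widehat{G}_P \times \widehat{G}_Q$ --- coincides with the paper's. What each approach buys: the paper's route confines all sign bookkeeping to pairs of edges, at the price of relying on the machinery of Theorem~\ref{thm:differentgenerators}; yours is self-contained relative to the definition of twisted Cuntz--Krieger families and isolates, as a clean standalone identity, precisely how $c_{\Lambda\times\Gamma}$ was engineered to absorb the Koszul sign, at the cost of handling morphisms of arbitrary degree rather than edges.
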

\begin{proof}
By Theorem~\ref{thm:differentgenerators}, the graded $C^*$-algebra
$C^*(\Lambda \times \Gamma, c_{\Lambda \times \Gamma})$ is generated by families
$\{ p_{(v,w)}, t_{(\lambda,w)}, t_{(v,\mu)} \}$ satisfying (a)--(d) of
Theorem~\ref{thm:differentgenerators}. We define
\begin{align*}
T_{(\lambda, w)} = s_\lambda \hatimes s_w & \text{ for } \lambda \in \Lambda^{e_i} , w \in \Gamma^0 \\
T_{(v , \mu )} = s_v \hatimes s_\mu & \text{ for } \mu \in \Gamma^{e_j} , v \in \Lambda^0 \\
P_{(v,w)}= s_v \hatimes s_w & \text{ for } v \in \Lambda^0 , w \in \Gamma^0 .
\end{align*}
Then $\{P_{(v,w)} : (v,w) \in \Lambda^0 \times \Gamma^0\}$ is a family of mutually
orthogonal projections. We must check that the $P_{(v,w)}$ and the $T_{(\lambda, w)}$ and
$T_{(v, \mu)}$ satisfy relations (a)--(d) for the $(P \times Q)$-graph $\Lambda \times
\Gamma$ and the cocycle $c_{\Lambda \times \Gamma}$. Condition (a) is the most difficult,
and we present it here; (b)--(d) are routine.

Let $\bar\lambda, \bar\lambda' , \bar\mu , \bar\mu'$ be edges in $\Lambda \times \Gamma$
such that $d ( \bar\lambda ) = d ( \bar\lambda')$, $d ( \bar \mu ) = d ( \bar\mu' )$, $d
( \bar\lambda ) \neq d ( \bar\mu )$, $s(\bar\lambda) = r(\bar\mu)$, and $\bar \lambda
\bar \mu = \bar\mu' \bar\lambda'$. There are four combinations to check according to
whether
\begin{align*}
d ( \bar\lambda ) &= (e_i , 0) , 1 \le i \le k \text{ or } d ( \bar\lambda ) = ( 0, e_i ) , 1 \le i \le l \text{ and}; \\
d ( \bar\mu ) &= ( e_j , 0) , 1 \le j \le k \text{ or } d ( \bar\mu ) = ( 0, e_j ) , 1 \le j \le l .
\end{align*}

First suppose that $d ( \bar\lambda ) = ( e_i , 0 )$ and $d ( \bar\mu ) = ( e_j , 0)$
where $i \neq j$. Then $\bar\lambda = (\lambda, v)$, $\bar\mu = (\mu,v)$, $\bar\lambda' =
(\lambda', v)$ and $\bar\mu' = (\mu', v)$ for some $v \in \Gamma^0$ and some
$\lambda,\mu' \in \Lambda^{e_i}$, $\mu,\lambda' \in \Lambda^{e_j}$ with $\lambda\mu =
\mu'\lambda'$. We then have
\[
T_{\bar\lambda} T_{\bar\mu} = T_{(\lambda,v)} T_{(\mu,v)} = ( s_\lambda \hatimes s_v ) ( s_\mu \hatimes s_v )
    = (-1)^{\partial{s_v}\cdot\partial{s_\mu}} ( s_\lambda s_\mu \hatimes s_v )
    = ( s_{\lambda \mu} \hatimes s_v )
\]
since $\partial s_v = 0$. On the other hand,
\begin{align*}
T_{\bar\mu'} T_{\bar\lambda'}
    &= T_{(\mu',v)} T_{(\lambda',v)}
     = ( s_{\mu'} \hatimes s_v ) ( s_{\lambda'} \hatimes s_v ) \\
    &= (-1)^{\partial{s_v}\cdot\partial{s_{\lambda'}}} ( s_{\mu'} s_{\lambda'} \hatimes s_v )
     = s_{\mu'} s_{\lambda'} \hatimes s_v
     = -( s_{\lambda \mu} \hatimes s_v ).
\end{align*}

Now suppose that $d ( \bar\lambda ) = ( e_i , 0 )$ and $d ( \bar\mu ) = ( 0, e_j )$ where
$i \neq j$. Then $\bar\lambda = (\lambda, r(\mu))$, $\bar\mu = (s(\lambda), \mu)$,
$\bar\mu' = (r(\lambda),\mu)$ and $\bar\lambda' = (\lambda, s(\mu))$ for some $\lambda
\in \Lambda^{e_i}$ and $\mu \in \Gamma^{e_j}$. So
\begin{align*}
T_{\bar\lambda} T_{\bar\mu} = T_{(\lambda,r(\mu))} T_{(s(\lambda) ,\mu)}
    &= ( s_\lambda \hatimes s_{r(\mu)} ) ( s_{s(\lambda)} \hatimes s_\mu  )\\
    &= (-1)^{\partial{s_{r(\mu)}}\cdot\partial{s_{r(\lambda)}}} ( s_{s(\lambda)} s_\lambda  \hatimes s_\mu s_{s(\mu)} )
    = ( s_{\lambda} \hatimes s_\mu ),
\end{align*}
whereas
\begin{align*}
T_{\bar\mu'} T_{\bar\lambda'}
     = T_{(r (\lambda'), \mu')} T_{(\lambda', s (\mu'))}
    &= ( s_{r(\lambda')} \hatimes s_{\mu'} ) ( s_{\lambda'} \hatimes s_{s(\mu')} ) \\
    &= (-1)^{\partial{s_{\mu'}}\cdot\partial{s_{\lambda'}}} ( s_{r ( \lambda')}  s_{\lambda'} \hatimes s_{\mu'} s_{s(\mu')}  )
     = - ( s_{\lambda} \hatimes s_\mu ) .
\end{align*}
The remaining two cases are similar.

By the universal property of $C^* ( \Lambda \times \Gamma , c_{\Lambda \times \Gamma} )$
there is a map $\pi : C^* ( \Lambda \times \Gamma , c_{\Lambda \times \Gamma} ) \to
C^*(\Lambda, c_\Lambda) \hatimes C^*(\Gamma, c_\Gamma)$ such that $\pi ( t_{(\lambda,w)})
= T_{(\lambda,w)}$, $\pi ( t_{(v,\mu)} ) = T_{(v,\mu)}$ and $\pi ( p_{(v,w)} ) =
P_{(v,w)}$. This $\pi$ is surjective because $C^*(\Lambda, c_\Lambda) \hatimes
C^*(\Gamma, c_\Gamma)$ is generated by the elements $s_\lambda \hatimes s_\mu =
T_{(\lambda, r(\mu))} T_{(s(\lambda), \mu)}$. We aim to apply the gauge invariant
uniqueness theorem for twisted $P$-graph algebras given in Proposition~\ref{prp:giut} to
show that $\pi$ is injective. For this, observe that the projections $P_{(v,w)}= s_v
\hatimes s_w$ are nonzero, so it suffices to show that, identifying $\widehat{G}_{P
\times Q}$ with $\widehat{G}_P \times \widehat{G}_Q$ in the canonical way, $\pi$ is
equivariant for the gauge-action $\gamma^{\Lambda \times \Gamma}$ on $C^* ( \Lambda
\times \Gamma , c_{\Lambda \times \Gamma} )$ and the action $\gamma^\Lambda \hatimes
\gamma^\Gamma$ on $C^*(\Lambda, c_\Lambda) \hatimes C^*(\Gamma, c_\Gamma)$ such that
\[
(\gamma^\Lambda_\chi \hatimes \gamma^\Gamma_{\chi'}) ( s_\lambda \hatimes s_\mu )
    =  \chi(d(\lambda)) \chi'(d(\mu)) (s_\lambda \hatimes s_\mu)
\]
for all $(\chi, \chi')\in \widehat{G}_P \times \widehat{G}_Q$, $\lambda \in \Lambda$ and
$\mu \in \Gamma$.

Since $\gamma^{\Lambda \times \Gamma}_{(\chi,\chi')} s_{(\lambda,\mu)} = \chi (d(\lambda)
) \chi' (d(\mu)) s_{(\lambda,\mu)}$ we see that $\pi$ is equivariant on the generators
$p_{(v,w)}$, $t_{(\lambda,w)}$, $t_{(v,\mu)}$, and therefore on $C^* ( \Lambda \times
\Gamma , c_{\Lambda \times \Gamma} )$.
\end{proof}

An interesting special case of Theorem~\ref{thm:swapping} occurs when $\Gamma$ is the
$\ZZ_2$-graph $Z_2$, so that $C^*(\Gamma) \cong \Cliff1$ as graded $C^*$-algebras.

\begin{cor}\label{cor:tensor z2}
Let $P = \NN^k \times \ZZ_2^a$ and let $\Lambda$ be a $P$-graph. Then
\[
C^*(\Lambda \times Z_2, c_{\Lambda \times Z_2})
    \cong C^*(\Lambda, c_\Lambda) \hatimes C^*(Z_2)
    \cong C^*(\Lambda, c_\Lambda) \hatimes \Cliff1.
\]
with respect to the gradings $\alpha_{\delta_{\Lambda \times Z_2}}$ and
$\alpha_{\delta_\Lambda} \hatimes \alpha_{\delta_{Z_2}}$.
\end{cor}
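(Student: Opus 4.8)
The plan is to derive both displayed isomorphisms directly from results already in hand, so this is essentially a matter of specialising Theorem~\ref{thm:swapping} and then transporting one tensor factor. First I would apply Theorem~\ref{thm:swapping} with $Q := \ZZ_2$ and $\Gamma := Z_2$ (so that $Q = \NN^0 \times \ZZ_2^1$). Before invoking it, I would record that the cocycle $c_{Z_2}$ supplied by Proposition~\ref{prp:clambdadef} is trivial: for the group $\ZZ_2 = \ZZ_2^1$, the defining cocycle $\kappa$ of~\eqref{eq:cdef} is the empty sum $\sum_{1 \le j < i \le 1} m_i \cdot n_j = 0$, whence $c_\kappa \equiv 1$ and therefore $c_{Z_2}(\mu,\nu) = 1$ on every composable pair. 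Consequently $C^*(Z_2, c_{Z_2}) = C^*(Z_2)$ as graded $C^*$-algebras, and Theorem~\ref{thm:swapping} yields the graded isomorphism $C^*(\Lambda \times Z_2, c_{\Lambda \times Z_2}) \cong C^*(\Lambda, c_\Lambda) \hatimes C^*(Z_2)$, carrying the grading $\alpha_{\delta_{\Lambda \times Z_2}}$ on the left to $\alpha_{\delta_\Lambda} \hatimes \alpha_{\delta_{Z_2}}$ on the right.

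For the second isomorphism I would invoke Example~\ref{ex:above}(\ref{it:Z2 Cliff1}), which furnishes a graded isomorphism $(C^*(Z_2), \alpha_{\delta_{Z_2}}) \cong \Cliff1$ sending the (odd) generator $s_1$ to $(1,-1)$. Tensoring on the left by $(C^*(\Lambda, c_\Lambda), \alpha_{\delta_\Lambda})$ and using that the graded tensor product is functorial for graded $*$-isomorphisms---a graded isomorphism $\theta : B \to B'$ induces the graded isomorphism $\id \hatimes \theta : A \hatimes B \to A \hatimes B'$---I obtain $C^*(\Lambda, c_\Lambda) \hatimes C^*(Z_2) \cong C^*(\Lambda, c_\Lambda) \hatimes \Cliff1$ as graded $C^*$-algebras. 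Composing the two isomorphisms gives the stated chain.

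The only point requiring genuine care, and the closest thing to an obstacle, is the bookkeeping of gradings: one must confirm that the grading $\alpha_{\delta_{Z_2}}$ appearing on the $Z_2$-factor in Theorem~\ref{thm:swapping} is exactly the grading transported to $\Cliff1$ by Example~\ref{ex:above}(\ref{it:Z2 Cliff1}), so that the two graded tensor products really agree. Since $\delta_{Z_2}$ is the identity functor $Z_2 \to \ZZ_2$ and the isomorphism of Example~\ref{ex:above}(\ref{it:Z2 Cliff1}) sends the generator to the odd unitary $u$, the grading automorphisms match on generators, and the remaining verifications are immediate consequences of the cited results.
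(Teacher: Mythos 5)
Your proof is correct and follows essentially the same route as the paper's: specialise Theorem~\ref{thm:swapping} to $\Gamma = Z_2$, observe that the twisting cocycle on $Z_2$ is trivial, and then apply Example~\ref{ex:above}(\ref{it:Z2 Cliff1}). The only (harmless) difference is that you verify triviality of $c_{Z_2}$ by directly computing that the sum defining $\kappa$ in~\eqref{eq:cdef} is empty when $l=1$, so that $C^*(Z_2, c_{Z_2}) = C^*(Z_2)$ on the nose, whereas the paper appeals to $Z_2$ having trivial second cohomology---your version is, if anything, slightly more direct.
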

\begin{proof}
The first statement follows from Theorem~\ref{thm:swapping} because $Z_2$ has trivial
second cohomology, so $C^*(Z_2, c_{Z_2}) \cong C^*(Z_2)$. The second statement follows
from Examples~\ref{ex:above}(\ref{it:Z2 Cliff1}).
\end{proof}

\begin{rmk}
Since the graded tensor product with $\Cliff1$ is like a graded suspension operation,
Corollary~\ref{cor:tensor z2} has implications for graded $K$-theory. Let $P = \NN^k
\times \ZZ_2^l$ for some $k,l$, and let $\Lambda$ be a $P$-graph. Then
$\Kgr_i(C^*(\Lambda \times Z_2, c_{\Lambda \times Z_2})) \cong \Kgr_{i+1}(C^*(\Lambda,
c_{\Lambda}))$, and then inductively
\[
\Kgr_i(C^*(\Lambda \times Z_2^n, c_{\Lambda\times Z_2})) \cong
    \Kgr_{i+n}(C^*(\Lambda, c_{\Lambda})).
\]
\end{rmk}

\begin{cor}
Let $\Lambda$ be the $\ZZ_2^l$-graph $\prod^l_{i=1} Z_2$. Then $C^*(\Lambda, c_\Lambda)
\cong \Cliff{l}$, the $l$th complex Clifford algebra. This isomorphism is a graded
isomorphism with respect to the grading $\delta_{\Lambda}$ of $C^*(\Lambda, c_\Lambda)$.
\end{cor}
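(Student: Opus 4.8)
The plan is to argue by induction on $l$, using Corollary~\ref{cor:tensor z2} as the engine for the inductive step and Examples~\ref{ex:above}(\ref{it:Z2 Cliff1}) for the base case. Throughout, all the isomorphisms in sight are graded, and the composite of graded isomorphisms is graded, so gradedness of the final isomorphism is automatic once each ingredient is graded.

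For the base case $l = 1$, the graph $\Lambda$ is just $Z_2$. Here the cocycle $\kappa$ of~\eqref{eq:cdef} on $\ZZ_2^1$ is determined by the empty sum over $1 \le j < i \le 1$, so $c_\Lambda$ is trivial and $C^*(\Lambda, c_\Lambda) = C^*(Z_2)$. Examples~\ref{ex:above}(\ref{it:Z2 Cliff1}) then supplies a graded isomorphism $(C^*(Z_2), \alpha_{\delta_{Z_2}}) \cong \Cliff1$, which is exactly the assertion for $l = 1$.

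For the inductive step, I would assume the result for some $l \ge 1$ and write $\prod_{i=1}^{l+1} Z_2 = \Lambda' \times Z_2$, where $\Lambda' = \prod_{i=1}^{l} Z_2$ is regarded as a $\ZZ_2^l$-graph. Applying Corollary~\ref{cor:tensor z2} with $P = \ZZ_2^l$ (so $k = 0$ and $a = l$) to $\Lambda'$ gives a graded isomorphism
\[
C^*(\Lambda' \times Z_2, c_{\Lambda' \times Z_2}) \cong C^*(\Lambda', c_{\Lambda'}) \hatimes \Cliff1
\]
with respect to the gradings $\alpha_{\delta_{\Lambda' \times Z_2}}$ and $\alpha_{\delta_{\Lambda'}} \hatimes \alpha_{\delta_{Z_2}}$. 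Feeding in the inductive hypothesis $C^*(\Lambda', c_{\Lambda'}) \cong \Cliff{l}$ and the defining relation $\Cliff{l} \hatimes \Cliff1 = \Cliff{l+1}$ then yields the desired graded isomorphism $C^*(\prod_{i=1}^{l+1} Z_2, c_{\Lambda' \times Z_2}) \cong \Cliff{l+1}$, completing the induction.

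The only point that requires genuine care—and which I expect to be the main obstacle, though it is ultimately bookkeeping—is the identification of the cocycle (and grading) on the product graph with the intrinsic ones. I would check that $c_{\Lambda' \times Z_2}$, defined by Proposition~\ref{prp:clambdadef} for the $(\ZZ_2^l \times \ZZ_2)$-graph $\Lambda' \times Z_2$, coincides with $c_\Lambda$ for $\Lambda = \prod_{i=1}^{l+1} Z_2$ viewed directly as a $\ZZ_2^{l+1}$-graph. Both are given by $(-1)^{\kappa(\rho(d(\cdot)), \rho(d(\cdot)))}$ for the \emph{same} $\kappa$ of~\eqref{eq:cdef}, once the coordinates of $\ZZ_2^l \times \ZZ_2$ are matched with those of $\ZZ_2^{l+1}$ in the evident order; with this matching the two cocycles are literally equal, and the same remark applies to the functors $\delta_{\Lambda' \times Z_2}$ and $\delta_\Lambda$. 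Everything else is a direct appeal to Corollary~\ref{cor:tensor z2} and the inductive definition of the higher complex Clifford algebras.
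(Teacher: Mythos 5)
Your proposal is correct and follows essentially the same route as the paper: the base case via Examples~\ref{ex:above}(\ref{it:Z2 Cliff1}), induction on $l$ powered by Corollary~\ref{cor:tensor z2}, and the defining relation $\Cliff{l+1} = \Cliff{l} \hatimes \Cliff1$. Your extra bookkeeping step—checking that $c_{\Lambda' \times Z_2}$ and $\delta_{\Lambda' \times Z_2}$ agree with the intrinsic $c_\Lambda$ and $\delta_\Lambda$ under the evident identification $\ZZ_2^l \times \ZZ_2 \cong \ZZ_2^{l+1}$—is exactly the point the paper leaves implicit in its setup for Theorem~\ref{thm:swapping}, and your verification of it is correct.
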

\begin{proof}
We have $C^*(Z_2) \cong \Cliff1$ as graded $C^*$-algebras as discussed in
Example~\ref{ex:above}(\ref{it:Z2 Cliff1}). So the result follows from an induction
argument using Corollary~\ref{cor:tensor z2} and the definition $\Cliff{l+1} = \Cliff{l}
\hatimes \Cliff1$.
\end{proof}

So far we have discussed gradings arising from functors from $k$-graphs into $\ZZ_2$, but
there are other possible gradings including those arising from order two automorphisms of $k$-graphs.
Let $\theta$ be an order two automorphism of a row-finite $k$-graph $\Lambda$ with no sources.
Then $\theta$ induces a grading $\beta_{\theta}$ of $C^*(\Lambda)$ satisfying
$\beta_{\theta}(s_\lambda) = s_{\theta(\lambda)}$. With respect to this grading,
\begin{align*}
C^*(\Lambda)_0 &= \clsp\{s_\lambda s_\mu^* + s_{\theta(\lambda)} s^*_{\theta(\mu)}
                        : s(\lambda)= s(\mu)\},\quad\text{ and}\\
C^*(\Lambda)_1 &= \clsp\{s_\lambda s_\mu^* - s_{\theta(\lambda)} s^*_{\theta(\mu)}
                        : s(\lambda)= s(\mu)\}.
\end{align*}

\begin{prp} \label{prp:graded}
With notation as above there is a graded isomorphism
\[
\rho : C^*(\Lambda \times_\theta \ZZ_2) \to C^*(\Lambda) \hatimes \Cliff1
\]
such that $\rho(s_{(\lambda, i)}) = s_\lambda \hatimes u^i$, where $\Lambda \times_\theta
\ZZ_2$ is the crossed-product $(\NN^k \times \ZZ_2)$-graph, and $C^*(\Lambda
\times_\theta \ZZ_2)$ is graded by the automorphism $\tilde{\beta}_{\theta}$ such that
$\tilde{\beta}_{\theta}(s_{(\lambda, i)}) = (-1)^i s_{(\theta(\lambda), i)}$.
\end{prp}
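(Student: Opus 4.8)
The plan is to build $\rho$ from the universal property of $C^*(\Lambda \times_\theta \ZZ_2)$, prove it is an isomorphism using the gauge-invariant uniqueness theorem together with a surjectivity check on generators, and finally verify that it intertwines the two gradings. Concretely, I would set $T_{(\lambda,i)} := s_\lambda \hatimes u^i \in C^*(\Lambda)\hatimes\Cliff1$, where $C^*(\Lambda)$ carries the grading $\beta_\theta$ and $u$ is the odd generator of $\Cliff1$, and show that $\{T_{(\lambda,i)}\}$ is a Cuntz--Krieger $(\Lambda \times_\theta \ZZ_2)$-family for the trivial cocycle. The universal property of $C^*(\Lambda\times_\theta\ZZ_2)$ then yields a homomorphism $\rho$ with $\rho(s_{(\lambda,i)}) = s_\lambda \hatimes u^i$.

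The crux---and the main obstacle---is that $s_\lambda$ is \emph{not} homogeneous for $\beta_\theta$ (indeed $\beta_\theta(s_\lambda)=s_{\theta(\lambda)}$), so the multiplication and involution of the graded tensor product cannot be applied directly: each factor $s_\lambda$ must first be split into its even and odd parts $s_\lambda^{(0)} = \tfrac12(s_\lambda + s_{\theta(\lambda)})$ and $s_\lambda^{(1)} = \tfrac12(s_\lambda - s_{\theta(\lambda)})$. The computation hinges on the identity $s_\lambda^{(0)} + (-1)^i s_\lambda^{(1)} = \beta_\theta^{\,i}(s_\lambda) = s_{\theta^i(\lambda)}$, which is precisely the mechanism by which the Koszul sign produced when $u^i$ is commuted past $s_\lambda$ reproduces the twisted composition $(\mu,g)(\nu,h) = (\mu\theta^{g}(\nu),g+h)$ of the crossed-product graph of Lemma~\ref{lem:FPS}. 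Using this identity I would verify (CK2) in the form $(s_\mu\hatimes u^g)(s_\nu\hatimes u^h) = s_{\mu\theta^{g}(\nu)}\hatimes u^{g+h}$, and similarly (CK3) and (CK4), each reducing to the corresponding relation in $C^*(\Lambda)$ after the even/odd split and the cancellation $u^{2i}=1$.

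For injectivity I would apply the gauge-invariant uniqueness theorem (Proposition~\ref{prp:giut}). Identifying $\widehat{G_P} \cong \TT^k \times \ZZ_2$ for $P = \NN^k\times\ZZ_2$, the relevant action on $C^*(\Lambda)\hatimes\Cliff1$ is $\beta_{(z,\epsilon)} = \gamma^\Lambda_z \hatimes \mu_\epsilon$, where $\gamma^\Lambda$ is the gauge action of $\TT^k$ on $C^*(\Lambda)$ and $\mu_\epsilon$ scales $u$ by $\epsilon\in\{1,-1\}$; this is well defined since both factors are even (note $\theta$ preserves degree, so $\gamma^\Lambda_z$ commutes with $\beta_\theta$, and $\mu_\epsilon$ is diagonal in the basis $\{1,u\}$). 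A direct check on generators shows $\rho$ intertwines the gauge action~\eqref{eq:gauge} on $C^*(\Lambda\times_\theta\ZZ_2)$ with $\beta$, and since each $\rho(s_{(v,0)}) = s_v\hatimes 1$ is nonzero, Proposition~\ref{prp:giut} gives injectivity. Surjectivity is immediate: the image is a $C^*$-algebra containing $s_\lambda\hatimes 1 = \rho(s_{(\lambda,0)})$ and $s_v\hatimes u = \rho(s_{(v,1)})$, and these generate $C^*(\Lambda)\hatimes\Cliff1$.

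Finally, to see that $\rho$ is graded I would compute on generators that $\rho\circ\tilde\beta_\theta = (\beta_\theta\hatimes\alpha)\circ\rho$, where $\alpha$ is the grading of $\Cliff1$: both sides send $s_{(\lambda,i)}$ to $(-1)^i s_{\theta(\lambda)}\hatimes u^i$. Equivalently, one may simply define $\tilde\beta_\theta := \rho^{-1}\circ(\beta_\theta\hatimes\alpha)\circ\rho$, which simultaneously confirms that $\tilde\beta_\theta$ is a grading and has the stated formula. I expect the verification of (CK2)--(CK4) via the even/odd decomposition to be the only genuinely delicate part; everything else is formal once the family is in place.
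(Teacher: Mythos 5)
Your proposal is correct and follows essentially the same route as the paper's proof: exhibit $T_{(\lambda,i)} := s_\lambda \hatimes u^i$ as a Cuntz--Krieger $(\Lambda \times_\theta \ZZ_2)$-family, invoke the universal property to get $\rho$, apply the gauge-invariant uniqueness theorem (Proposition~\ref{prp:giut}) for injectivity and a generator check for surjectivity, and verify the grading intertwining on generators. The only difference is that you spell out what the paper compresses into ``direct calculation'': namely the even/odd decomposition $s_\lambda = s_\lambda^{(0)} + s_\lambda^{(1)}$ with the identity $s_\lambda^{(0)} + (-1)^i s_\lambda^{(1)} = s_{\theta^i(\lambda)}$, and the explicit gauge action $\gamma^\Lambda_z \hatimes \mu_\epsilon$ of $\widehat{G_P} \cong \TT^k \times \ZZ_2$, both of which are exactly the right details.
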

\begin{proof}
Direct calculation shows that the elements $t_{(\lambda, i)} := s_\lambda \hatimes u^i$
constitute a Cuntz--Krieger $(\Lambda \times_\theta \ZZ_2)$-family in $C^*(\Lambda)
\hatimes \Cliff1$. So the universal property of $C^*(\Lambda \times_\theta \ZZ_2)$ gives
a homomorphism $\rho: C^*(\Lambda \times_\theta \ZZ_2) \to C^*(\Lambda) \hatimes \Cliff1$
taking $s_{(\lambda,i)}$ to $t_{(\lambda, i)} = s_\lambda \hatimes u^i$. An application
of the gauge-invariant uniqueness theorem (Proposition \ref{prp:giut}) shows that $\rho$
is injective; it is surjective because its image contains the generators of $C^*(\Lambda)
\hatimes \Cliff1$. Therefore $\rho$ is an isomorphism.
Let $\alpha$ be the grading automorphism of $\Cliff1$. 
Then
\[
\rho(\tilde{\beta}_{\theta}(s_{(\lambda, i)}))
    = \rho((-1)^is_{(\theta(\lambda), i)})
    = (-1)^i s_{\theta(\lambda)} \hatimes u^i
    = (\beta_{\theta} \hatimes\alpha)(s_\lambda \hatimes u^i)
    = (\beta_{\theta} \hatimes\alpha)\rho(s_{(\lambda,i)});
\]
hence $\rho$ intertwines the grading automorphisms of the two algebras.
\end{proof}

\section{Graded \texorpdfstring{$K$}{K}-theory of graph \texorpdfstring{$C^*$}{C*}-algebras}\label{sec:graph Kth}
In this section we apply the sequence~\eqref{eq:6-termgraded} to a graph $C^*$-algebra
$C^*(E)$ graded by an automorphism $\alpha_\delta$ determined by a function $\delta : E^1
\to \ZZ_2$---see Corollary~\ref{cor:graph Kgr}.

\begin{rmk} \label{rmk:split}
Following \cite{FowlerRaeburn} (see also \cite[\S 8]{Raeburn}), given a $1$-graph $E$, we
can realise $C^*(E)$ as the Cuntz--Pimsner algebra of the graph module $X(E)$ defined as
the Hilbert-bimodule completion of $C_c(E^1)$, regarded as a
$C_0(E^0)$--$C_0(E^0)$-bimodule under the left and right actions given by $(a \cdot x
\cdot b)(e) = a(r(e)) x(e) b(s(e))$, under the inner-product $\langle x,
y\rangle_{C_0(E^0)}(v) = \sum_{s(e) = v} \overline{x(e)}y(e)$. By \cite[Proposition
4.4]{FowlerRaeburn} this left action is by compacts if $E$ is row-finite, and $X(E)$ is
full if $E$ has no sinks. The left action is injective if $E$ has no sources. Observe
that $C_0(E^0)$, is separable and nuclear. It carries
the trivial grading $\alpha_{C_0(E^0)} = \id$.  

Fix a function $\delta : E^1 \to \ZZ_2$. This $\delta$ extends uniquely to a functor
$\delta : E^* \to \ZZ_2$. There is a grading $\alpha_{X(E)}$ on $X(E)$ determined by
$\alpha_{X(E)}(1_e) = (-1)^{\delta(e)} 1_e$. It is straightforward to check that $(X,
\alpha_X)$ is a graded $C_0(E^0)$--$C_0(E^0)$-correspondence. By
\cite[Proposition~12]{FLR} (see also \cite[Example 8.13]{Raeburn}) we have $C^*(X(E))
\cong C^*(E)$. Hence \eqref{eq:6-termgraded} becomes
\begin{equation}\label{eq:exact seq for graph}
0 \to \Kgr_1(C^*(E) , \alpha ) \hookrightarrow
    \Kgr_0(C_0(E^0)) \stackrel{{\scriptstyle 1 - [X(E)]}}{\longrightarrow}
    \Kgr_0(C_0(E^0)) \twoheadrightarrow
    \Kgr_0(C^*(E) , \alpha ) \to 0
\end{equation}
since $\Kgr_1(A, \alpha_A) = \bigoplus_{v \in E^0} K_1(\CC) = 0$.
\end{rmk}

To apply \eqref{eq:exact seq for graph} to compute the graded $K$-theory of the
$C^*$-algebra associated to a $1$-graph $E$ we need to examine the central terms more
closely. We describe $\Kgr_0 ( C_0 ( E^0 ) )$ in a way which allows us to compute the map
$\hatimes_{C_0 (E^0)} ( 1 - [X(E)] )$.

Let $E$ be a row-finite $1$-graph with no sources. By definition, we have $\Kgr_0 ( C_0 (
E^0 ) ) = KK ( \CC , \CC^{E^0} )$. Let $\CC\delta_v$ be a copy $\{z\delta_v : z \in
\CC\}$ of $\CC$ as a vector space with inner-product given by $\langle z\delta_v,
z'\delta_v\rangle_{\CC^{E^0}}(u) = \delta_{v,u} \overline{z}z'$ and right action
$z\delta_v \cdot a = a(v)z\delta_v$. It carries a left action $\varphi_v$ of $\CC$ by
multiplication. The tuple $(\CC \delta_v, \varphi_v, 0, \id)$ is a Kasparov
$\CC$--$\CC^{E^0}$. The group $KK ( \CC , \CC^{E^0} )$ is generated by the Kasparov
$\CC$--$\CC^{E^0}$ modules $[\CC \delta_v] := [\CC \delta_v, \varphi_v, 0, \id]$ for $v
\in E^0$, and there is an isomorphism $\theta : \ZZ E^0 \to KK_0 (\CC, C_0(E^0))$ such
that $\theta(1_v) = [\CC \delta_v]$, where $1_v$ is the generator of $\ZZ E^0$
corresponding to $v$.

Now we describe the map $\hatimes_{C_0 (E^0)}  [X(E)]$ on $\ZZ E^0$ induced by the
isomorphism $\theta$. Let $A^\delta_E$ be the $E^0 \times E^0$ matrix defined by
\begin{equation}
  \label{eq:signedmatrix}
A^\delta_E(v,w) = \sum_{e \in vE^1w} (-1)^{\delta(e)}
\end{equation}
(the empty sum is equal to 0 by convention). If $E_j$ denotes the subgraph $(E^0,
\delta^{-1}(j), r, s)$ of $E$ for $j=0,1$, then $A^\delta_E$ is just $A_{E_0} - A_{E_1}$.

\begin{lem}\label{lem:graph K-th}
Let $E$ be a row-finite $1$-graph with no sources or sinks. Then with notation as above,
the following diagram commutes.
\[
\begin{tikzpicture}[>=stealth,xscale=1.5]
    \node (00) at (0,0) {$\ZZ E^0$};
    \node (02) at (0,2) {$\ZZ E^0$};
    \node (20) at (2,0) {$KK_0 (\CC, \CC^{E^0})$};
    \node (22) at (2,2) {$KK_0 (\CC, \CC^{E^0})$};
    \draw[->](02)-- node[left] {$\scriptstyle (A^\delta_E)^t$} (00);
    \draw[->](22)-- node[right] {$\hatimes_{C_0( E^0 )} [X(E)]$} (20);
    \draw[->](00)-- node[above] {$\theta$} (20);
    \draw[->](02)-- node[above] {$\theta$} (22);
\end{tikzpicture}
\]
\end{lem}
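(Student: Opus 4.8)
The plan is to check commutativity on the generators $1_v$ of $\ZZ E^0$; that is, to show $[\CC\delta_v] \hatimes_{C_0(E^0)} [X(E)] = \theta\big((A^\delta_E)^t 1_v\big)$ for each $v \in E^0$. Unwinding the right-hand side, the definition~\eqref{eq:signedmatrix} gives $(A^\delta_E)^t 1_v = \sum_{e \in vE^1}(-1)^{\delta(e)} 1_{s(e)}$, so $\theta\big((A^\delta_E)^t 1_v\big) = \sum_{e \in vE^1}(-1)^{\delta(e)}[\CC\delta_{s(e)}]$. Thus the whole problem reduces to computing the Kasparov product $[\CC\delta_v] \hatimes_{C_0(E^0)} [X(E)]$ and matching the signs $(-1)^{\delta(e)}$.

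First I would realise $[\CC\delta_v]$ as a class induced by a homomorphism, so as to invoke the product formula \cite[Proposition~18.7.2(b)]{B} and avoid computing a Kasparov product from scratch. Let $h_v : \CC \to C_0(E^0)$ be the homomorphism $z \mapsto z\delta_v$. The left action of $\CC$ on ${}_{h_v}C_0(E^0)$ has essential submodule $\CC\delta_v$ and acts as $0$ on its orthogonal complement, so $[{}_{h_v}C_0(E^0), h_v, 0, \id]$ differs from $[\CC\delta_v]$ by a degenerate summand and hence equals it. Since $[X(E)] = [X(E), \varphi, 0, \alpha_{X(E)}]$, \cite[Proposition~18.7.2(b)]{B} then yields
\[
[\CC\delta_v] \hatimes_{C_0(E^0)} [X(E)] = h_v^*[X(E)] = [X(E), \varphi\circ h_v, 0, \alpha_{X(E)}],
\]
where $(\varphi\circ h_v)(z) = z\,\varphi(\delta_v)$ acts by scalar multiplication followed by left multiplication by the indicator function of $v$. (This is a valid Kasparov module with $F = 0$ because the left action of $C_0(E^0)$ on $X(E)$ is by compacts, by Remark~\ref{rmk:split}.)

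Next I would pass to the essential submodule. The operator $\varphi(\delta_v)$ is an even adjointable projection onto $X_v := \clsp\{1_e : e \in vE^1\}$ that kills the complementary Hilbert submodule, so discarding another degenerate summand shows the product equals the class of the restriction to $X_v$. Row-finiteness makes $vE^1$ finite, and the identities $1_e \cdot a = a(s(e))\,1_e$ and $\langle 1_e, 1_{e'}\rangle_{C_0(E^0)} = \delta_{e,e'}\,\delta_{s(e)}$ exhibit $X_v$ as the finite graded orthogonal direct sum $\bigoplus_{e \in vE^1}\CC 1_e$, in which $\CC 1_e \cong \CC\delta_{s(e)}$ as a right Hilbert module carrying the grading operator $(-1)^{\delta(e)}\id$ inherited from $\alpha_{X(E)}(1_e) = (-1)^{\delta(e)}1_e$. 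Hence the product decomposes as $\sum_{e \in vE^1} \big[\CC\delta_{s(e)}, \varphi_{s(e)}, 0, (-1)^{\delta(e)}\id\big]$.

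Finally I would read off the signs, which is the one genuinely delicate step. For $\delta(e) = 0$ the summand is exactly $[\CC\delta_{s(e)}] = \theta(1_{s(e)})$; for $\delta(e) = 1$ the grading operator is $-\id$, and since $\CC$ is trivially graded, \eqref{eq:KasInverse} identifies this summand with $-[\CC\delta_{s(e)}]$. Summing over $e \in vE^1$ gives $\sum_{e \in vE^1}(-1)^{\delta(e)}[\CC\delta_{s(e)}] = \theta\big((A^\delta_E)^t 1_v\big)$, establishing commutativity. The point where the argument really does its work is this last sign bookkeeping: the factor $(-1)^{\delta(e)}$ appearing in $A^\delta_E$ materialises precisely because the odd edges contribute grading-reversed rank-one modules, which \eqref{eq:KasInverse} converts into negated classes; everything else is routine tracking of essential submodules and degenerate summands.
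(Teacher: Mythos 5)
Your proof is correct, but it is organized differently from the paper's. The paper first invokes Lemma~\ref{lem:grading adjointable} (proved earlier precisely for this purpose) to split the class globally by parity, $[X(E)] = [X(E)_0] - [X(E)_1]$, and then computes the two products $[\CC\delta_v] \hatimes_{C_0(E^0)} [X(E)_j]$ as in the standard ungraded graph-algebra computation, summing over the edges of each parity. You instead keep $[X(E)]$ intact, realise $[\CC\delta_v]$ as the class induced by the homomorphism $h_v$ so that \cite[Proposition~18.7.2(b)]{B} computes the product as $h_v^*[X(E)]$, cut down to the essential submodule $X_v = \bigoplus_{e \in vE^1}\CC 1_e$, and only then handle the parity of each rank-one summand via~\eqref{eq:KasInverse}. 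The two arguments rest on the same sign-conversion fact --- indeed the proof of Lemma~\ref{lem:grading adjointable} is itself an application of~\eqref{eq:KasInverse} --- so the mathematical content coincides; the difference is where the parity splitting happens (globally on the module before taking the product, versus edge-by-edge after it). Your route is more self-contained, since it makes explicit the Kasparov-product computation that the paper compresses into a single asserted equality; the paper's route is shorter because it delegates that work to a lemma it has already proved and because, after the splitting, only trivially graded summands remain to be handled.
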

\begin{proof}
It suffices to check the diagram commutes on generators $1_v$. Fix $v \in E^0$. Using
Lemma~\ref{lem:grading adjointable} at the second equality we calculate:
\begin{align*}\textstyle
\theta(1_v) \hatimes_{C_0(E^0)} [X(E)] &= [\CC \delta_v] \hatimes_{C_0(E^0)} [X(E)]
     = [ \CC \delta_v] \hatimes_{C_0 (E^0)} \big( [X(E)_0] - [X(E)_1] \big) \\
    &= \sum_{e \in vE^1w, \delta(e)=0} [ \CC \delta_w] - \sum_{f \in vE^1w , \delta(f)=1} [ \CC \delta_w]
     = \sum_{g \in vE^1w} (-1)^{\delta(g)} [ \CC \delta_w].
\end{align*}
This is precisely $\theta( (A^\delta_E)^t 1_v)$.
\end{proof}

We now use Corollary~\ref{cor:6-termgraded} and  Lemma~\ref{lem:graph K-th} to compute
the graded $K$-theory of graph $C^*$-algebras for suitable gradings.

\begin{cor}\label{cor:graph Kgr}
Let $E$ be a row-finite $1$-graph with no sources or sinks, and let $\delta : E \to
\ZZ_2$ be the functor determined by the function $\delta : E^1 \to \ZZ_2$. Let
$\alpha_\delta$ be the associated grading $\alpha_\delta(s_e) = (-1)^{\delta(e)}s_e$ of
$C^*(E)$. Then with $A^\delta_E$ as in \eqref{eq:signedmatrix}
\begin{align*}
K_0^{gr}(C^*(E),\alpha_\delta) &\cong \coker(1 - (A^\delta_E)^t : \ZZ E^0 \to \ZZ E^0 ) \text{ and } \\
\Kgr_1(C^*(E),\alpha_\delta) & \cong \ker(1 - (A^\delta_E)^t : \ZZ E^0 \to \ZZ E^0 ) .
\end{align*}
\end{cor}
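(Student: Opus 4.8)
The plan is to read off the two graded $K$-groups directly from the four-term exact sequence recorded in Remark~\ref{rmk:split}, and then to translate the endomorphism appearing in that sequence into the integer matrix $1 - (A^\delta_E)^t$ using Lemma~\ref{lem:graph K-th}. The hypotheses of the corollary (row-finite, no sources, no sinks) are precisely those needed in Remark~\ref{rmk:split} to guarantee that the graph correspondence $X(E)$ has injective left action by compacts and is full, so that the graded Pimsner sequence \eqref{eq:6-termgraded} applies and collapses to \eqref{eq:exact seq for graph}.

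First I would invoke exactness of \eqref{eq:exact seq for graph}. Exactness at the two middle terms gives
\[
\Kgr_1(C^*(E),\alpha_\delta) \cong \ker\big(\hatimes_{C_0(E^0)}(\IdKK{C_0(E^0)} - [X(E)])\big)
\]
and
\[
\Kgr_0(C^*(E),\alpha_\delta) \cong \coker\big(\hatimes_{C_0(E^0)}(\IdKK{C_0(E^0)} - [X(E)])\big),
\]
where both kernel and cokernel are computed on $\Kgr_0(C_0(E^0)) = KK(\CC, C_0(E^0))$, and where I have used $\Kgr_1(C_0(E^0)) = 0$ to cut the six-term sequence down to four terms. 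Next I would transport these along the isomorphism $\theta : \ZZ E^0 \to KK_0(\CC, C_0(E^0))$ described just before Lemma~\ref{lem:graph K-th}. Since Kasparov product with $\IdKK{C_0(E^0)}$ is the identity endomorphism, $\theta$ intertwines the identity of $\ZZ E^0$ with $\hatimes_{C_0(E^0)}\IdKK{C_0(E^0)}$; and Lemma~\ref{lem:graph K-th} shows $\theta$ intertwines $(A^\delta_E)^t$ with $\hatimes_{C_0(E^0)}[X(E)]$. Combining these, $\theta$ intertwines $1 - (A^\delta_E)^t$ with $\hatimes_{C_0(E^0)}(\IdKK{C_0(E^0)} - [X(E)])$, and an isomorphism carries kernels to kernels and cokernels to cokernels. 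This yields the two displayed formulas at once.

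There is no genuine obstacle here; the only points requiring care are bookkeeping ones. The first is to confirm that the ``$1$'' appearing in the sequence \eqref{eq:exact seq for graph} is exactly $\hatimes_{C_0(E^0)}\IdKK{C_0(E^0)}$, so that it corresponds under $\theta$ to the integer identity matrix used in $1 - (A^\delta_E)^t$; this is immediate from the universal property of the Kasparov identity morphism. The second is to note that the functor $\delta : E^* \to \ZZ_2$ determined by $\delta : E^1 \to \ZZ_2$ induces exactly the grading $\alpha_{X(E)}$ of $X(E)$ used in Remark~\ref{rmk:split}, so that the matrix extracted in Lemma~\ref{lem:graph K-th} is genuinely the signed adjacency matrix $A^\delta_E = A_{E_0} - A_{E_1}$ of \eqref{eq:signedmatrix}. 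Once these two identifications are in place, the corollary follows directly by lining up the two earlier results.
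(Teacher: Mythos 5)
Your proposal is correct and follows exactly the route the paper intends: the corollary is read off from the collapsed four-term sequence \eqref{eq:exact seq for graph} of Remark~\ref{rmk:split} (itself the specialisation of Corollary~\ref{cor:6-termgraded}, with $\Kgr_1(C_0(E^0))=0$), combined with Lemma~\ref{lem:graph K-th} to identify the middle map with $1-(A^\delta_E)^t$ under the isomorphism $\theta$. Your bookkeeping points (the identity class corresponding to the identity matrix, and the grading of $X(E)$ matching the functor $\delta$) are exactly the identifications the paper relies on implicitly.
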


Note that if $E^0$ is finite, then $\Kgr_1 (C^*(E), \alpha_\delta)$ is a free abelian
group with the same rank as $\Kgr_0(C^*(E), \alpha_\delta)$.

\begin{rmk}
Corollary~\ref{cor:graph Kgr} provides a direct parallel with \cite[Corollary~4.2.5]{PR}
(see also \cite[Example~7.2]{Raeburn}): given $\delta : E^1 \to \ZZ_2$, the graded
$K_0$-group $\Kgr_0(C^*(E), \alpha_\delta)$ is generated as an abelian group by the
classes of the vertex projections $\{p_v : v \in E^0\}$ subject only to the relations
\[
[p_v] = \big[\sum_{e \in vE^1} s_e s^*_e\big]
    = \sum_{e \in vE^1} (-1)^{\delta(e)} [s_e^* s_e]
    = \sum_{w \in E^0} (A^\delta_E)^t(v,w) [p_w]
\]
coming from Example~\ref{eg:odd pi}. This motivates, in part, our conjecture in
Section~\ref{sec:conj} below.

In particular, taking $\delta \equiv 0$, we recover the well-known formula for the
(ungraded) $K$-theory of a $1$-graph $C^*$-algebra \cite[Theorem~4.2.4]{PR}.
\end{rmk}

\begin{examples}\label{eg:exampleorama}
\begin{enumerate}
\item\label{it:KOn} As in Examples~\ref{ex:pex}, for $1 \le n < \infty$ let $B_n$ be
    the $1$-graph with one vertex and $n$ edges. Fix $\delta : B_n^1 \to \ZZ_2$, and
    let $p := |\delta^{-1}(1)|$ and $q = |\delta^{-1}(0)|$, so that $p+q = n$. Then
    $(A_{B_n}^\delta)^t$ is the $1 \times 1$ matrix $(q-p)$. Since $C^* ( B_n ) \cong
    \Oo_n$ we recover the formula for $\Kgr_*(\Oo_n)$ obtained by Haag in
    \cite[Proposition~4.11]{H1}:
    \[
    \Kgr_*(\Oo_n, \alpha_\delta)
        \cong \begin{cases}
            \big(\ZZ_{|1+p-q|} , 0 \big) & \text{ if } 1+p-q \neq 0, \\
            (\ZZ, \ZZ) & \text{ otherwise.}
        \end{cases}
    \]

\item\label{it:K2} Let $K_2$ be the $1$-graph associated to the complete directed
    graph on two vertices. Endow $K_2$ with the map $\delta' : K_2^1 \to \ZZ_2$ for
    which $A_{K_2}^{\delta'}= \big(\begin{smallmatrix}
    -1&-1\\1&-1\end{smallmatrix}\big)$. Then $\Kgr_0(C^*(K_2), \alpha_{\delta'})
    \cong \ZZ_5$. In particular, this and Example~(\ref{it:KOn}) above show that
    although $C^*(K_2) \cong \Oo_2 \cong C^* (B_2 )$, there is no graded isomorphism
    from $(C^*(K_2), \alpha_{\delta'})$ to $( C^* ( B_2) , \alpha_\delta)$ for any
    $\delta : B^1_2 \to \{0,1\}$.

\item More generally, let $\Lambda$ be a row-finite $k$-graph with no sources and fix
    $p \in \mathbb{N}^k$. Recall that the \emph{dual graph} $p \Lambda :=
    \{\lambda \in \Lambda : d(\lambda) \ge p\}$ is a $k$-graph as follows:
    $d_p(\lambda) = d(\lambda) - p$, and if we use the factorisation property in
    $\Lambda$ to write each $\lambda \in \Lambda$ as $\lambda =
    \overline{\lambda}t(\lambda) = h(\lambda)\underline{\lambda}$ with $d(t(\lambda))
    = d(h(\lambda)) = p$, then the range and source maps on $p\Lambda$ are $h$ and
    $t$ respectively, and composition in $p \Lambda$ is given by $\lambda \circ_p \mu
    = \overline{\lambda}\mu = \lambda\underline{\mu}$ whenever $t(\lambda) = h(\mu)$
    (cf.\ \cite[Proposition~3.2]{APS}). By \cite[Theorem~3.5]{APS} there is an
    isomorphism $\theta : C^*(p\Lambda) \to C^*(\Lambda)$ such that
    $s^{p\Lambda}_\lambda \mapsto s^\Lambda_\lambda (s^\Lambda_{t(\lambda)})^*$. So
    any functor $\delta_p : p \Lambda \to \mathbb{Z}_2$ induces a grading $\alpha_p$
    of $C^*(p\Lambda)$ and hence a grading $\alpha$ of $C^*(\Lambda)$. As seen in the
    preceding example, this grading typically does not arise from a functor from
    $\Lambda$ to $\ZZ_2$, but for $k = 1$, we can still apply
    Corollary~\ref{cor:graph Kgr} (to $p\Lambda$) to compute $\Kgr_*(C^*(\Lambda),
    \alpha)$.

\item\label{it:first Minfty computation} Let $F$ be the $1$-graph with vertices
    $\{v_n : n \in \ZZ\}$ and edges $\{e_n, f_n : n \in \NN\}$ where $r(e_n) = r(f_n)
    = v_n$ and $s(e_n) = s(f_n) = v_{n+1}$. Then $C^*(F)$ is Morita equivalent to the
    UHF-algebra $M_{2^\infty}$, and  so $K_*( C^* (F) ) = ( \ZZ[\frac12] , 0)$.
    Define $\delta : F^1 \to \ZZ_2$ by $\delta(e_n) = 0$ and $\delta(f_n) = 1$ for
    all $n$. Then the matrix $A^\delta_F$ is the zero matrix. Hence $1 - A^\delta_F$
    is the identity map from $\ZZ F^0$ to $\ZZ F^0$, and we obtain $\Kgr_*(C^*(F),
    \alpha_\delta) = (0,0)$ by Corollary~\ref{cor:graph Kgr}. (We can also recover
    this result by taking a direct-limit decomposition as in Example~\ref{eq:other
    Minfty computation} below.)
\end{enumerate}
\end{examples}

\begin{rmk}
Suppose that $\Lambda$ is a bipartite $P$-graph. That is, $\Lambda^0 = L \sqcup
R$ and for every edge $\lambda \in \Lambda$ either $s(\lambda) \in L$ and
$r(\lambda)\in R$, or vice versa. Then the gradings $\alpha_{\delta_\Lambda}$ of
$C^*(\Lambda)$ and $C^*(\Lambda, c_\Lambda)$ induced by the functors
$\delta_\Lambda$ of~\eqref{eq:deltaLambda} are inner because the grading automorphism
is implemented by the self-adjoint multiplier unitary $U = P_L - P_R$. Hence
\cite[14.5.2]{B} gives $\Kgr_*(C^*(\Lambda), \alpha_\Lambda) \cong
K_*(C^*(\Lambda))$.

To see why this observation is useful, observe that the skew-product of a $k$-graph
$\Lambda$ by the degree functor $\Lambda \times_d \ZZ^k$ is bipartite with $L = \Lambda^0
\times \{n \in \ZZ^k : \sum_i n_i\text{ is even}\}$ and $R = \Lambda^0 \times \{n \in
\ZZ^k : \sum_i n_i\text{ is odd}\}$. If $\Lambda$ is the $1$-graph $B_2$
from~(\ref{it:KOn}), then $B_2 \times_d \ZZ \cong F$ where $F$ as in~(\ref{it:first
Minfty computation}) above. Hence,  as graded algebras $C^*(B_2 \times_d \ZZ) \cong
C^*(F)$.

Also, let $\Lambda$ be a $k$-graph and let $\delta = \delta_\Lambda : \Lambda \to \ZZ_2$
be as in~\eqref{eq:deltaLambda}. Then the skew product graph $\Lambda \times_\delta
\ZZ_2$ is bipartite (with $L = \Lambda^0 \times \{0\}$ and $R = \Lambda^0 \times \{1\}$),
and so the grading on $C^*(\Lambda \times_\delta \ZZ_2)$ induced by $\delta_{\Lambda
\times_\delta \ZZ_2}$ is inner.
\end{rmk} \label{rmk:bipartite}

\begin{example}\label{eq:other Minfty computation}
Consider again the graph and functor of Examples~\ref{eg:exampleorama}(\ref{it:first
Minfty computation}). We have $C^*(F) = \overline{\bigcup C^*(F_n)}$ where $F_n$ is the
subgraph of $F$ with
\[
F^0_n = \{v_1, \dots, v_n\}\quad\text{ and }\quad F^1_n = \{e_1, f_1, \dots,
e_{n-1}, f_{n-1}\}.
\]
Fix $n \in \NN$. We have $C^*(F_n) \cong M_{F v_n}$ via $s_\mu s^*_\nu \mapsto
\theta_{\mu,\nu}$. Extend $\delta$ to $F$ by setting $\delta(\mu) = \sum^{|\mu|}_{i=1}
\delta(\mu_i)$, and define
\[
U = \sum_{\mu \in F v_n} (-1)^{\delta(\mu)} s_\mu s^*_\mu.
\]
Then $U$ is a self-adjoint unitary in $C^*(F_n)$ that implements the grading by
conjugation. So the grading on each $C^*(F_n)$ is inner, and therefore $\Kgr_*(C^*(F_n),
\alpha_\delta) = K_*(C^*(F_n)) = K_*(M_{F v_n}) = (\ZZ, 0)$, with generator $[s_{v_n}]$.

The inclusion map $\iota_n : C^*(F_n) \hookrightarrow C^*(F_{n+1})$ is given by $s_{\mu}
s^*_{\nu} \mapsto s_{\mu e_n} s^*_{\nu e_n} + s_{\mu f_n} s^*_{\nu f_n}$. In particular
$\iota_n(s_{v_n}) = s_{e_n} s^*_{e_n} + s_{f_n} s^*_{f_n}$. The partial isometry $V :=
s_{e_n} s^*_{f_n}$ is odd and satisfies $V^*V = s_{e_n} s^*_{e_n}$ and $V^*V = s_{f_n}
s^*_{f_n}$. So $\iota_n(s_{v_n}) = V^*V + VV^*$. By Example~\ref{eg:odd pi}, we have
$[V^*V] = -[VV^*]$ in $\Kgr_0 (C^*(F_{n+1}), \alpha_\delta)$, and it follows that
$\iota_* : \Kgr_0 (C^*(F_n), \alpha_\delta) \to \Kgr_0 (C^*(F_{n+1}), \alpha_\delta)$
sends $[s_{v_n}]$ to zero and hence is the zero map. Hence
\[
\Kgr_*(C^*(F), \alpha_\delta)
    \cong \big( \varinjlim(\Kgr_0 (C^*(F_n), \alpha_\delta), \iota_*) , 0 \big)
    \cong ( \varinjlim(\ZZ, 0) , 0 )
    = (0,0) \text{ as before.}
\]
\end{example}

We turn next to some applications of Corollary~\ref{cor:gradedPV} to the crossed products
of the $C^*$-algebra of a $1$-graph $E$. To do this, we first need to describe the map in
graded $K$-theory induced by an automorphism determined by a function from $E^1$ to
$\ZZ_2$.

\begin{lem} \label{lem:justintime}
Let $E$ be a row-finite $1$-graph with no sources and $\delta : E^1 \to \ZZ_2$ a
function. Let $\delta : E \to \ZZ_2$ be the induced functor.  The map $\alpha_*$ on
$\Kgr_*(C^*(E), \alpha_\delta)$ induced by the automorphism $\alpha_\delta$ is the
identity map.
\end{lem}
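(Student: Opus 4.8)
The plan is to deduce $\alpha_* = \id$ from the naturality of the graded six-term sequence \eqref{eq:exact seq for graph}, rather than computing directly on Kasparov modules (where flipping the grading automorphism tends to flip the sign of the Fredholm operator and the bookkeeping is delicate). Since $\alpha_\delta$ commutes with itself, it is a graded automorphism of $(C^*(E), \alpha_\delta)$ and so, as in the discussion preceding Corollary~\ref{cor:gradedPV}, induces maps $\alpha_*$ on $\Kgr_0(C^*(E), \alpha_\delta)$ and on $\Kgr_1(C^*(E), \alpha_\delta)$; it suffices to show each is the identity. First I would handle $\Kgr_0$ directly: the induced functor sends every vertex to $\delta(v) = 0$, so $\alpha_\delta$ fixes each vertex projection, and hence the canonical inclusion $i : C_0(E^0) \to C^*(E)$ (a graded homomorphism for the trivial grading on $C_0(E^0)$) satisfies $\alpha_\delta \circ i = i$. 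Functoriality of $KK(\CC, -)$ then gives $\alpha_* \circ i_* = i_*$, and because $i_*$ is the surjection at the right-hand end of \eqref{eq:exact seq for graph}, we get $\alpha_* = \id$ on $\Kgr_0$.

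The key step for $\Kgr_1$ is to exhibit $\alpha_\delta$ as the endomorphism of $\Oo_{X(E)} = C^*(E)$ induced by a \emph{morphism of graded correspondences} of the form $(\id_{C_0(E^0)}, \theta_X)$, so that the naturality clause of Theorem~\ref{thm:6-term} applies with $\theta_A = \id$. The natural candidate is $\theta_X = \alpha_{X(E)}$, the grading operator on the graph correspondence $X(E)$ of Remark~\ref{rmk:split}. Because $C_0(E^0)$ is trivially graded, I expect $\alpha_{X(E)}$ to be an even $C_0(E^0)$-bimodule map preserving the inner product, hence a correspondence automorphism, and on generators $(\id \times \alpha_{X(E)})$ should send $s_v \mapsto s_v$ and $s_e \mapsto (-1)^{\delta(e)} s_e$, which is precisely $\alpha_\delta$. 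Verifying these bimodule and inner-product identities, confirming that $(\id, \alpha_{X(E)})$ really satisfies the hypotheses of the naturality statement, and checking that the induced map on $\Oo_{X(E)}$ matches $\alpha_\delta$ under $\Oo_{X(E)} \cong C^*(E)$, is the main (though routine) obstacle; everything else is formal.

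Granting this identification, naturality of Theorem~\ref{thm:6-term} yields an endomorphism of the exact sequence \eqref{eq:exact seq for graph} whose vertical maps are the identity on the two copies of $\Kgr_0(C_0(E^0))$ (induced by $\theta_A = \id$) and $\alpha_*$ on the terms $\Kgr_0(C^*(E))$ and $\Kgr_1(C^*(E))$ (induced by $\alpha_\delta$). Commutativity of the left-hand square then gives $\partial = \partial \circ \alpha_*$, where $\partial : \Kgr_1(C^*(E), \alpha_\delta) \hookrightarrow \Kgr_0(C_0(E^0))$ is the injection at the left-hand end of \eqref{eq:exact seq for graph}; since $\partial$ is injective, $\alpha_* = \id$ on $\Kgr_1$, as required. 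The right-hand square recovers the $\Kgr_0$ statement as well, so this naturality argument in fact handles both degrees uniformly.
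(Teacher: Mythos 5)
Your proposal is correct and is essentially the paper's own proof: the paper likewise verifies that the grading operator on $X(E)$ is a bimodule map preserving the $C_0(E^0)$-valued inner product (hence a correspondence automorphism over $\id_{C_0(E^0)}$ inducing $\alpha_\delta$ on $\Oo_{X(E)} \cong C^*(E)$), then invokes the naturality clause of Theorem~\ref{thm:6-term} to get an automorphism of the sequence~\eqref{eq:exact seq for graph} that is the identity on the two middle terms, and concludes via injectivity and surjectivity at the ends. Your separate functoriality argument for $\Kgr_0$ (using $\alpha_\delta \circ i = i$ and surjectivity of $i_*$) is a small self-contained shortcut for that degree, but as you note, the naturality ladder already handles both degrees uniformly, exactly as in the paper.
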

\begin{proof}
Let $X(E)$ denote be the graph module described in Remark~\ref{rmk:split}. For $v \in
E^0, e \in E^1$, the grading operator $\alpha_\delta$ on $X(W)$ satisfies
\[
\alpha_\delta (1_v \cdot 1_e) = \delta_{v, r(e)} \alpha_\delta (1_e) =
 \delta_{v, r(e)}(-1)^{\delta (e)} 1_e = 1_v \cdot \alpha_\delta (1_e)
\]
and similarly $\alpha_\delta (1_e \cdot 1_v) = \alpha_\delta
(1_e)\cdot 1_v$. So, by linearity and continuity, $\alpha_\delta : X(E) \to X(E)$ is a
bimodule map. Moreover for $e,f \in E^1$ we have
\[
\langle \alpha_\delta (1_e), \alpha_\delta (1_f)\rangle_{C_0(E^0)}
    = (-1)^{\delta (e)+\delta(f)}  \langle 1_e, 1_f\rangle_{C_0(E^0)}
    = \begin{cases}
        1_{s(e)} &\text{ if $e = f$}\\
        0 &\text{ otherwise,}
    \end{cases}
\]
which is precisely $\langle 1_e, 1_f\rangle_{C_0(E^0)}$. So $\alpha_\delta$ is a graded
automorphism of $X(E)$.

Thus the final statement of Theorem~\ref{thm:6-term} implies that $\alpha_\delta$ induces
an automorphism of the exact sequence
\[
0 \to \Kgr_1(C^*(E) , \alpha_\delta ) \hookrightarrow
    \Kgr_0(C_0(E^0)) \stackrel{{\scriptstyle 1 - [X(E)]}}{\longrightarrow}
    \Kgr_0(C_0(E^0)) \twoheadrightarrow
    \Kgr_0(C^*(E) , \alpha_\delta ) \to 0.
\]
Since this automorphism is the identity map on the two middle terms in the sequence, we
deduce that it is the identity map on $\Kgr_*(C^*(E),\alpha_{\delta})$ as claimed.
\end{proof}

\begin{examples} \label{ex:above2}
\begin{enumerate}[(i)]
\item \label{eg:more cp by alpha} Recall Example~\ref{ex:above}(\ref{eg:cp by
    alpha}). Let $E$ be a row-finite $1$-graph with no sources. Give $C^* (E)$ the
    grading $\alpha$ induced by the functor  $\delta ( \lambda ) = | \lambda |
    \pmod{2}$.  Consider the crossed product $C^*(E) \rtimes_\alpha \ZZ$ under the
    grading $\tilde{\alpha}$ satisfying $\tilde{\alpha}(i_A(a)i_\ZZ(n)) = (-1)^n
    i_A(\alpha(a))i_\ZZ(n)$. By applying Corollary~\ref{cor:gradedPV} with $k=1$ (so
    that $\tilde{\alpha} = \beta^1$), and Lemma~\ref{lem:justintime} we obtain the
    following exact sequence.
\begin{equation} \label{eq:cp by alpha}
\parbox{0.8\textwidth}{\mbox{}\hfill\begin{tikzpicture}[yscale=0.8, >=stealth]
    \node (00) at (0,0) {$\Kgr_1(C^*(E) \rtimes_{\alpha_\delta} \ZZ, \tilde{\alpha})$};
    \node (40) at (4,0) {$\Kgr_1(C^*(E), \alpha_\delta)$};
    \node (80) at (8,0) {$\Kgr_1(C^*(E), \alpha_\delta)$};
    \node (82) at (8,2) {$\Kgr_0(C^*(E) \rtimes_{\alpha_\delta} \ZZ, \tilde{\alpha})$};
    \node (42) at (4,2) {$\Kgr_0(C^*(E), \alpha_\delta)$};
    \node (02) at (0,2) {$\Kgr_0(C^*(E), \alpha_\delta)$};
    \draw[->] (02)-- node[above] {${\scriptstyle \times 2}$} (42);
    \draw[->] (42)-- node[above] {${\scriptstyle i_*}$} (82);
    \draw[->] (82)--(80);
    \draw[->] (80)-- node[above] {${\scriptstyle \times 2}$} (40);
    \draw[->] (40)-- node[above] {${\scriptstyle i_*}$} (00);
    \draw[->] (00)--(02);
\end{tikzpicture}\hfill\mbox{}}
\end{equation}
By Example~\ref{ex:above}(\ref{eg:cp by alpha}), we have a graded isomorphism
\[
\big( C^*(E) \hatimes C^*(T_1) , \alpha_{\delta_E} \hatimes\alpha_{\delta{T_1}} \big)
\cong (C^*(E) \rtimes_\alpha \ZZ, \tilde{\alpha}).
\]
We use this to compute $\Kgr_*(C^*(E) \hatimes C^*(T_1))$:

Since $\Kgr_1(C^*(E),\alpha_\delta)$ has no torsion, multiplication by $2$ is
injective on that group, so exactness implies that the right-hand boundary map is
zero. Therefore $\Kgr_0(C^*(E) \hatimes C^*(T_1))$ is isomorphic to the cokernel of
the times-two map on $\Kgr_0(C^*(E),\alpha_\delta)$; that is
\[
\Kgr_0(C^*(E) \hatimes C^*(T_1)) \cong \Kgr_0(C^*(E),\alpha_\delta) / 2\Kgr_0(C^*(E),\alpha_\delta).
\]
Exactness of the bottom row gives
\[
i_*(\Kgr_1(C^*(E),\alpha_\delta)) \cong \Kgr_1(C^*(E),\alpha_\delta) /
2\Kgr_1(C^*(E),\alpha_\delta) ,
\]
so $K_1(C^*(E) \hatimes C^*(T_1))$ is an extension of the $2$-torsion subgroup
\[
\{a \in \Kgr_0(C^*(E),\alpha_\delta) : 2a = 0\}
\]
by $\Kgr_1(C^*(E),\alpha_\delta) / 2\Kgr_1(C^*(E),\alpha_\delta)$. In particular, if
$\Kgr_0(C^*(E),\alpha_\delta)$ has no 2-torsion, then
\[
\Kgr_1(C^*(E) \hatimes C^*(T_1)) \cong \Kgr_1(C^*(E),\alpha_\delta) / 2\Kgr_1(C^*(E),\alpha_\delta).
\]
\item Recall from Examples~\ref{ex:above}(\ref{it:eg Tk}) that $C^* (T_1 ) \hatimes
    C^* ( T_1 ) \cong C^*(T_2, c)$ where $c$ is the $2$-cocycle $c(m,n) = (-1)^{m_2
    n_1} \text{ for } (m,n) \in \NN^2$. We can compute $\Kgr_*(C^*(T_2, c) ,
    \delta_{T_2} )$, by taking $E = T_1$ in (i) above. Since $T_1=B_1$ we have
    $\Kgr_0(C^*(T_1),\delta_{T_1}) = \ZZ_2$ by Examples~\ref{eg:exampleorama}
    (\ref{it:KOn}). Then  by \eqref{eq:cp by alpha}, the times-two map on
    $\Kgr_0(C^*(T_1),\delta_{T_1})$ is the zero map, and so the exact sequence above
    for $\Kgr_*((C^*(T_2),c),\delta_{T_2})$ collapses to give $\Kgr_*(C^*(T_2,
    c),\delta_{T_2}) \cong (\ZZ_2, \ZZ_2)$. Observe that $C^*(T_2, c)$ is the
    rational rotation algebra $A_{\frac{1}{2}}$, so its ungraded $K$-theory is
    $(\ZZ^2, \ZZ^2)$ (see \cite{Elliott}).
\end{enumerate}
\end{examples}

\begin{rmk}
More generally, by Theorem~\ref{thm:swapping}, if $E$ is any row-finite $1$-graph with no
sources endowed with the grading induced by the functor $\delta(e) = 1$ for all $e \in
E^1$, then $C^*(E) \hatimes C^*(T_1) \cong C^*(E \times T_1, c_{E \times T_1})$ with the
grading induced by $\delta_{E \times T_1}$. Thus Example~\ref{ex:above2}(\ref{eg:more cp
by alpha}) computes the graded $K$-theory of this twisted $2$-graph $C^*$-algebra.
\end{rmk}

We finish with an example describing a $2$-graph $C^*$-algebra $C^*(\Lambda)$ that is
Morita equivalent to an irrational-rotation algebra, and a grading $\alpha$ on
$C^*(\Lambda)$ such that $\Kgr_*(C^*(\Lambda),\alpha) = (\ZZ_2 \times \ZZ_2, 0)$.

\begin{example}\label{rg:irrational rotations}
Consider the following $2$-coloured graph (see \cite[Example~6.5]{PRRS})
\[
\begin{tikzpicture}[>=stealth, xscale=1.5,yscale=1.2]
    \node[inner sep=1pt] (00) at (0,0) {\small$v_1$};
    \node[inner sep=1pt] (02) at (0,2) {\small$w_1$};
    \node[inner sep=1pt] (20) at (2,0) {\small$v_2$};
    \node[inner sep=1pt] (22) at (2,2) {\small$w_2$};
    \node[inner sep=1pt] (40) at (4,0) {\small$v_3$};
    \node[inner sep=1pt] (42) at (4,2) {\small$w_3$};
    \node[inner sep=1pt] (60) at (6,0) {\small$v_4$};
    \node[inner sep=1pt] (62) at (6,2) {\small$w_4$};
    \node at (7,0) {\dots};
    \node at (7,2) {\dots};
    \draw[blue, ->] (20) to node[below] {\small$1$} (00);
    \draw[blue, ->] (20) to node[anchor=north east, pos=0.79] {\small$2$} (02);
    \draw[blue, ->] (22) to node[anchor=south east, pos=0.79] {\small$2$} (00);
    \draw[blue, ->] (22) to node[above] {\small$3$} (02);
    \draw[blue, ->] (40) to node[below] {\small$2$} (20);
    \draw[blue, ->] (40) to node[anchor=north east, pos=0.79] {\small$3$} (22);
    \draw[blue, ->] (42) to node[anchor=south east, pos=0.79] {\small$3$} (20);
    \draw[blue, ->] (42) to node[above] {\small$5$} (22);
    \draw[blue, ->] (60) to node[below] {\small$3$} (40);
    \draw[blue, ->] (60) to node[anchor=north east, pos=0.79] {\small$5$} (42);
    \draw[blue, ->] (62) to node[anchor=south east, pos=0.79] {\small$5$} (40);
    \draw[blue, ->] (62) to node[above] {\small$8$} (42);
    \draw[red, dashed,->] (02) .. controls +(0.75,0.75) and +(-0.75,0.75) .. (02);
    \draw[red, dashed,->] (22) .. controls +(0.75,0.75) and +(-0.75,0.75) .. (22);
    \draw[red, dashed,->] (42) .. controls +(0.75,0.75) and +(-0.75,0.75) .. (42);
    \draw[red, dashed,->] (62) .. controls +(0.75,0.75) and +(-0.75,0.75) .. (62);
    \draw[red, dashed,->] (00) .. controls +(0.75,-0.75) and +(-0.75,-0.75) .. (00);
    \draw[red, dashed,->] (20) .. controls +(0.75,-0.75) and +(-0.75,-0.75) .. (20);
    \draw[red, dashed,->] (40) .. controls +(0.75,-0.75) and +(-0.75,-0.75) .. (40);
    \draw[red, dashed,->] (60) .. controls +(0.75,-0.75) and +(-0.75,-0.75) .. (60);
\end{tikzpicture}
\]
where the label on a blue (solid) edge indicates
the number of parallel blue edges. The pattern is that the numbers of edges are Fibonacci
numbers. The red edges are drawn dashed. Let $E = E_{\text{blue}}$ be the subgraph
consisting of blue edges. For $v,w \in E^0$ such that $w E_{\text{blue}}^1 v \neq
\emptyset$, write $v E_{\text{blue}}^1 w = S^0(v,w) \sqcup S^1(v,w)$, where $|S^0(v,w) -
S^1(v,w)|$ is $0$ if $v E_{\text{blue}}^1 w$ is even and $1$ if $v E_{\text{blue}}^1 w$
is odd.

Choose a permutation $\rho$ of $E_{\text{blue}}^1$ that preserves ranges and sources, and
cyclicly permutes the elements of each $S^j(v,w)$, for $j=0,1$. For each $v \in E^0$, let
$f_v$ be the dashed loop based at $v$. Let $\Lambda$ be the $2$-graph with the above
skeleton, and with factorisation rules given by $f_{r(e)} e = \rho(e) f_{s(e)}$ for all
$e \in E^1_{\text{blue}}$.

Since the numbers of parallel edges grow exponentially fast, $\Lambda$ has
large-permutation factorisations in the sense of \cite[Definition~5.6]{PRRS}. It is also
cofinal, and so $C^*(\Lambda)$ is simple with real-rank zero. Elliott's classification
theorem \cite{E} combined with \cite[Theorem~4.3]{PRRS} implies that $C^*(\Lambda)$ is
Morita equivalent to the irrational-rotation algebra $A_\theta$ where $\theta = \frac{1 +
\sqrt{5}}{2}$ (see \cite[Example~6.5]{PRRS}). Define $\delta : E_{\text{blue}}^{1} \to
\ZZ_2$ by $\delta(e) = k$ whenever $e \in S^k(v,w)$ for some $v,w$. This induces a
functor $\delta : E^*_{\text{blue}} \to \ZZ_2$. The matrix $A^\delta_{E_{\text{blue}}}$
defined in Corollary~\ref{cor:graph Kgr} has entries in $\{0, 1\}$, and corresponds to
the $1$-graph $F$ with skeleton
\[
\begin{tikzpicture}[>=stealth, xscale=1.5,yscale=1.2]
    \node[inner sep=1pt] (00) at (0,0) {$\scriptstyle v_1$};
    \node[inner sep=1pt] (02) at (0,2) {$\scriptstyle w_1$};
    \node[inner sep=1pt] (20) at (2,0) {$\scriptstyle v_2$};
    \node[inner sep=1pt] (22) at (2,2) {$\scriptstyle w_2$};
    \node[inner sep=1pt] (40) at (4,0) {$\scriptstyle v_3$};
    \node[inner sep=1pt] (42) at (4,2) {$\scriptstyle w_3$};
    \node[inner sep=1pt] (60) at (6,0) {$\scriptstyle v_4$};
    \node[inner sep=1pt] (62) at (6,2) {$\scriptstyle w_4$};
    \node at (7,0) {\dots};
    \node at (7,2) {\dots};
    \draw[blue, ->] (20) to (00);
    \draw[blue, ->] (22) to (02);
    \draw[blue, ->] (40) to (22);
    \draw[blue, ->] (42) to (20);
    \draw[blue, ->] (42) to (22);
    \draw[blue, ->] (60) to (40);
    \draw[blue, ->] (60) to (42);
    \draw[blue, ->] (62) to (40);
\end{tikzpicture}
\]
where the pattern of connecting edges repeats every three levels (note: there are no
parallel edges). By telescoping these three levels, and arguing as in
Example~\ref{eq:other Minfty computation}, we see that $\Kgr_0 (C^*(E),\alpha_\delta)
\cong \varinjlim\big(\ZZ^2, \big(\begin{smallmatrix} 1 & 2
\\ 0 & 1\end{smallmatrix}\big)\big)$. This matrix has determinant 1, and so
$\Kgr_*(C^*(E),\alpha_\delta) \cong (\ZZ^2, 0)$.

As in \cite{FPS}, the permutation $\rho$ of $E^1$ defining the factorisation rules
induces an automorphism $\tilde{\rho}$ of $C^*(E)$, and $C^*(\Lambda) \cong C^*(E)
\rtimes_{\tilde{\rho}} \ZZ$ by \cite[Theorem 3.4]{FPS}. There are two natural extensions
of $\delta$ to a $\ZZ_2$-valued functor on $\Lambda$: namely $\delta^0$, determined by
$\delta^0(f) = 0$ for all $f \in \Lambda^{e_2}$, and $\delta^1$, determined by
$\delta^1(f) = 1$ for all $f \in \Lambda^{e_2}$. The grading automorphisms
$\alpha_{\delta^k}$ correspond under the identification $C^*(\Lambda) \cong C^*(E)
\rtimes_{\tilde{\rho}} \ZZ$ with the automorphisms $\beta^k$ of
Corollary~\ref{cor:gradedPV}, for $k=0,1$. So we can compute the graded $K$-theory of
$C^*(\Lambda, \alpha_{\delta^k})$ by applying that result. The automorphism
$\tilde{\rho}$ of $C^*(E)$ permutes equivalent projections in approximating
finite-dimensional subalgebras of $C^*(E)$. So the automorphism $\tilde\rho_*$ of
$\Kgr_*(C^*(E))$ induced by $\alpha_\delta \circ \tilde{\rho}$ is the identity. By
Lemma~\ref{lem:justintime} we therefore have $(-(\alpha_{\delta})_*)^k\tilde{\rho}_* =
(-1)^k\id$ for $k = 0,1$.
Thus $\id - (-\alpha_*)^0\tilde{\rho}_* = 0$ and Corollary~\ref{cor:gradedPV} gives
$\Kgr_*(C^*(\Lambda), \alpha_{\delta^0}) \cong \big(\ZZ^2, \ZZ^2)$ (which is isomorphic
to $K_*(C^*(\Lambda))$ as a pair of abelian groups). And $\id -
(-(\alpha_\delta)_*)^1\tilde{\rho}_* = 2\cdot\id$, so Corollary~\ref{cor:gradedPV} gives
$\Kgr_*(C^*(\Lambda), \alpha_{\delta^1}) = (\ZZ_2^2, 0)$.
\end{example}

\section{Conjecture}\label{sec:conj}
Our results and examples, particularly Example~\ref{eg:odd pi}, lead us to ask whether
the graded $K_0$-group of a unital graded $C^*$-algebra $A$ consists of equivalence
classes of homogeneous projections  over $A$ subject to the relation
\[
[v^*v] = (-1)^{\partial v} [vv^*]\quad\text{ for every homogeneous partial isometry $v$.}
\]

To make this concrete, let $(A, \alpha)$ be a unital graded $C^*$-algebra. Let $P_0(A)$
denote the collection of homogeneous projections in $\Kk(\widehat{\Hh}_A)$. Let $p, q \in
P_0(A)$; we write $p \sim q$ if there is an \emph{even} partial isometry $v$ such that $p
= v^*v$ and $q = vv^*$.  If $p \perp q$, then $p + q$ is a projection and we write $[p] +
[q] = [p + q]$. Define $V_0(A) := P_0(A)/{\sim}$, which is an abelian monoid under the
binary operation induced by orthogonal addition. Given a homotopy $t \mapsto p_t$ in
$P_0(A)$, we have $[p_0] = [p_1]$ (see \cite[2.2.7]{RLL} or \cite[\S4]{B}).

Note that $V_0(A)$ may be identified with the set of isomorphism classes of graded projective
modules over $A$. Indeed, given $p  \in P_0(A)$ we may form the graded projective module
$p\widehat{\Hh}_A$ (with grading inherited from $\widehat{\Hh}_A$).
Given $p, q \in P_0(A)$. We have $p \sim q$  if and only
if $p\widehat{\Hh}_A \cong q\widehat{\Hh}_A$. Moreover, if $p \perp q$, then
$(p+q)\widehat{\Hh}_A \cong  p\widehat{\Hh}_A \oplus q\widehat{\Hh}_A$. By the
Stabilization Theorem  (see \cite[Theorem 14.6.1]{B}) every graded projective module is
isomorphic to a summand of $\widehat{\Hh}_A$ and therefore is isomorphic to
$p\widehat{\Hh}_A$ for some $p \in P_0(A)$. Thus we may and do regard $V_0(A)$ as the
semigroup of isomorphism classes of graded projective modules over $A$ with the binary
operation given by direct sum, that is, $[X] + [Y] = [X \oplus Y]$ where $X$ and $Y$ are
graded projective modules.

A graded projective module $Z$  is said to be \emph{degenerate} if there is a graded
projective module $X$ such that $Z \cong X \oplus X^\text{op}$. For $p \in P_0(A)$, the
graded projective  module $p\widehat{\Hh}_A$ is degenerate if and only if there is an odd
partial isometry $v$ such that $p =  v^*v + vv^*$. Let $D_0(A)$ denote the collection of
isomorphism classes of degenerate graded projective modules in $P_0(A)$. Observe that
$D_0(A)$ forms a submonoid of $V_0(A)$.

Let $X, Y$ be graded projective modules; we write $X \approx Y$ if there are degenerate
graded projective modules $Z, W$ such that $X \oplus Z \cong Y \oplus W$. Then $\approx$
forms an equivalence relation on graded projective modules (coarser than isomorphism) and
we let $L(A, \alpha)$  denote the collection of equivalence classes. We write $[X]_L$ for
the equivalence class of the graded projective module $X$. It is routine to show that the
direct sum of graded projective modules yields a well-defined binary operation on $L(A,
\alpha)$ which makes it an abelian semigroup. Recall that $\ell : \CC \to \Ll(X)$ denoted
the left action of $\CC$ by scalar multiplication on $X$.

\begin{prp}\label{prp:L}
The semigroup $L(A, \alpha)$ forms an abelian group with inverse given by $-[X]_L = [X^\text{op}]_L$
for $X$ a graded projective module and zero element given by the class of the trivial module
(or any degenerate module).
There is a group homomorphism $\varpi: L(A, \alpha) \to \Kgr_0(A, \alpha)$ such that
\[
\varpi([X]_L) = [\ell, X, 0, \alpha_{X}]
\in KK(\CC, A) = \Kgr_0(A, \alpha)
\]
for every graded projective module $X$.
\end{prp}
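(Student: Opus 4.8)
The plan is to treat the group structure and the homomorphism $\varpi$ separately, since the abelian semigroup structure on $L(A,\alpha)$ is already in hand. For the group structure I would first record the two observations that do all the work. First, if $Z$ is any degenerate graded projective module and $X$ is arbitrary, then $X \oplus Z \approx X$: indeed $(X \oplus Z) \oplus \{0\} \cong X \oplus Z$ with $\{0\}$ and $Z$ both degenerate, so $[Z]_L$ is an identity element; in particular the class of the trivial module (which is degenerate, being $\{0\} \oplus \{0\}^{\text{op}}$) is the zero of $L(A,\alpha)$. Second, $X \oplus X^{\text{op}}$ is degenerate by the very definition of degeneracy, so $[X]_L + [X^{\text{op}}]_L = [X \oplus X^{\text{op}}]_L = 0$, which shows every element has an inverse, namely $-[X]_L = [X^{\text{op}}]_L$. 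This yields the claimed abelian group.

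For $\varpi$, I would begin by checking that $(\ell, X, 0, \alpha_X)$ really is a Kasparov $\CC$--$A$-module for each graded projective $X$; this is exactly the argument in Example~\ref{eg:odd pi}, using that $X$ projective forces $\Kk(X) = \Ll(X)$ and hence $1_X \in \Kk(X)$, so that $F = 0$ satisfies $(F^2 - 1)\ell(z) = -z\,1_X \in \Kk(X)$ while the remaining conditions are trivial. A graded isomorphism of modules is a degree-zero unitary intertwining the two actions and the zero operators, hence a unitary equivalence of Kasparov modules, so $[\ell, X, 0, \alpha_X]$ depends only on the isomorphism class of $X$. Additivity under direct sums is built into the definition of addition in $KK(\CC, A) = \Kgr_0(A,\alpha)$ (Definition~\ref{dfn:kgr}).

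The crux is to show that $\varpi$ descends to $\approx$-classes, and this hinges on identifying the class of $X^{\text{op}}$ with $-[X]$ in $KK(\CC, A)$. Since $\CC$ is trivially graded, $\alpha_\CC = \id$, so the formula~\eqref{eq:KasInverse} for the additive inverse specialises to $-[\ell, X, 0, \alpha_X] = [\ell, X, 0, -\alpha_X] = [\ell, X^{\text{op}}, 0, \alpha_{X^{\text{op}}}]$. Combined with additivity, this immediately gives $\varpi([Z]_L) = [\ell, X, 0, \alpha_X] + [\ell, X^{\text{op}}, 0, \alpha_{X^{\text{op}}}] = 0$ for every degenerate $Z \cong X \oplus X^{\text{op}}$. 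Well-definedness then follows formally: if $X \oplus Z \cong Y \oplus W$ with $Z, W$ degenerate, applying $\varpi$ and cancelling the vanishing degenerate terms gives $[\ell, X, 0, \alpha_X] = [\ell, Y, 0, \alpha_Y]$. The homomorphism property is once again just the additivity of direct sums.

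I do not expect a serious obstacle: every step is formal given Definition~\ref{dfn:kgr} and~\eqref{eq:KasInverse}. The one point deserving care --- and the pivot on which the whole argument turns --- is the identification of $X^{\text{op}}$ with the $KK$-inverse of $X$; once that is in place, the vanishing of degenerate classes, the existence of inverses in $L(A,\alpha)$, and the well-definedness of $\varpi$ all follow uniformly. It is worth double-checking that the grading operator on $X^{\text{op}}$ is precisely $-\alpha_X$, as fixed in the background section, so that the specialisation of~\eqref{eq:KasInverse} matches on the nose.
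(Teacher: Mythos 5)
Your proof is correct, and its skeleton --- inverses from $[X]_L + [X^\text{op}]_L = [X \oplus X^\text{op}]_L = [0]_L$, and well-definedness of $\varpi$ from additivity together with the vanishing of degenerate classes --- coincides with the paper's. The genuine difference is how that vanishing is proved. The paper invokes Example~\ref{eg:odd pi}: a degenerate module is of the form $p\widehat{\Hh}_A$ with $p = v^*v + vv^*$ for an odd partial isometry $v$ (the characterisation recorded just before the proposition), and the class of such a module is killed by the operator homotopy from $0$ to $\big(\begin{smallmatrix} 0 & v^*\\ v & 0\end{smallmatrix}\big)$, whose endpoint is a degenerate Kasparov module. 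You bypass both the homotopy and the partial-isometry characterisation: since $\CC$ is trivially graded and $\alpha_{X^\text{op}} = -\alpha_X$, the inverse formula~\eqref{eq:KasInverse} gives $\varpi([X^\text{op}]) = [\ell, X, 0, -\alpha_X] = -[\ell, X, 0, \alpha_X]$, so degenerate classes cancel on the nose. Both mechanisms are available in the paper --- indeed the paper itself uses \eqref{eq:KasInverse} in exactly this way, for a trivially graded source algebra, in the proof of Lemma~\ref{lem:grading adjointable} --- and yours is shorter and purely formal, working straight from the definition of degeneracy; the paper's route through Example~\ref{eg:odd pi} instead keeps the proposition anchored in the concrete relation $[v^*v] = (-1)^{\partial v}[vv^*]$ for homogeneous partial isometries that motivates the conjecture this proposition supports. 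The point you flag as needing care --- that $\alpha_{X^\text{op}}$ is precisely $-\alpha_X$, so that \eqref{eq:KasInverse} specialises correctly --- is indeed the crux, and it holds by the convention fixed in the background section.
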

\begin{proof}
Let $X$ be  a graded projective module.  Then $[X]_L + [X^\text{op}]_L = [X \oplus
X^\text{op}]_L = [0]_L$. The map $\varpi$ is well defined since Example~\ref{eg:odd pi}
shows that the Kasparov element associated to a degenerate graded projective module maps
to zero and  $\varpi$ is clearly additive,
\end{proof}

\begin{conj}
The homomorphism $\varpi$ of Proposition~\ref{prp:L} is an isomorphism.
\end{conj}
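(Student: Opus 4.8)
The plan is to construct a graded index map $\operatorname{ind}_L\colon\Kgr_0(A,\alpha)\to L(A,\alpha)$, show it is a two-sided inverse to $\varpi$, and thereby reduce the statement to the homotopy-invariance of this index; I assume $A$ separable so that the standard-form and operator-homotopy machinery for $KK(\CC,A)$ in \cite[\S17--18]{B} applies. First I would put every class of $KK(\CC,A)=\Kgr_0(A,\alpha)$ into the form $(\widehat{\Hh}_A,\ell,F,\alpha)$ with $F=F^*$ odd and $F^2-1\in\Kk(\widehat{\Hh}_A)$: adding a degenerate module and applying Kasparov stabilisation (\cite[Theorem~14.6.1]{B}) makes the underlying module $\widehat{\Hh}_A$, while replacing $F$ by $\tfrac12(F+F^*)$ is a compact perturbation, hence an operator homotopy. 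Since $F$ is odd and self-adjoint, $F=\bigl(\begin{smallmatrix}0&T^*\\ T&0\end{smallmatrix}\bigr)$ for a map $T\colon(\widehat{\Hh}_A)_0\to(\widehat{\Hh}_A)_1$ with $T^*T-1,\,TT^*-1\in\Kk$, i.e.\ a Fredholm operator of Hilbert $A$-modules in the sense of Mishchenko--Fomenko. Choosing an \emph{even} finitely generated projective module $N$ and an adjointable $s\colon N\to(\widehat{\Hh}_A)_1$ making $\widetilde T:=(T,s)\colon(\widehat{\Hh}_A)_0\oplus N\to(\widehat{\Hh}_A)_1$ surjective with complemented, finitely generated projective kernel, I set
\[
\operatorname{ind}_L[\widehat{\Hh}_A,\ell,F,\alpha]:=[\ker\widetilde T]_L-[N]_L=\bigl[\ker\widetilde T\oplus N^{\text{op}}\bigr]_L,
\]
where $\ker\widetilde T$ is even and $N^{\text{op}}$ is the odd copy of $N$, so $-[N]_L=[N^{\text{op}}]_L$ by Proposition~\ref{prp:L}.

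The heart of the argument is to show $\operatorname{ind}_L$ is well defined on homotopy classes. Independence of $(N,s)$ is the usual cofinality argument (compare two choices via their direct sum); additivity is clear; and a Kasparov-degenerate module ($T$ invertible) has $\ker\widetilde T\cong N$, hence index $0$. The crucial step is invariance under an operator homotopy $t\mapsto F_t$, i.e.\ a norm-continuous path $T_t$ of Fredholm operators: covering $[0,1]$ by finitely many subintervals on each of which a single $(N,s)$ renders $\widetilde T_t$ surjective with $\ker\widetilde T_t$ of locally constant isomorphism class shows $\operatorname{ind}_L$ is locally constant, and cofinality reconciles the overlaps. The genuinely graded point is the bookkeeping where $\ker T_t$ jumps: such a jump is an even finitely generated projective module $H$, and constancy of the ordinary index in $K_0(A)$ forces a simultaneous jump $H'$ in $\ker T_t^*$ with $[H]=[H']$ in $K_0(A)$. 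Because $L(A,\alpha)$ is a group (Proposition~\ref{prp:L}) built with stabilisation and the relation $X\oplus X^{\text{op}}\approx0$, the equality $[H]=[H']$ in $K_0(A)$ lifts to $[H]_L=[H']_L$ for the even copies, so the net change $[H]_L+[(H')^{\text{op}}]_L$ vanishes in $L(A,\alpha)$: the degeneracy relation defining $L(A,\alpha)$ is exactly what absorbs the kernel jumps. Using that homotopy of Kasparov modules over a separable algebra is generated by unitary equivalence, addition of degenerate modules, and operator homotopy \cite[\S17--18]{B}, this yields a homomorphism $\operatorname{ind}_L\colon\Kgr_0(A,\alpha)\to L(A,\alpha)$.

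The two inverse relations then follow. For $\operatorname{ind}_L\circ\varpi=\id$, I evaluate $\operatorname{ind}_L$ on the representative $(\ell,X,0,\alpha_X)$ of $\varpi([X]_L)$ for a graded projective $X=X_0\oplus X_1$: here $T=0$, so taking $N$ an even copy of $X_1$ and $s$ an isomorphism onto $X_1$ gives $\ker\widetilde T=X_0$ and $\operatorname{ind}_L=[X_0]_L-[N]_L=[X_0]_L+[X_1]_L=[X]_L$, which already shows $\varpi$ injective. For surjectivity, starting from the standard representative and adding the degenerate module $N\oplus N^{\text{op}}$, the Mishchenko--Fomenko decomposition together with the standard rotation homotopy turns the Fredholm operator into a compact perturbation of the direct sum of a symmetry (a Kasparov-degenerate summand, which vanishes in $KK$) with the zero operator on $\ker\widetilde T\oplus N^{\text{op}}$; hence the class equals $[\ker\widetilde T\oplus N^{\text{op}},\ell,0,\alpha]=\varpi(\operatorname{ind}_L[\widehat{\Hh}_A,\ell,F,\alpha])$, giving surjectivity and $\varpi\circ\operatorname{ind}_L=\id$.

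I expect the main obstacle to be the well-definedness step: a fully graded development of Mishchenko--Fomenko index theory that proves homotopy invariance while tracking the even/odd degrees of kernel and cokernel, and that verifies the degeneracy relation of $L(A,\alpha)$ precisely compensates the kernel jumps. A secondary technical point is to confirm that the generation of $KK$-homotopy by operator homotopies, degenerate modules and unitary equivalences carries over to this graded self-adjoint-Fredholm picture; both ingredients are standard in spirit but must be assembled with the grading kept explicit.
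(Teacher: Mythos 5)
The first thing to note is that the paper does \emph{not} prove this statement: it is posed as a conjecture, and the authors only remark that the trivially graded case should follow from the classical identification of $KK(\CC,A)$ with $K_0(A)$ (\cite[Proposition~17.5.5]{B}). So there is no proof to compare yours against; the question is whether your argument settles the conjecture, and it does not. The decisive gap is at the very first step of your index construction. When the grading of $A$ is nontrivial, the graded components $(\widehat{\Hh}_A)_0$ and $(\widehat{\Hh}_A)_1$ are \emph{not} Hilbert $A$-submodules: since $X_iA_j\subseteq X_{i+j}$, they are only modules over the even part $A_0$ (the paper's background section says this explicitly -- ``the graded components $X_i$ need not be Hilbert submodules''). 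Consequently, writing an odd self-adjoint $F$ as $\bigl(\begin{smallmatrix}0&T^*\\ T&0\end{smallmatrix}\bigr)$ produces an operator $T\colon(\widehat{\Hh}_A)_0\to(\widehat{\Hh}_A)_1$ that is merely $A_0$-linear. It is not a Mishchenko--Fomenko Fredholm operator between Hilbert $A$-modules; there is no adjointable $A$-linear $s\colon N\to(\widehat{\Hh}_A)_1$ from a projective $A$-module $N$; $\ker\widetilde T$ is not an $A$-module, so $[\ker\widetilde T]_L$ is undefined; and the ``ordinary index in $K_0(A)$'' that your kernel-jump cancellation appeals to does not exist. Every step of your construction typechecks only when $A_1=0$, i.e.\ in the trivially graded case -- exactly the case the authors dismiss as not difficult. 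Your lifting argument ``$[H]=[H']$ in $K_0(A)$ implies $[H]_L=[H']_L$'' likewise uses that the modules involved are purely even, which again presupposes a trivially graded situation.

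Moreover, no repair of this strategy can succeed, because the conjecture as stated appears to be false, with $A=\Cliff1$ a counterexample. A graded finitely generated projective $\Cliff1$-module is the same thing as a finitely generated module over $\Cliff1\rtimes\ZZ_2\cong M_2(\CC)$, so every graded projective $\Cliff1$-module is graded-isomorphic to a free module $\Cliff1^{\,n}$; and $\Cliff1^{\operatorname{op}}\cong\Cliff1$ as graded modules, via multiplication by the odd self-adjoint unitary $u=(1,-1)$. Hence the degenerate modules are precisely the free modules of even rank, rank parity is preserved by graded isomorphism and by adding degenerates, and therefore $L(\Cliff1,\alpha)\cong\ZZ_2$, generated by $[\Cliff1]_L\neq 0$. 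On the other hand the paper itself computes $\Kgr_0(\Cliff1)=KK(\CC,\Cliff1)=K_1(\CC)=0$; concretely, $\varpi([\Cliff1]_L)=[\Cliff1,\ell,0,\alpha]$ vanishes because the straight-line operator homotopy $t\mapsto t\,L_u$ (where $L_u\in\Kk(\Cliff1)$ is left multiplication by $u$, an odd self-adjoint unitary) connects it to a degenerate Kasparov module. So $\varpi$ is not injective for $\Cliff1$. The moral is that the relation built into $L(A,\alpha)$ -- declaring only modules of the form $Y\oplus Y^{\operatorname{op}}$ trivial -- is strictly weaker than the relation implicit in $KK$, where any graded projective module carrying an odd self-adjoint unitary in $\Kk(X)$ (such as $\Cliff1$ itself, which is \emph{not} of the form $Y\oplus Y^{\operatorname{op}}$) represents the zero class. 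Any viable index map of the kind you propose, and any correct version of the conjecture, must be formulated with this larger class of ``degenerate'' modules; as it stands, your argument would have to prove a false statement, so it necessarily breaks down at the well-definedness step you yourself flag as the main obstacle.
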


It should not be difficult to show that our conjecture holds when $A$ is trivially
graded. In this case $L(A, \text{id}) \cong K_0(A)$ since graded projective modules over
$A$ are all of the form $X \cong Y \oplus Z^\text{op}$ where $Y$ and $Z$ are trivially
graded projective modules over $A$ and $\alpha_X \cong (\text{id}, - \text{id})$.
Moreover, $[Y_1 \oplus Z_1^\text{op}]_L = [Y_2 \oplus Z_2^\text{op}]_L$ if and only if
$([Y_1], [Z_1]) \sim ([Y_2], [Z_2])$ in the Grothendieck group  $K_0(A)$. This is is
closely related to the argument that $KK_0(\CC, A) \cong K_0(A)$ for ungraded $A$---see
\cite[Proposition~17.5.5]{B}.

\end{document}